\crefname{lemma}{Lemma}{Lemmata}
\crefname{subsection}{Subsection}{Subsections}
\let\olditemize=\itemize
\let\endolditemize=\enditemize
\renewenvironment{itemize}{\olditemize \itemsep0em}{\endolditemize}
\newtheoremstyle{aussagen}
{}                
{}                
{\slshape}        
{}                
{\bfseries}       
{}                
{0.5em}           
{\thmname{#1}\thmnumber{ #2}. \thmnote{(#3)}}  
\newtheorem{theorem}{Theorem}[section]
\newtheorem{lemma}[theorem]{Lemma}
\newtheorem{definition}[theorem]{Definition}
\newtheorem{proposition}[theorem]{Proposition}
\newtheorem{remark}[theorem]{Remark}
\theoremstyle{definition}
\newtheorem{example}{Example}
\definecolor{darkblue}{RGB}{40,40,120}
\definecolor{darkred}{RGB}{220,20,20}
\definecolor{jgurot}{RGB}{193,0,42}
\definecolor{jgublue}{RGB}{0,138,193}
\colorlet{red}{jgurot}
\colorlet{blue}{jgublue}
\let\pgfimageWithoutPath\pgfimage 
\renewcommand{\pgfimage}[2][]{\pgfimageWithoutPath[#1]{bildchen/#2}}
\def\todos{1}
\newcommand{\todo}[1]{\textcolor{Maroon}{{\bf [todo:} #1]}}
\newcommand{\todo}[1]{}
\def\comments{1}
  \newcommand{\cmnt}[1]{\textcolor{Green}{{[Comment: #1]}}}
  \newcommand{\cmnt}[1]{}
\def\mycomments{0}
  \newcommand{\mycmnt}[1]{\textcolor{TealBlue}{{[My Comment: #1]}}}
  \newcommand{\mycmnt}[1]{}
\newcommand{\sss}{\scriptscriptstyle}
\DeclarePairedDelimiterXPP\Prob[1]{\mathbb{P}}{(}{)}{}{
  \providecommand\given{\nonscript\:\delimsize\vert\nonscript\:\mathopen{}}
  #1}
\DeclarePairedDelimiterXPP\Exp[1]{\mathbb{E}}{[}{]}{}{
  \providecommand\given{\nonscript\:\delimsize\vert\nonscript\:\mathopen{}}
  #1}
\DeclarePairedDelimiterXPP\Var[1]{\mathrm{Var}}{[}{]}{}{
  \providecommand\given{\nonscript\:\delimsize\vert\nonscript\:\mathopen{}}
  #1}
\DeclarePairedDelimiterXPP\Cov[1]{\mathrm{Cov}}{[}{]}{}{
  \providecommand\given{\nonscript\:\delimsize\vert\nonscript\:\mathopen{}}
  #1}
\let\deg\relax
\DeclarePairedDelimiterXPP\deg[2]{\mathrm{deg}_{#1}}{(}{)}{}{#2}
\DeclarePairedDelimiterXPP\degpl[2]{\mathrm{deg}^+_{#1}}{(}{)}{}{#2}
\DeclarePairedDelimiterX\skp[2]{\langle}{\rangle}{#1, #2}
\DeclarePairedDelimiter\abs{\lvert}{\rvert}
\DeclarePairedDelimiter\floor{\lfloor}{\rfloor}
\DeclarePairedDelimiterX\Set[1]{\{}{\}}{
  \providecommand\given{\nonscript\:\delimsize\vert\nonscript\:\mathopen{}}
  #1}
\let\originalleft\left
\let\originalright\right
\renewcommand{\left}{\mathopen{}\mathclose\bgroup\originalleft}
\renewcommand{\right}{\aftergroup\egroup\originalright}
\newcommand{\R}{\mathbb{R}}
\newcommand{\N}{\mathbb{N}}
\newcommand{\1}{\mathbbm{1}}
\newcommand{\norm}[2]{\|#1\|_{#2}}
\let\ds\displaystyle
\renewcommand{\nsim}{\mathrel{\mathpalette\n@sim\relax}}
\newcommand{\n@sim}[2]{%
  \ooalign{%
    $\m@th#1\sim$\cr
    \hidewidth$\m@th#1\rotatebox[origin=c]{60}{$#1-$}$\hidewidth\cr
  }%
}
\newcommand*\rel@kern[1]{\kern#1\dimexpr\macc@kerna}
\renewcommand*\widebar[1]{%
  \begingroup
  \def\mathaccent##1##2{%
    \rel@kern{2.2}%
    \overline{\rel@kern{-1.8}\macc@nucleus\rel@kern{0.1}}%
    \rel@kern{-0.4}%
  }%
  \macc@depth\@ne
  \let\math@bgroup\@empty \let\math@egroup\macc@set@skewchar
  \mathsurround\z@ \frozen@everymath{\mathgroup\macc@group\relax}%
  \macc@set@skewchar\relax
  \let\mathaccentV\macc@nested@a
  \macc@nested@a\relax111{#1}%
  \endgroup
}
\renewcommand*\env@matrix[1][*\c@MaxMatrixCols c]{%
  \hskip -\arraycolsep
  \let\@ifnextchar\new@ifnextchar
  \array{#1}}
\let\epsilon\varepsilon
\newcommand\addtag{\refstepcounter{equation}\tag{\theequation}}
\begin{document}

\title{Multi-drawing P\'olya urns via labelled random DAGs}
\author{Cécile Mailler \& Rebecca Steiner}
\date{August 28, 2025}
\maketitle

\begin{abstract}
  A P\'olya urn of replacement matrix $R=(R_{i,j})_{1\leq i,j\leq d}$ is a Markov process that encodes the following experiment: an urn contains balls of $d$ different colours and at every time-step, a ball is drawn uniformly at random in the urn, and if its colour is $i$, then it is replaced in the urn with an additional $R_{i,j}$ balls of colour $j$, for all $1\leq i, j\leq d$.
  We study a natural extension of this model in which, instead of drawing one ball at each time-step, we draw a set of $m\geq 2$ balls: in this case, the replacement matrix becomes a replacement \emph{tensor}. 
  Because of the multi-draws, this process can no longer be seen as a branching process, which makes its analysis much more intricate than in the classical P\'olya urn case. Partial results proved by stochastic approximation techniques exist in the literature.
  In this article, we introduce a new approach based on seeing the process as a stochastic process indexed by a random directed-acyclic graph (DAG) and use this approach, together with the theory of \emph{stochastic tensors}, to prove a convergence theorem for these multi-drawing P\'olya urns, with assumptions that are straightforward to check in practice.
\end{abstract}

\section{Introduction}
A $d$-colour P\'olya urn of replacement matrix $\mathfrak r = (\mathfrak r_{i,j})_{1\leq i,j\leq d}$ is a stochastic process that describes the contents of an urn containing balls of $d$ different colours. 
At every time step, we pick a ball uniformly at random, and if its colour is~$i$, then we replace it in the urn together with an additional $\mathfrak r_{i,j}$ balls of colour $j$, for all $1\leq j\leq d$.
The case when the replacement matrix is the identity (or an integer multiple of it) dates back to Markov~\cite{Markov} as well as P\'olya \& Eggenberger~\cite{EP23}. In this case, if $U(n) = (U_1(n), \ldots, U_d(n))$, where $U_i(n)$ is the number of balls of colour $i$ in the urn at time $n$, then $U(n)/n$ converges almost surely to a random variable whose distribution depends on $U(0)$.
A very different behaviour is observed when the replacement matrix $\mathfrak r$ is irreducible: in this case, by Perron-Frobenius theorem, the spectral radius $\lambda$ of $\mathfrak r$ is a simple eigenvalue of $\mathfrak r$, and it admits a unique eigenvector $v$ whose coordinates are all non-negative and such that $\|v\|_1 = 1$. 
Athreya \& Karlin~\cite{AK68} proved that, in this case, almost surely as $n\uparrow\infty$, $U(n)/n \to \lambda v$. 
Contrary to the ``identity case'', the limit here is deterministic and does not depend on $U(0)$.
Since Athreya \& Karlin's work, the model has been largely studied and many generalisations have been considered: for example, Janson \cite{Jans04} generalises the convergence result of Athreya and Karlin to the case when the replacement matrix is random and balls of different colours have different weights/activities, and provides functional limit theorems for the fluctuations of $U(n)/n$ around its limit.

The aim of this paper is to generalise the results of Athreya \& Karlin~\cite{AK68} to the context of multi-drawing urns: the idea is that, instead of drawing one ball at random in the urn at every time step, we draw a couple of balls from the urn (with replacement) and, if the first ball is of colour $j$, and the second one of colour $k$, then we add $R_{ijk}$ balls of colour $i$, for all $1\leq i\leq d$.
The collection of integers $(R_{ijk})_{1\leq i,j,k\leq d}$ is called the ``replacement tensor'' 
of the multi-drawing urn.
Note that our aim is in fact to generalise to the case when an $m$-tuple of balls is drawn from the urn at every time step, and our results do apply to $m>2$.
However, in order to keep notation as simple as possible we choose to state and prove our results in the case $m=2$; the extension to $m>2$ is outlined in~\cref{sec:multi}.
The main difficulty is that the methods of Athreya \& Karlin~\cite{AK68} and Janson~\cite{Jans04} both rely on embedding the urn process into continuous time to get a multi-type branching process. 
Although this could be done in the multi-drawing case, the resulting continuous-time process is not a branching process. 

Multi-drawing P\'olya urns have been studied in the literature, mostly the two-colour case.
For example, Chen \& Wei~\cite{CW}, Chen \& Kuba~\cite{CK} and Kuba, Mahmoud \& Panholzer~\cite{KMP}
and Aguech, Lasmar \& Selmi~\cite{ALS} all study particular examples.
Kuba \& Mahmoud~\cite{KM} and Kuba \& Sulzbach~\cite{KS}
gave general convergence results (and up to third-order asymptotics) for the composition of a $2$-colour urn under the assumption that there exists two deterministic sequences $(\alpha_n)_{n\geq 0}$ and $(\beta_n)_{n\geq 0}$ such that, 
for all $n\geq 0$, $\mathbb E[U_1(n)|\mathcal F_n] = \alpha_n U_1(n) +\beta_n$; they call this assumption the ``affinity'' assumption. This assumption is arguably quite restrictive, but it has the advantage to make the problem amenable to standard martingale techniques.
Later, Lasmar, Mailler \& Selmi~\cite{LMS18} proved a convergence result for general $m$-drawing P\'olya urns. Their results, proved using stochastic approximation methods, apply without the affinity assumption, but under assumptions that are difficult to verify in practice.

In this paper, we use a different point of view on multi-drawing P\'olya urns to provide general convergence results for multi-drawing P\'olya urns under assumptions that are easy to verify in practice.
Namely, inspired by the work of Mailler \& Marckert~\cite{MM17} who see P\'olya urns 
as branching Markov chains on the random recursive tree, 
we see the multi-drawing P\'olya urns as ``higher order Markov chains'' or ``$m$-dependent Markov chains'' indexed by random DAGs. 
Our proof relies on both the theory of higher-order Markov chains and their transition tensors (equivalent of the transition matrix for a Markov chain), and asymptotic results on the shape of a typical large random DAG.

In the rest of the introduction, we first state our main result, before giving the main ideas behind the proof, and then discussing the result in light of the existing literature.

\subsection{Main Result}

\begin{definition}\label{def:multi-urn}
  Let $(R_{ijk} = R(i,j,k))_{1\leq i,j,k\leq d}$ be a collection of non-negative real numbers.
  We define the \emph{2-drawing d-colour Pólya urn with replacement $R$} as a stochastic process $(U(n) = (U_1(n), \ldots, U_d(n)))_{n\geq 0}$ on $\mathbb N^d$ satisfying
  \[\qquad U_i(n+1) = U_i(n) + R(i, C_1(n), C_2(n)),\]
  for all $n\geq 0$ and $1\leq i\leq d$,
  where $(C_1(n), C_2(n))$ is a couple of independent and identically-distributed random variables on $\{1, \ldots, d\}$ satisfying
  \[\Prob*{C_1(n) = k}=\Prob*{C_2(n) = k} = \frac{U_k(n)}{\sum_{i=1}^d U_i(n)},\] 
  for all $k \in \{1,\ldots,d\}$.
\end{definition}
In other words, if we interpret $(U(n))_{n\geq 0}$ as the composition of an urn as time increases, then this process encodes the following experiment: At every time-step, we pick two balls uniformly at random in the urn and let $C_1(n)$ denote the colour of the first ball, $C_2(n)$ the colour of the second ball (note that we put the first ball back into the urn before drawing the second ball, and then put the second ball back in the urn as well). We then add $R(i,C_1(n), C_2(n))$ balls of colour $i$ in the urn, for all $1\leq i\leq d$.

\begin{remark}
  In the interpretation as balls in an urn, we need that $R_{ijk}\in\mathbb N$ for all $1\leq i,j,k\leq d$. 
  This assumption is not needed in Definition~\ref{def:multi-urn}. 
  Also note that we require that $R_{ijk}\geq 0$ for all $1\leq i,j,k\leq d$; in terms of balls in an urn, this means that we never remove balls from the urn. 
  In practice, the model still makes sense if we remove balls from the urn, as long as all the coordinates of $U(n)$ stay non-negative. 
  In the literature on classical P\'olya urns, one can either work conditionally on the event that all coordinates of $(U(n))_{n\geq 0}$ stay non-negative, or choose a replacement rule such that this event holds almost surely.
\end{remark}

Note that, just as a $d\times d$ matrix defines a linear map from $\mathbb R^d$ into itself, a collection $(R_{ijk} = R(i,j,k))_{1\leq
  i,j,k\leq d}$ of real numbers defines a bilinear map from $\mathbb R^d\times \mathbb R^d$ into $\mathbb R^d$. Indeed, for all $x = (x_1, \ldots, x_d)$ and $y = (y_1, \ldots, y_d)$ in $\mathbb R^d$, if we set, for all $1\leq k\leq d$,
\begin{equation}\label{eq:def_tensor}
  \mathcal R_k(x, y) = \sum_{i=1}^d \sum_{j=1}^d R_{kij} x_i y_j,
\end{equation}
and $\mathcal R(x,y) = (\mathcal R_1(x,y), \ldots, \mathcal R_d(x,y))\in \mathbb R^d$, 
then $\mathcal R$ is a bilinear map from $\mathbb R^d\times R^d\to \mathbb R^d$. 
We refer to both $R=(R_{ijk})_{1\leq i,j,k\leq d}$ and $\mathcal R$ as ($3$-mode) tensors.
Note that, if we let $(e_1, \ldots, e_d)$ be the canonical basis of $\mathbb R^d$, then, for all $1\leq i,j\leq d$,
\[\mathcal R(e_i, e_j) = (R_{1ij}, \ldots, R_{dij}),\]
which correspond to what we add in the urn when we first draw a ball of colour $i$ and then a ball of colour $j$.

In this paper, we assume that the replacement tensor $(R_{ijk})_{1\leq i, j, k\leq d}$ satisfies the following assumptions:
\begin{itemize}
\item[{\bf (T)}] For all $1\leq i,j,k\leq d$, $R_{ijk} \geq 0$.
\item[{\bf (B)}] There exists $\sigma \in (0,\infty)$ such that, for all $1\leq j,k\leq d$, $\sum_{i=1}^d R_{ijk} = \sigma$.
\item[{\bf (E)}] $ \max_{1\leq j, j', k, k'\leq d} \sum_{i=1}^d \abs*{R_{i j k} - R_{i j' k'}} < \sigma$.
\end{itemize}

We define
\[\Delta_d = \Set*{x \in [0,1]^d \given \norm{x}{1} = 1}.\]

\begin{theorem}\label{thm:main-result}
  Let $(U(n))_n$ be a $2$-drawing $d$-colour urn whose replacement tensor $\mathcal R$ satisfies assumptions {\bf (T)}, {\bf (B)} and {\bf (E)}. Then, 
  \begin{itemize}
  \item[{\bf (i)}] there exists a unique $x^* \in \Delta_d$ such that $\sigma x^* = \mathcal R(x^*,x^*)$ and,
  \item[{\bf (ii)}] as long as $\|U(0)\|_1>0$, in probability as $n\uparrow\infty$,
    \[\frac{U(n)}{\|U(n)\|_1} \to x^*.\] 
  \end{itemize}
\end{theorem}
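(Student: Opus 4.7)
The plan is to prove (i) by a Dobrushin-style contraction argument on $\Delta_d$, and then to deduce (ii) by encoding the urn process as a colour-labelled random DAG, as advertised in the introduction.

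For (i), observe that by assumption (B), the map $\Phi(x) := \mathcal R(x,x)/\sigma$ sends $\Delta_d$ into itself, since $\sum_k \mathcal R_k(x,y) = \sum_{i,j,k} R_{kij} x_i y_j = \sigma(\sum_i x_i)(\sum_j y_j) = \sigma$ whenever $x,y\in\Delta_d$. I would then establish the stronger statement that $\Phi$ is a strict $\ell^1$ contraction, from which Banach's fixed point theorem yields both existence and uniqueness of $x^*$. The key bilinearisation is
$$\Phi(x)-\Phi(y) = \tfrac{1}{\sigma}\bigl(\mathcal R(x, x-y) + \mathcal R(x-y, y)\bigr),$$
so it suffices to bound $w \mapsto \mathcal R(u, w)/\sigma$ (and its transposed version) on the zero-sum hyperplane for fixed $u \in \Delta_d$. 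For each $j$, the matrix $M_j := (R_{ijk}/\sigma)_{i,k}$ is column-stochastic by (B); writing $\delta$ for the left-hand side of (E), assumption (E) bounds its Dobrushin coefficient $\tau(M_j) = \tfrac{1}{2}\max_{k,k'}\|M_j e_k - M_j e_{k'}\|_1$ by $\delta/(2\sigma)$. Since column-stochastic matrices contract $\ell^1$ on the zero-sum hyperplane by their Dobrushin coefficient, averaging over $j$ against $u$ yields $\|\mathcal R(u, w)\|_1/\sigma \leq (\delta/(2\sigma))\|w\|_1$, and combined with the symmetric bound one concludes $\|\Phi(x) - \Phi(y)\|_1 \leq (\delta/\sigma)\|x-y\|_1$ with $\delta/\sigma < 1$.

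For (ii), I would realise the urn as a random DAG in which each drawing event at time $n$ becomes a new node with two in-edges pointing to $C_1(n), C_2(n)$; this node carries the $\sigma$ balls added at that event, whose colours are a deterministic function of the parent colours (exactly $R(i, j, k)$ of them are of colour $i$ when the ordered parent colours are $(j,k)$). Then $X(n) := U(n)/\|U(n)\|_1$ is the empirical colour distribution of the ball-labels across the first $n$ events (plus the initial balls). Convergence $X(n) \to x^*$ should then decompose into two ingredients: a \emph{tensor ergodicity} piece, asserting that the iterated two-parent colour update contracts geometrically to $x^*$ (this is precisely (i), transported to a statement about the second-order Markov chain on colours along the DAG), and a \emph{random DAG shape} piece, saying that a typical node lies at generation $\Theta(\log n)$ and that the ancestral histories of its two parents decorrelate as $n \to \infty$. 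Combining these forces $X(n)$ to concentrate at the fixed point $x^*$.

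As a sanity check that the conclusion is the right one, the conditional-mean identity
$$\mathbb E[X(n+1) \mid \mathcal F_n] = \tfrac{\|U(0)\|_1 + n\sigma}{\|U(0)\|_1 + (n+1)\sigma}\, X(n) + \tfrac{\sigma}{\|U(0)\|_1 + (n+1)\sigma}\, \Phi(X(n)),$$
combined with the contraction from (i), gives a deterministic contraction of $\|\mathbb E[X(n+1) \mid \mathcal F_n] - x^*\|_1$ by the factor $(\|U(0)\|_1 + n\sigma + \delta)/(\|U(0)\|_1 + (n+1)\sigma)$, while the martingale fluctuation $\|X(n+1) - \mathbb E[X(n+1)\mid\mathcal F_n]\|_2$ is $O(1/n)$; this already suffices to conclude convergence in probability by standard stochastic approximation arguments. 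The main obstacle I foresee is not (i) — which is a short Dobrushin calculation — but making the DAG encoding of (ii) rigorous: identifying the correct two-parent Markov structure on colours, boosting the contraction of (i) into geometric ergodicity of this higher-order chain along the DAG, and quantifying the shape concentration of the random DAG sharply enough to control the nonlinearity of $\Phi$ along a typical ancestral line. This is exactly where the tools advertised in the introduction — stochastic tensors together with asymptotics of large random DAGs — are expected to do the work.
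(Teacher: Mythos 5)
Your part (i) is correct and essentially the paper's own computation: the bilinearisation $\Phi(x)-\Phi(y)=\tfrac1\sigma\bigl(\mathcal R(x,x-y)+\mathcal R(x-y,y)\bigr)$ together with the Dobrushin coefficients of the one-slot slice matrices gives $\|\Phi(x)-\Phi(y)\|_1\leq(\delta/\sigma)\|x-y\|_1$ with $\delta<\sigma$ by {\bf (E)}, and Banach's fixed point theorem on $\Delta_d$ finishes it. (The paper routes existence and uniqueness through Fasino--Tudisco and the condition $\tau_\ell+\tau_r<1$, but the underlying contraction estimate is the same; your version is self-contained and fine.)

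Part (ii), however, is a plan rather than a proof, and you say so yourself: ``the main obstacle I foresee is \dots making the DAG encoding of (ii) rigorous.'' The steps you defer are precisely where the work lies. Concretely, what is missing is: (a) the reduction lemma stating that if the pair of drawn colours $(C_1(n),C_2(n))$ converges in distribution to a product measure $\nu\otimes\nu$ with $\nu$ deterministic, then the random empirical measure $\hat m_n$ converges in probability to $\nu$ (the paper's Lemma~\ref{lem:randmeas-conv}) --- without this, convergence of $U(n)/\|U(n)\|_1$ does not follow from any distributional statement about a single node of the DAG; (b) the proof that the ancestral subgraph of node $n$ restricted to nodes $\geq n/\log n$ is, with high probability, a \emph{complete} binary tree up to depth $o(\log\log\log n)$ (Lemma~\ref{lem:EF}, which needs a quantitative argument on top of Janson's result, not just the heuristic that ancestral histories ``decorrelate''); (c) the identification of the correct $2$-dependent Markov chain, which lives on the \emph{product} colour space $\{1,\dots,d\}^2$ with transition tensor $T_{(i_1,i_2)(j_1,j_2)(k_1,k_2)}=R_{i_1j_1j_2}R_{i_2k_1k_2}/\sigma^2$, together with the verification that {\bf (E)} for $R$ implies $\tau_\ell(T)+\tau_r(T)<1$ for this lifted tensor (Lemma~\ref{lem:EimpliesT}) --- this is a genuine computation, not a restatement of (i); and (d) the identification of the invariant distribution of $T$ as $\nu\otimes\nu$ with $\sigma\nu=\mathcal R(\nu,\nu)$. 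Finally, your fallback ``sanity check'' does not close as stated: the triangle-inequality recursion it yields is $a_{n+1}\leq(1-c/n)a_n+C/n$ with $a_n=\mathbb E\|X(n)-x^*\|_1$, whose conclusion is only $\limsup_n a_n\leq C/c$, a positive constant; turning the conditional-mean contraction plus $O(1/n)$ martingale increments into actual convergence requires the full stochastic-approximation/ODE machinery (Robbins--Siegmund, summability of the squared step sizes), which is exactly the alternative route of Lasmar--Mailler--Selmi that the paper deliberately avoids. So the statement is plausible and your outline points in the right direction, but the proof of (ii) is not there.
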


\begin{remark}[Discussion of the assumptions]
  The first assumption ensures that the urn is ``tenable'', i.e.\ that all coordinates of $(U(n))_{n\geq 0}$ are non-negative at all times, and thus the urn process is well-defined.
  The second assumption is common (although not desirable) in the classical P\'olya urn case and is called the ``balance'' assumption. 
  It ensures that the total number of balls in the urn is a deterministic function of time; 
  more precisely, for all $n\geq 0$,
  \[\|U(n)\|_1 = \sum_{i=1}^d U_i(n) = \|U(0)\|_1 + \sigma n.\]
  Assumption {\bf (E)} ensures that the map $x\mapsto \mathcal R(x,x)/\sigma$ is a contraction: it implies that there exists $q\in(0,1)$ such that, for all $x,y\in\Delta_d$,
  $\|\mathcal R(x, x)- \mathcal R(y,y)\|_1\leq q\sigma\|x-y\|_1$. 
  In the classical case of balanced uni-drawing urns (of replacement matrix $\mathfrak r$), 
  this assumption would be $\max_{1\leq i,j\leq d}\mathfrak r_{i,j}<\sigma$, which, as
  explained in~\cite[Theorem~4.3]{FT20}, 
  implies that the map $x\mapsto x\mathfrak r/\sigma$ is a contraction and thus that the Markov chain
  of transition matrix $\mathfrak r/\sigma$ is ergodic. 
  This is why we call this assumption the \emph{ergodicity} assumption.
\end{remark}

A straightforward generalisation is for the case of (finitely-many-colour) $m$-drawing P\'olya urns. The only reason we state our main theorem and its proof for $m=2$ is to keep the notation as simple as possible and thus make the result and its proof easier to read. No additional idea is needed for larger values of~$m$.
The equivalent of Theorem~\ref{thm:main-result} and a sketch of its proof are provided in Section~\ref{sec:multi}.

Interestingly, most of our proof holds even if the set of colours is infinite (possibly uncountable).
However, Assumption {\bf (E)} is much more abstract in this setting and difficult to check in practice: 
we thus only state our result for infinitely-many-colour, 
two-drawing P\'olya urns in Section~\ref{sec:infinite}.

\subsection{Ideas of the proof}
As mentioned in the introduction, our proof relies on seeing the $m$-drawing P\'olya urn as an ``$m$-dependent'' Markov chain indexed by a random DAG. 
For all $n\geq 0$, node $n$ in the DAG is labelled by the set of balls added at time $n$ in the urn.
The $m$ parents of node $n$ are the nodes corresponding to the $m$ balls drawn at random at time~$n$ in the urn. We will show that the DAG obtained is the ``uniform recursive DAG'', and that the distribution of the label of each node only depends on the labels of its~$m$ parents.
When $m=1$, as seen in~\cite{MM17}, we get a branching Markov chain on the random recursive tree; we call the analogous object when $m\geq 2$ an ``$m$-dependent Markov chain on the uniform recursive DAG''.

We first use results of Janson~\cite{Jans14} to prove that the uniform recursive DAG ``looks locally like a complete $m$-ary tree'' and thus reduce our study to $m$-dependent Markov chains on the infinite $m$-ary tree. This removes the difficulty coming from the fact that a DAG has (non-directed) loops.

Ergodicity of $m$-dependent Markov chains on the infinite $m$-ary tree has been studied in the literature (see, e.g.~Ching, Ng and Fung~\cite{CNF08}). As developed by Lim~\cite{Lim05} and Qi~\cite{Qi05}, 
the transition probabilities of such a process can be represented by a transition tensor (instead of a transition matrix for classical $1$-dependent Markov chains).
The question of existence and uniqueness of stationary distributions for $m$-dependent Markov chains leads to the study of fixed points, or \emph{Z-eigenvectors}, of such tensors.
As tensors are multilinear maps, 
solving fixed-point equations is in general much harder than for matrices (in fact, it is NP-hard for general tensors -- see~\cite{SPY16,Cir07}). 
Some sufficient criteria for uniqueness of a fixed point of a transition tensor are given in Li and Ng~\cite{LN14}, Chang and Zhang~\cite{CZ13}, Wang and Li~\cite{WL23}, Fascino and Tudisco~\cite{FT20} (see also~\cite{CQZ13} for an early survey). 
A more general view is taken in Gautier, Tudisco and Hein~\cite{GTH23,GT19,GTH19,GTH19.2}, where the authors generalise the Perron-Frobenius theory to multi-homogeneous mappings.

Further, the existence of a unique fixed point, or stationary distribution, of such a tensor is not sufficient to imply convergence (in distribution) of the $m$-dependent Markov chain.
To get convergence of our $m$-dependent Markov chain on the infinite $m$-ary tree, we apply results of Fascino and Tudisco~\cite{FT20}.

\subsection{Discussion of our main result in view of the existing literature}
As already mentioned, general results for the convergence of multi-drawing P\'olya urns already exist in the literature. The aim of this section is to discuss our main result in view of this existing literature.

One strand of the literature focuses on ``affine'' P\'olya urns: see, in particular, Kuba \& Mahmoud~\cite{KM} and Kuba \& Sulzbach~\cite{KS}. 
They say that a P\'olya urn $(U(n))_{n\geq 0}$ 
is affine if there exists two deterministic sequences $(\alpha_n)_{n\geq 0}$ and $(\beta_n)_{n\geq 0}$ such that, 
for all $n\geq 0$, $\mathbb E[U_1(n)|\mathcal F_n] = \alpha_n U_1(n) +\beta_n$.
They are able to prove up to third-order asymptotic results for urns that satisfy this assumption.
In Section~\ref{sub:affine}, we show how to apply Theorem~\ref{thm:main-result} 
to affine two-drawing urns and recover the first-order results of Kuba \& Mahmoud~\cite{KM} 
for a large class of affine urns (albeit not all of them, importantly).
Because our result is not tailored for affine P\'olya urns, but applies more generally, it should not be surprising that our result does not fully recover the first-order results of Kuba \& Mahmoud~\cite{KM}, which use powerful martingale methods that apply only to affine urns.
These martingale methods are also the reason why Kuba \& Mahmoud~\cite{KM} and Kuba \& Sulzbach~\cite{KS} are able to prove second- and third-order asymptotic results for affine urns, while these remain completely open in our more general setting.

A different general result for multi-drawing P\'olya urns is given by Lasmar, Mailler and Selmi~\cite{LMS18}: 
they are able to prove up-to-second-order asymptotic results for balanced multi-drawing P\'olya urns.
The drawback of their first-order result is that the assumptions are difficult to check on practical examples; the advantage of it is its generality. 
In Section~\ref{sub:tunisian}, we apply Theorem~\ref{thm:main-result} to examples treated in~\cite{LMS18}. On one of these examples, our main result applies and gives convergence of the composition of the urn with much less effort than in~\cite{LMS18}. 
In many other of these examples, our main result simply does not apply.
In conclusion, the main advantage of Theorem~\ref{thm:main-result} is that its assumptions are easy to apply; however, these assumptions seem to be quite restrictive in practice.

Note that our theorem only applies to drawing balls ``with replacement'': when drawing our set of $m$ balls at every time-step, we first draw the first ball, take note of its colour, put it back in the urn, then draw the second ball, etc.
The results of~\cite{LMS18} hold both with or without replacement; those of \cite{KM} and \cite{KS} hold without replacement (although their proofs would adapt straightforwardly to the with replacement case).
Because the total number of balls in the urn tends deterministically to infinity (by Assumption {\bf (B)}), 
drawing with or without replacement {\it should not make any difference at large times}\footnote{This heuristic is supported by results in the literature where distances between the two measures are estimated (see, e.g.~\cite{Freedman77, Stam78}).} 
and our result should hold also in the without-replacement case. 
However, our proof does not straightforwardly extend to the without-replacement case.

Also note that, in our framework, at every step, the order in which the $m$ balls are sampled
can be taken into account. 
In other word, we do not assume that $R_{ijk} = R_{ikj}$ for al $1\leq i,j,k\leq d$.
As far as we know, this is not a possibility in the existing literature mentioned so far (\cite{KM, KS, LMS18}).
In Section~\ref{sub:affine}, we include two examples in which the order of the $m$ balls matter in the replacement rule and call them ``asymmetric examples''.

Finally, and importantly, a strength of our result and proof is that it extends to infinitely-many colour P\'olya urns, as we show in Section~\ref{sec:infinite}. The only caveat is that the equivalent of Assumption {\bf (E)} for infinitely-many colour P\'olya urns is very hard to check, mainly because the paper of Fasino and Tudisco~\cite{FT20} only holds for $m$-dependent Markov chains on finite state-space: 
we leave it for future work to extend the results of Fasino and Tudisco~\cite{FT20} to infinite state-space and thus provide an easier-to-check sufficient assumption for {\bf (E)}.

{\bf Plan of the paper:}
In Section~\ref{sec:MC}, we introduce our new point of view on multi-drawing P\'olya urns: seeing them as a random labelling of a uniform random directed acyclic graph (DAG). We also give preliminary results on the typical shape of the uniform random DAG.
In Section~\ref{sec:distribution-xn}, we prove Theorem~\ref{thm:main-result}.
In Section~\ref{sec:multi}, we show how Theorem~\ref{thm:main-result} can be adapted to larger values of~$m$.
In Section~\ref{sec:examples}, we give a series of examples.
Finally, in Section~\ref{sec:infinite}, we show how our model and Theorem~\ref{thm:main-result} can be extended to multi-drawing P\'olya urns with infinitely-many colours.

\section{A Markov chain indexed by a uniform random DAG model}\label{sec:MC}
In this section, we show that the multi-drawing P\'olya urn can be seen as a randomly-labelled uniform recursive DAG. This is the equivalent to the result of Mailler \& Marckert \cite{MM17}, which says that P\'olya urns can be seen as branching Markov chains on the random recursive tree.
Because of the multi-drawing, the underlying graph is a DAG instead of a tree.

\subsection{Seeing the urn process as a measure-valued process}\label{sub:MVPP}
In the following, it is convenient to consider the measure-valued process
\[\bigg(m_n := \sum_{i=1}^d U_i(n) \delta_i\bigg)_{n\geq 0},\]
which contains all the information about $(U(n))_{n\geq 0}$.
Under assumption {\bf (T)}, $m_n$ is a measure on $\{1, \ldots, d\}$ for all $n\geq 0$.
For all $n\geq 0$, we let
\[\hat m_n = \frac{m_n}{m_n(\{1, \ldots, d\})},\]
so that $\hat m_n$ is a probability measure for all $n\geq 0$.

With this new perspective, it is convenient to also have a measure version of the replacement tensor: for all $1\leq j,k\leq d$, we let
\begin{equation}\label{eq:replacement_measures}
  \rho(j,k) = \sum_{i=1}^d R_{ijk}\delta_i.
\end{equation}
With this notation, we have that, for all $n\geq 0$,
\[m_{n+1} = m_n + \rho(C_1(n), C_2(n)),\]
where, given $m_n$, $(C_1(n), C_2(n))\sim \hat m_n\otimes \hat m_n$.

\begin{lemma}\label{lem:truc}
  The following two statements are equivalent:
  \begin{itemize}
  \item[{\bf (a)}] $U(n)/\|U(n)\|_1 \to x^*$ in probability as $n\uparrow\infty$.
  \item[{\bf (b)}] $\hat m_n \to \nu$ in probability as $n\uparrow\infty$, for the topology of weak convergence on the set of probability measures on $\{1, \ldots, d\}$, where $\nu = \sum_{i=1}^d x_i^* \delta_i$.
  \end{itemize}
\end{lemma}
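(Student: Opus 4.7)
The plan is to observe that the underlying space $\{1,\ldots,d\}$ is finite and discrete, so the topology of weak convergence on probability measures on $\{1,\ldots,d\}$ is very concrete and reduces to pointwise convergence of the atom masses. Concretely, a sequence $(\mu_n)_{n\geq 0}$ of probability measures on $\{1,\ldots,d\}$ converges weakly to a probability measure $\mu$ if and only if, for all $i\in\{1,\ldots,d\}$, $\mu_n(\{i\})\to\mu(\{i\})$, since every function $f\colon\{1,\ldots,d\}\to\mathbb{R}$ is continuous and bounded and is a linear combination of indicators of singletons.

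From there, I would introduce a metric that induces this topology, for instance the total variation distance
\[d_{\mathrm{TV}}(\mu,\mu') = \tfrac{1}{2}\sum_{i=1}^d \bigl|\mu(\{i\}) - \mu'(\{i\})\bigr|.\]
By definition of $\hat m_n$ and $\nu$, for every $i\in\{1,\ldots,d\}$ we have $\hat m_n(\{i\}) = U_i(n)/\|U(n)\|_1$ and $\nu(\{i\}) = x_i^*$, so
\[d_{\mathrm{TV}}(\hat m_n, \nu) = \tfrac{1}{2}\Bigl\|\tfrac{U(n)}{\|U(n)\|_1} - x^*\Bigr\|_1.\]
Convergence in probability of $\hat m_n$ to $\nu$ for the weak topology means convergence in probability with respect to any metric (in particular $d_{\mathrm{TV}}$) inducing it. The displayed identity then turns weak-topology-in-probability convergence of $\hat m_n\to\nu$ into $\ell^1$-convergence in probability of $U(n)/\|U(n)\|_1$ to $x^*$.

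Finally, since we work in the finite-dimensional space $\mathbb{R}^d$, all norms are equivalent; so convergence in probability in $\ell^1$ is the same as convergence in probability for any standard norm on $\mathbb{R}^d$, which is the statement $U(n)/\|U(n)\|_1\to x^*$ in probability given in \textbf{(a)}. This establishes both implications simultaneously, making the lemma an equivalence. There is no substantive obstacle here; the only care needed is to make explicit that ``convergence in probability for the weak topology'' is unambiguous because that topology is metrizable, and to note that one may freely substitute $d_{\mathrm{TV}}$ for any other metric inducing it (e.g.\ the Lévy–Prokhorov metric), which is what allows the clean identification with $\ell^1$ convergence of the probability vectors.
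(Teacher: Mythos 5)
Your proposal is correct and follows the same route as the paper, which simply observes that on the finite set $\{1,\ldots,d\}$ weak convergence is equivalent to convergence of the masses $\mu_n(\{i\})\to\mu(\{i\})$ for each $i$, making the lemma immediate; you merely make the metrization by total variation explicit. No gap.
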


\begin{remark}
  Let $(\mu_n)_{n\geq 0}$ and $\mu$ be probability measures on a measurable space $E$.
  We say that $\mu_n\to \mu$ for the topology of weak convergence if, for all continuous and measurable functions $f\colon E\to \mathbb R$, $\int_E f \mathrm d\mu_n \to \int_E f \mathrm d\mu_n$.
  If $E = \{1, \ldots, d\}$, it is equivalent to $\mu_n(\{i\})\to \mu(\{i\})$ for all $1\leq i\leq d$, and thus Lemma~\ref{lem:truc} is immediate.
\end{remark}

Our aim is thus to show that $\hat m_n \to \nu = \sum_{i=1}^d x_i^*\delta_i$, in probability as $n\uparrow\infty$. To do so, we use the following lemma, which is folklore, and whose proof can be found in~\cite{MM17} (see Lemma~3.1 therein). We use the following notation: for any two probability measures $\mu$ and $\nu$, we write $(A,B)\sim \mu\otimes \nu$ to say that $A\sim\mu$ and $B\sim\nu$ are independent.

\begin{lemma}\label{lem:randmeas-conv}
  Let $(\nu_n)_{n\geq 0}$ be a sequence of random probability measures on a measurable space $E$ and $\nu$ be a deterministic probability measure on $E$. 
  For any integer $n\geq 0$, given $\nu_n$, we let $(A_n, B_n)\sim\nu_n\otimes\nu_n$.
  If, in distribution as $n\uparrow\infty$,
  \[(A_n, B_n) \Rightarrow \nu\otimes \nu,\]
  then $\nu_n \to \nu$ in probability as $n\uparrow\infty$, for the topology of weak convergence on the set of probability measures on~$E$.
\end{lemma}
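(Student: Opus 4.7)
The plan is to reduce weak convergence in probability of $\nu_n$ to $\nu$ to an $L^2$ estimate on integrals against a sufficiently rich family of bounded continuous test functions. The starting observation is that, conditionally on $\nu_n$, the pair $(A_n, B_n)$ is i.i.d.\ with common law $\nu_n$, so that for any pair of bounded measurable functions $f, g\colon E \to \mathbb R$,
\[\Exp*{f(A_n)\,g(B_n)} = \Exp*{\Exp*{f(A_n)\given\nu_n}\,\Exp*{g(B_n)\given \nu_n}} = \Exp*{\int_E f\dif\nu_n\cdot\int_E g\dif\nu_n}.\]

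Specialising to $g = f$ bounded and continuous, the map $(a,b)\mapsto f(a)f(b)$ is bounded and continuous on $E\times E$, so the assumption $(A_n,B_n)\Rightarrow \nu\otimes\nu$ together with the Portmanteau theorem yields
\[\Exp*{\left(\int_E f\dif\nu_n\right)^2} = \Exp*{f(A_n)f(B_n)} \longrightarrow \Exp*{f(A)f(B)} = \left(\int_E f\dif\nu\right)^2,\]
where the last equality uses that $A$ and $B$ are independent under $\nu\otimes \nu$. The same argument applied to the marginal distribution of $A_n$ gives $\Exp*{\int_E f\dif\nu_n} = \Exp*{f(A_n)}\to \int_E f\dif\nu$. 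Consequently the variance of $\int_E f\dif\nu_n$ tends to $0$ and its mean to $\int_E f\dif\nu$, so $\int_E f\dif\nu_n \to \int_E f\dif\nu$ in $L^2$, and in particular in probability, for every bounded continuous $f\colon E\to\mathbb R$.

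It remains to upgrade this pointwise-in-$f$ statement to convergence in probability of $\nu_n$ to $\nu$ in the weak topology. In our application $E = \{1,\ldots,d\}$ is finite, so testing against the indicators $f = \1_{\{i\}}$, $i\in\{1,\ldots,d\}$ (which are continuous on the discrete space $E$) gives $\nu_n(\{i\})\to \nu(\{i\})$ in probability for each $i$, which is exactly weak convergence in probability in this setting. For a general Polish space $E$, the weak topology is metrisable (e.g.\ by the bounded Lipschitz metric) and determined by a countable family $(f_k)_{k\geq 1}$ of bounded continuous functions; applying the $L^2$-step to each $f_k$ and combining via the metric recovers convergence in probability for the weak topology.

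The main obstacle is really this last packaging step: the $L^2$-argument is short and entirely driven by the conditional-independence identity above, but converting ``$\int f\dif\nu_n\to\int f\dif\nu$ in probability for every bounded continuous $f$'' into a genuine convergence in probability of the random measures themselves uses metrisability of the weak topology. In the finite-$E$ case relevant to \cref{thm:main-result} this is immediate, which is why the lemma is described as folklore.
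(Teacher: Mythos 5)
Your proof is correct and is essentially the standard second-moment argument: the paper does not prove this lemma itself but cites it as folklore from Lemma~3.1 of Mailler \& Marckert~\cite{MM17}, and that proof proceeds exactly as yours does, using $\mathbb E\bigl[f(A_n)f(B_n)\bigr]=\mathbb E\bigl[\bigl(\int_E f\,\mathrm d\nu_n\bigr)^2\bigr]$ to show the variance of $\int_E f\,\mathrm d\nu_n$ vanishes. Your final packaging step (indicators in the finite case, a countable convergence-determining family plus metrisability in the Polish case) is also the standard way to conclude, so there is nothing to add.
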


\begin{remark}
  In Lemma~\ref{lem:randmeas-conv}, it is important to note that $A_n$ and $B_n$ are not independent random variables. Indeed they both depend on $\nu_n$, their random distribution.
\end{remark}

\subsection{Coupling of the urn process with a randomly labelled DAG}\label{sub:coupling}

\begin{definition}
  For fixed $\pi$ a probability measure on $\{1, \ldots, d\}$ 
  and $3$-mode tensor~$\mathcal R$, which satisfies Assumptions {\bf (T)} and {\bf (B)},
  we jointly define a sequence of random DAGs $(G_n = (V_n, E_n))_{n\geq 0}$, and a sequence of random labels $(X(n) = (X_1(n), X_2(n))\in\{1, \ldots, d \}^2)_{n\geq 1}$ as follows:
  We first set $G_0 = (\{0\}, \varnothing)$ (this is the graph with one node and no edges). 
  Node $0$ has no label.
  Then, for all $n\geq 0$, given $G_n$ and $X(0), \ldots, X(n)$,
  we let $V_{n+1} = \{0, \ldots, n+1\}$ be the set of nodes of $G_{n+1}$ and:
  \begin{itemize}
  \item We sample $M_{n+1}$ and $D_{n+1}$ in $\{0, \ldots, n\}$ independently, uniformly at random in $\{0, \ldots, n\}$.
  \item We then sample $\tilde C_1(n)$ and $\tilde C_2(n)$ independently with the following distributions:
    \begin{itemize}
    \item If $M_{n+1} = 0$, then $\tilde C_1(n)\sim \pi$, otherwise, $\tilde C_1(n)\sim \rho(X(M_{n+1}))/\sigma$.
    \item If $D_{n+1} = 0$, then $\tilde C_2(n)\sim \pi$, otherwise, $\tilde C_2(n)\sim \rho(X(D_{n+1}))/\sigma$.\end{itemize}
    (See Equation~\eqref{eq:replacement_measures} for the definition of $\rho$.)
  \end{itemize}
  We let $E_{n+1} = E_n \cup \{(n+1, M_{n+1})\}\cup\{(n+1, D_{n+1})\}$ 
  be the set of edges of $G_{n+1}$. 
  In other words, at every time-step, we add a node to the DAG, and create two directed
  edges from this new node to its two randomly-chosen parents 
  (the notation $M(n)$ and $D(n)$ stand for ``Mum'' and ``Dad'').
  
  We let, for all $1\leq i\leq d$,
  \[X(n+1) = (\tilde C_1(n), \tilde C_2(n)),\]
  where we interpret $X(n)$ as the ``label of node $n$''. 
  By Assumptions {\bf (T)} and {\bf (B)}, $\rho(X(n))/\sigma$ is a probability distribution.
  Thus, the process $(G_n, X(n))_{n\geq 0}$ is well-defined; 
  we call this process the $\mathcal R$-labelled DAG process with initial distribution $\pi$.
\end{definition}

\begin{lemma}
  Let $\mathcal R$ be a tensor satisfying Assumptions {\bf (T)} and {\bf (B)} and
  let $(G_n, X(n))_{n\geq 0}$ be the $\mathcal R$-labelled DAG process with initial distribution $\pi$.
  For all $n\geq 0$, let $m_n = \pi + \sum_{i=1}^n \rho(X(n))$.
  Then, $(m_n)_{n\geq 0}$ is the (measure-valued version of the) multi-drawing P\'olya urn with initial composition $\pi$ and replacement tensor~$\mathcal R$.
\end{lemma}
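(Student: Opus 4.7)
The plan is to show by induction on $n$ that the measure-valued process $(m_0, \ldots, m_n)$ produced by the labelled DAG construction has the same joint distribution as the multi-drawing P\'olya urn with initial composition $\pi$ and replacement tensor $\mathcal R$. The base case $n = 0$ is immediate since $m_0 = \pi$ in both models, and at time zero the DAG contains only the (unlabelled) node $0$.

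For the inductive step, it is enough to prove that, conditionally on $\mathcal F_n := \sigma(m_0, \ldots, m_n)$, the pair $(\tilde C_1(n), \tilde C_2(n))$ has distribution $\hat m_n \otimes \hat m_n$: then the increment $m_{n+1} - m_n = \rho(\tilde C_1(n), \tilde C_2(n))$ has the same conditional law as the urn increment $\rho(C_1(n), C_2(n))$, so the two processes can be coupled step by step. Conditional independence of $\tilde C_1(n)$ and $\tilde C_2(n)$ is built into the construction, since $M_{n+1}$ and $D_{n+1}$ are sampled independently and then the two label-draws are performed independently. Thus the main computation is to verify that the marginal conditional distribution of $\tilde C_1(n)$ equals $\hat m_n$.

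To that end, condition on the parent $M_{n+1}$: the law of $\tilde C_1(n)$ becomes a mixture, with a contribution from $\pi$ coming from $\{M_{n+1} = 0\}$ and contributions from $\rho(X(k))/\sigma$ coming from $\{M_{n+1} = k\}$ for each $k \geq 1$. Using assumption \textbf{(B)}, each $\rho(X(k))$ has total mass $\sigma$, so collecting terms rewrites the mixture as an appropriate normalisation of $\pi + \sum_{k=1}^n \rho(X(k)) = m_n$, that is, as $\hat m_n$. The main (but minor) obstacle is to verify that this mixture depends on the past only through $m_n$, so that one really reproduces the Markov transition of the urn; this is because permuting the labels $X(1), \ldots, X(n)$ preserves both $m_n$ and the above mixture, so only the multiset of labels (equivalently $m_n$) matters. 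Once these observations are in place, the induction closes and the lemma follows.
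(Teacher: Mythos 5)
Your proposal follows the only sensible route here, and it is the one the paper takes: the paper's entire proof of this lemma is the two words ``By induction,'' so your write-up is best read as supplying the details the authors omitted. The reduction to checking that, given the past, $(\tilde C_1(n),\tilde C_2(n))$ is i.i.d.\ with law $\hat m_n$ is exactly right, and the conditional independence claim is indeed immediate from the construction. (Your closing worry about whether the mixture depends on the past only through $m_n$ is a non-issue: once the mixture is identified with $\hat m_n$, it is by definition a function of $m_n$ alone.)

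The one step you should not wave through is ``collecting terms rewrites the mixture as an appropriate normalisation of $m_n$.'' Write the mixture out: conditionally on $G_n$ and $X(1),\dots,X(n)$, the law of $\tilde C_1(n)$ is
\[
\frac{1}{n+1}\,\pi \;+\; \frac{1}{n+1}\sum_{k=1}^{n}\frac{\rho(X(k))}{\sigma}
\;=\;\frac{1}{n+1}\,\pi+\frac{1}{(n+1)\sigma}\sum_{k=1}^{n}\rho(X(k)),
\]
whereas
\[
\hat m_n=\frac{1}{1+n\sigma}\Big(\pi+\sum_{k=1}^{n}\rho(X(k))\Big),
\]
since $\pi$ is a probability measure and each $\rho(X(k))$ has total mass $\sigma$ by {\bf (B)}. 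These two measures coincide for all label configurations only when $\sigma=1$: for $\sigma\neq 1$ the uniform choice of parent in $\{0,\dots,n\}$ gives node $0$ weight $1/(n+1)$ while $\hat m_n$ gives the initial component weight $1/(1+n\sigma)$, so the mixture is \emph{not} a scalar multiple of $m_n$. This defect is inherited from the paper's construction (uniform parents) rather than introduced by you, but your proof as written asserts an identity that fails. The clean repair is to reduce to $\sigma=1$ at the outset by replacing $R$ with $R/\sigma$ and $U(0)$ with $U(0)/\sigma$, which changes neither the draw probabilities nor the normalised composition $U(n)/\|U(n)\|_1$; with that normalisation made explicit, your induction closes.
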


\begin{proof}
  By induction.
\end{proof}

\begin{lemma}
  Let $R$ be a tensor satisfying Assumptions {\bf (T)} and {\bf (B)} and
  let $(G_n, X(n))_{n\geq 0}$ be the $\mathcal R$-labelled DAG process of initial distribution $\pi$.
  Then $(G_n)_{n\geq 0}$ is the uniform recursive DAG.
\end{lemma}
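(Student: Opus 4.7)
The plan is to note that the claim is essentially a restatement of definitions, and the proof is a short induction that decouples the graph structure from the labelling.

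First, I would recall (or state explicitly, if the paper hasn't already) the definition of the uniform recursive DAG with out-degree $2$ and parents drawn with replacement: one starts with the single vertex~$0$, and at each step $n+1$ adds a new vertex $n+1$ together with two directed edges from $n+1$ to two vertices $M_{n+1}, D_{n+1}$, where $M_{n+1}$ and $D_{n+1}$ are drawn independently and uniformly at random from $\{0,\ldots,n\}$, independently of all previous choices. This is exactly the distribution that needs to be matched.

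Next, I would run a straightforward induction on $n$, keeping track only of the graph marginal of $(G_n, X(n))_{n\geq 0}$. At step $n+1$ in the construction from the excerpt, the set of edges $E_{n+1} = E_n \cup \{(n+1, M_{n+1}), (n+1, D_{n+1})\}$ is a function of $E_n$, $M_{n+1}$ and $D_{n+1}$ alone. Crucially, $M_{n+1}$ and $D_{n+1}$ are drawn uniformly and independently from $\{0,\ldots,n\}$ and, by the construction, independently of $G_n$ and of the labels $X(0),\ldots,X(n)$. The labels $\tilde C_1(n)$ and $\tilde C_2(n)$ (which become $X(n+1)$) are generated afterwards from $M_{n+1}, D_{n+1}$ and $X(M_{n+1}), X(D_{n+1})$, but they do not feed back into the edge set. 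Thus the marginal law of $G_{n+1}$ given $G_n$ is exactly that of the URD evolution step, and the inductive hypothesis that $G_n$ has the URD distribution lifts to $G_{n+1}$.

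The only place this could go wrong is a mismatch between the paper's convention for the uniform recursive DAG (for instance, with or without self-loops, with or without replacement when choosing the two parents, or a different convention for the root) and the construction in the excerpt. I would therefore be careful to state up front the convention being used -- here, parents are sampled independently and uniformly in $\{0,\ldots,n\}$, so both $M_{n+1}=D_{n+1}$ and self-loops at the new vertex do not occur (since $n+1 \notin \{0,\ldots,n\}$), but a multi-edge between $n+1$ and a single parent is possible when $M_{n+1}=D_{n+1}$. Apart from reconciling this convention, there is no technical obstacle: the proof is essentially a tautology, namely that the graph-valued marginal of the joint construction is built by exactly the same recipe as the URD.
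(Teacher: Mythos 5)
Your argument is correct and matches the paper, which simply declares the proof \emph{immediate}: the graph marginal of the construction samples the two parents of each new node uniformly and independently from the existing nodes, with the labels playing no role in the edge set, which is precisely the definition of the uniform recursive DAG. Your care about conventions (parents with replacement, possible double edges) is a reasonable addition but does not change the substance.
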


\begin{proof}
  Immediate.
\end{proof}

The advantage of this coupling is the following fact: for all $n\geq 0$, given $(G_n, X(0), \ldots, X(n))$,
\[\big(\tilde C_1(n), \tilde C_2(n)\big)\sim \hat m_n\otimes \hat m_n.\]
Thus, by Lemma~\ref{lem:randmeas-conv}, to prove Theorem~\ref{thm:main-result}(ii), it is enough to show that, in distribution as $n\uparrow\infty$,
\begin{equation}\label{eq:vcC1C2}
  X(n+1) = \big(\tilde C_1(n), \tilde C_2(n)\big) \Rightarrow \nu \otimes \nu,
\end{equation}
where $\nu = \sum_{i=1}^d x^*_i \delta_i$.

In other words, the following theorem implies Theorem~\ref{thm:main-result}$(ii)$:
\begin{theorem}\label{thm:convergence}
Let $R$ be a tensor satisfying Assumptions {\bf (T)}, {\bf (B)} and {\bf (I)}.
Let $(G_n, X(n))_{n\geq 0}$ be the $\mathcal R$-labelled DAG process.
Then,
in distribution as $n\uparrow\infty$,
\[X(n)\Rightarrow \nu\otimes \nu.\]
\end{theorem}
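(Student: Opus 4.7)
The plan is to reduce the analysis of the DAG-indexed process $(X(n))$ to that of a labelled complete binary tree, on which the label recursion admits a contraction driven by Assumption~{\bf (E)}. Fix $k\geq 1$ and let $A_k(n)$ denote the subgraph of $G_n$ induced by node $n$ together with all its DAG-ancestors of generation at most $k$, and let $E_{n,k}$ be the event that the $2^{k+1}-1$ nodes of $A_k(n)$ are pairwise distinct and all different from node $0$. Using the results of Janson~\cite{Jans14} on the local structure of the uniform recursive DAG (or directly, by union-bounding the $O(4^k)$ pairs of potential ancestors together with the $O(2^k)$ chances for node $0$ to appear, each of probability $O(1/n)$), one obtains $\mathbb P(E_{n,k}^c)=O(4^k/n)$, so that $\mathbb P(E_{n,k})\to 1$ as $n\to\infty$ for every fixed $k$.

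Under $E_{n,k}$, the graph $A_k(n)$ is a complete binary tree $\mathcal T_k$ of depth $k$ with $2^k$ leaves, and by construction of the coupling of Section~\ref{sub:coupling}, the random bits used to build the label of each internal node of $\mathcal T_k$ are mutually independent. Propagating labels upward from the leaves, a straightforward induction on the depth shows that, conditionally on the leaf labels $\eta=(X(v))_{v\text{ leaf}}$, the conditional distribution of $X(v)$ at every internal node $v$ is a product measure $m_v^L\otimes m_v^R$ on $\{1,\ldots,d\}^2$ whose marginals satisfy the bilinear recursion $m_v^L=\mathcal R(m_{v_L}^L,m_{v_L}^R)/\sigma$ and $m_v^R=\mathcal R(m_{v_R}^L,m_{v_R}^R)/\sigma$, where $v_L,v_R$ are the two children of $v$, initialised at depth $k-1$ by $m_v^L=\rho(X(v_L))/\sigma$ and $m_v^R=\rho(X(v_R))/\sigma$. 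Now Assumption~{\bf (E)} is precisely the statement that the stochastic kernel $K((j,k),i)=R_{ijk}/\sigma$ from $\{1,\ldots,d\}^2$ to $\{1,\ldots,d\}$ has Dobrushin coefficient $\delta<1/2$, from which one deduces the bilinear Lipschitz estimate $\|\mathcal R(x,y)/\sigma-\mathcal R(x',y')/\sigma\|_1\leq \delta(\|x-x'\|_1+\|y-y'\|_1)$ for every $x,y,x',y'\in\Delta_d$. Using $\nu=\mathcal R(\nu,\nu)/\sigma$ from Theorem~\ref{thm:main-result}(i) and iterating this estimate from the root of $\mathcal T_k$ down to depth $k-1$ yields
\[\|m^L_{\text{root}}-\nu\|_1\vee\|m^R_{\text{root}}-\nu\|_1\leq C(2\delta)^{k}\]
uniformly in $\eta$, and so the total variation distance between the law of $X(n)$ conditional on $E_{n,k}$ and $\nu\otimes\nu$ is at most $C(2\delta)^k$.

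Choosing $k=k(n)\to\infty$ slowly enough that $4^{k(n)}/n\to 0$ (e.g.\ $k(n)=\lfloor\log_2\log n\rfloor$) thus gives $\|\mathcal L(X(n))-\nu\otimes\nu\|_{\mathrm{TV}}\leq C(2\delta)^{k(n)}+O\bigl(\mathbb P(E_{n,k(n)}^c)\bigr)\to 0$ as $n\to\infty$, which proves $X(n)\Rightarrow\nu\otimes\nu$. The main obstacle is the bilinear contraction: the results of Fasino--Tudisco~\cite{FT20} quoted in the introduction only yield contraction of the quadratic map $x\mapsto\mathcal R(x,x)/\sigma$, whereas at each internal node of $\mathcal T_k$ the recursion genuinely propagates two distinct marginals $m_v^L\neq m_v^R$; the finer Dobrushin-type estimate on $(x,y)\mapsto\mathcal R(x,y)/\sigma$ extracted directly from {\bf (E)} is therefore indispensable. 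A secondary (but more routine) difficulty is verifying the tree-likeness of the ancestry with sharp enough probability bounds, for which Janson's theorem~\cite{Jans14} is more than sufficient.
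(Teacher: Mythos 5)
Your overall strategy is the same as the paper's: approximate the ancestry of node $n$ by a complete binary tree of depth $k$, propagate product-form conditional laws up the tree, and contract using Assumption {\bf (E)}. The contraction half of your argument is correct: the Dobrushin-type bilinear estimate $\|\mathcal R(x,y)/\sigma-\mathcal R(x',y')/\sigma\|_1\leq \delta(\|x-x'\|_1+\|y-y'\|_1)$ with $\delta<\tfrac12$ is exactly what {\bf (E)} gives, and it is the same estimate the paper extracts from the Fasino--Tudisco coefficients $\tau_\ell+\tau_r$ in Proposition~\ref{lem:tensor-props} (via the splitting $\mathcal T(x,y)-\mathcal T(u,v)=\mathcal T(x,y-v)+\mathcal T(x-u,v)$), so your claim that the quadratic-map contraction alone would not suffice, while true, does not identify a gap in the paper. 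Your device of conditioning on the leaf labels $\eta$ and proving a bound uniform in $\eta$ is a nice way to avoid having to establish unconditional independence of the leaf labels (the paper's Lemma~\ref{lem:intralayer-indep}).

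However, there is a genuine gap in the definition of your good event $E_{n,k}$. Requiring only that the nodes of $A_k(n)$ (ancestors of generation at most $k$) be pairwise distinct does not prevent an \emph{internal} node $u$ of $\mathcal T_k$, say at generation $j\leq k-1$, from reappearing as an ancestor of one of the leaves at some generation strictly larger than $k$. When this happens, a leaf label $X(w)\in\eta$ is a function of $X(u)$ and of the fresh randomness used along the path from $w$ down to $u$; conditioning on $\eta$ then carries information about $X(u)$ and about the fresh randomness at the internal nodes between $u$ and the root, and the factorisation $m_v^L=\mathcal R(m_{v_L}^L,m_{v_L}^R)/\sigma$ fails (already for $k=2$, if $M_n$ is a deep ancestor of the leaf $M_{D_n}$, the true law of $X(M_n)$ given $\eta$ is tilted away from $\rho(X(M_{M_n}))/\sigma\otimes\rho(X(D_{M_n}))/\sigma$). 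Ruling this out is not a pairwise-collision union bound over $O(4^k)$ explicitly listed ancestors: it requires controlling the \emph{entire} ancestor set of the leaves, which is why the paper works with the events $\mathcal E_n$ (the whole ancestry above $n_1=\lfloor n/\log n\rfloor$ is a tree, Janson's Theorem~3.1 in~\cite{Jans14}) and $\mathcal F_n$ (no generation-$\leq\ell(n)$ ancestor falls below $n_1$, Lemma~\ref{lem:EF}); together these guarantee that no node of $H'_n$ occurs at two different generations. Your claimed rate $O(4^k/n)$ is also optimistic even for the collisions you do list, since generation-$j$ ancestors are of order $nX_1\cdots X_j$ and can be much smaller than $n$ (this costs logarithmic factors), but that is harmless for the conclusion; the missing exclusion of long-range returns is the substantive issue, and repairing it essentially forces you back to the paper's Section~\ref{sec:shape-gn}.
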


The proof of Theorem~\ref{thm:convergence} is done in two independent parts:
In the first part, we prove that nodes $M_{n}$ and $D_{n}$ are typically quite deep in $G_n$ (i.e.\ far from node $0$) and that, in a neighbourhood of node $n$, the DAG is locally a complete binary tree; 
this is done using results of
Janson~\cite{Jans14} on the uniform recursive DAG. 
In the second part, we use the theory of $Z$-eigenvectors of tensors to prove convergence
in distribution of $(X(M_{n}), X(D_{n}))$.

\subsection{Shape of the uniform recursive DAG}\label{sec:shape-gn}
The aim of this section is to understand the genealogy of node $n$ in the uniform random DAG.
We show that up to distance $o(\log\log\log n)$, this genealogy is a complete binary tree.

We look at the genealogy (set of all ancestors) of node $n$ in the uniform random DAG $G_n$ (i.e.\ its ``mum'' and ``dad'', and by
induction, its grandparents, great-grandparents, etc.) and then look at the graph~$H_n$ induced by the ancestors $v$ of $n$
{such that $v \geq n_1 \coloneqq \lfloor n/\log n\rfloor$ 
(i.e.\ we keep only these nodes and the edges between them, and erase the rest of $G_n$).
Janson~\cite{Jans14} shows that, with high probability, $H_n$ is a (binary) tree.
In this section, we complete this result by showing that, if $\ell(n) = o(\log\log\log n)$, then the tree $H'_n$ obtained by erasing all nodes of~$H_n$ that are at (graph) distance more than $\ell(n)$ of $n$ is a complete binary tree (i.e.\ $n$ has degree~2 and all the nodes at distance at most $\ell(n)-1$ of $n$ have degree~3).
We sketch this result on Figure~\ref{fig:shape}.
We let
\begin{align}
\mathcal E_n &= \{H_n\text{ is a binary tree}\},\notag\\
\mathcal F_n &= \{H'_n \text{ is a complete binary tree}\}.\label{eq:def_EF}
\end{align}
\begin{lemma}\label{lem:EF}
If $\ell(n) = o(\log\log\log n)$, then $\lim_{n\uparrow\infty}\mathbb P(\mathcal E_n\cap \mathcal F_n) = 1$.
\end{lemma}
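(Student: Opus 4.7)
I would split the statement into showing $\mathbb{P}(\mathcal{E}_n)\to 1$, which comes directly from Janson~\cite{Jans14}, and, conditionally on $\mathcal{E}_n$, showing that the ball of radius $\ell(n)$ around~$n$ in the binary tree $H_n$ is complete with high probability. On $\mathcal{E}_n$, $H_n$ is a binary tree rooted at~$n$ (with the children of a node $v$ being its DAG-parents $M_v,D_v$), and the only way for $H'_n$ to fail to be a complete binary tree of depth $\ell(n)$ is for some node $v$ at graph-distance $k<\ell(n)$ to have either $M_v=D_v$ (which would give $v$ a single child in $H_n$) or one of $M_v,D_v$ with DAG-index smaller than $n_1$ (which would have removed that parent when restricting $G_n$ to $H_n$). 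So if I denote by $\mathcal{G}_n$ the event that every node explored in a breadth-first ancestor-search from $n$ of depth $\ell(n)$ has two distinct parents of index $\geq n_1$, then $\mathcal{E}_n\cap\mathcal{G}_n\subseteq\mathcal{E}_n\cap\mathcal{F}_n$, and it suffices to prove $\mathbb{P}(\mathcal{G}_n)\to 1$.

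To control $\mathbb{P}(\mathcal{G}_n^c)$, I would analyse the indices along a single root-to-leaf path $n=V_0>V_1>\cdots>V_{\ell(n)}$ in the putative complete tree. Conditionally on $V_k$, $V_{k+1}$ is uniform on $\{0,\ldots,V_k-1\}$, so (up to a harmless floor that can be absorbed by standard stochastic domination) $V_{k+1}/V_k$ behaves like a $\mathrm{Unif}[0,1]$ random variable and $\log(n/V_k)$ is stochastically dominated by a sum of $k$ i.i.d.\ $\mathrm{Exp}(1)$ variables $E_1+\cdots+E_k$. Since $V_k\geq n_1$ iff $\log(n/V_k)\leq \log\log n$, a Chernoff bound with $\lambda=1/2$ gives, for every $k\leq \ell(n)$,
\[
\mathbb{P}(V_k<n_1)\leq \mathbb{P}\bigl(E_1+\cdots+E_k>\log\log n\bigr)\leq (1-\lambda)^{-k}e^{-\lambda\log\log n}=2^{k}(\log n)^{-1/2}.
\]
Likewise, the probability that $M_v=D_v$ for a node of index $v\geq n_1$ is $1/v\leq (\log n)/n$, which is negligible after union-bounding.

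Taking a union bound over the (at most) $2^{\ell(n)}$ root-to-leaf paths in the tree and the $\ell(n)$ steps of each path yields
\[
\mathbb{P}(\mathcal{G}_n^c)\leq 2\cdot 2^{\ell(n)}\cdot 2^{\ell(n)}(\log n)^{-1/2}+2^{\ell(n)+1}\cdot \tfrac{\log n}{n}=O\bigl(4^{\ell(n)}(\log n)^{-1/2}\bigr).
\]
The assumption $\ell(n)=o(\log\log\log n)$ gives $4^{\ell(n)}=\exp(o(\log\log\log n))=(\log\log n)^{o(1)}$, which is negligible compared to $(\log n)^{1/2}$, so $\mathbb{P}(\mathcal{G}_n)\to 1$. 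Combining this with Janson's result $\mathbb{P}(\mathcal{E}_n)\to 1$ and the inclusion above finishes the proof.

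\textbf{Main obstacle.} The delicate point is that the $2^{\ell(n)}$ paths are strongly correlated (they share prefixes), so a clean argument would avoid claiming independence between paths and instead use a straight union bound, relying on the fact that the combinatorial overhead $4^{\ell(n)}$ is swamped by the concentration factor $(\log n)^{-1/2}$ precisely when $\ell(n)\ll\log\log\log n$. Beyond this, one must be careful to rigorously couple the integer-valued chain $(V_k)$ to a product of i.i.d.\ $\mathrm{Unif}[0,1]$'s so that the exponential concentration applies, but this is routine.
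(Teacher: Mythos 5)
Your proposal is correct and follows essentially the same route as the paper: invoke Janson for $\mathbb P(\mathcal E_n)\to 1$, reduce $\mathcal F_n$ to the event that no ancestor within distance $\ell(n)$ has index below $n_1$, represent the indices along an ancestral path as $n$ times a product of i.i.d.\ uniforms (up to floors), and union-bound over the $2^{\ell(n)}$ paths using concentration of $\sum_i\log(1/U_i)$ — the paper uses Chebyshev where you use a Chernoff bound, and both comfortably beat the $2^{\ell(n)}$ (or $4^{\ell(n)}$) overhead when $\ell(n)=o(\log\log\log n)$. One small caution: the floors push $V_k$ \emph{down}, so $\log(n/V_k)$ is dominated from \emph{below} (not above) by the sum of exponentials; the correct fix is the additive correction $V_k\geq n\prod_i U_i-k$, which only shifts the threshold to $\log\bigl(n/(n_1+\ell(n))\bigr)=(1+o(1))\log\log n$ — exactly what the paper does.
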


\begin{figure}
\begin{center}
\includegraphics[width=6cm]{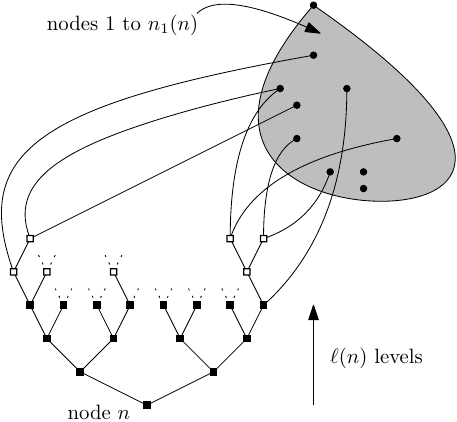}
\end{center}
\caption{A picture of the uniform recursive DAG when $\mathcal E_n\cap\mathcal F_n$ holds. The grey area represents the part of the DAG composed of nodes 1 to $n_1(n) = \lfloor n/\log n\rfloor$ and the edges between them. The nodes outside of the grey area are nodes $n_1(n)+1$ to $n$. The black square nodes are the nodes of tree $H'_n$ (a complete binary tree); the white and black square nodes are the nodes of $H_n$ (a binary tree). The dotted half edges represent edges whose other end is in the grey area - we do not draw them to keep the figure readable.}
\label{fig:shape}
\end{figure}

\begin{proof} By Theorem 3.1 in~\cite{Jans14},  we have $\mathbb P(\mathcal E_n) \to 1$ as $n\uparrow\infty$. In the rest of the proof, we assume that $\mathcal E_n$ occurs.
Recall that, by definition, for all $n\geq 1$, $M(n)$ (and $D(n)$) are two independent, uniform elements of $0, \ldots, n-1$. 
We can model this by sampling a sequence of random variables $(U_n, V_n)_{n\geq 1}$, i.i.d.\ and uniformly distributed on $[0,1]$: for all $n\geq 1$, we set $M(n) = \lfloor U_n n\rfloor$ and $D(n)= \lfloor V_n n\rfloor$.
We let $L(n)$ the set of ancestors of $n$ that are at distance $\ell(n)$ of~$n$.
For any ancestor $u\in L(n)$, in distribution, $u = \floor{\ldots \floor{\floor{nX_1}X_2}\ldots X_{\ell(n)}}$ where $(X_i)_{i\geq 1}$ is a sequence of i.i.d.\ random variables, uniformly-distributed on $[0,1]$. 
And, on the event $\mathcal E_n$, $|L(n)| = 2^{\ell(n)}$.
Thus,
\begin{align}
\Prob*{\exists v \in L_n: v < n_1(n)}
&\leq \sum_{v \in L_n} \Prob*{v < n_1(n)}
=2^{\ell(n)} \Prob*{\floor{\ldots \floor{\floor{nX_1}X_2}\ldots X_{\ell(n)}}<n_1(n)}\notag\\
&\leq 2^{\ell(n)} \Prob*{nX_1\cdots X_{\ell(n)}-\ell(n) < n_1(n)}.\label{eq:truc}
\end{align}
Now note that
\begin{align*}
\Prob*{nX_1\cdots X_{\ell(n)}-\ell(n) < n_1(n)}
&= \mathbb P\bigg(\sum_{i=1}^{\ell(n)} \log (1/{X_i}) > \log\bigg(\frac{n}{n_1(n)+\ell(n)}\bigg)\bigg)\\
&= \mathbb P\bigg(\sum_{i=1}^{\ell(n)} \big(\log (1/X_i)-1\big) > \log\bigg(\frac{n}{n_1(n)+\ell(n)}\bigg)-\ell(n)\bigg).
\end{align*}
Recall that $n_1(n) = \floor{n/\log n}$ and $\ell(n)=o(\log\log n)$. 
Thus, as $n\uparrow\infty$,
\[\log\bigg(\frac{n}{n_1(n)+\ell(n)}\bigg)-\ell(n)
= \log n - \log\bigg(\frac n{\log n}+o(\log \log n)\bigg) + o(\log\log n)
= \log\log n + o(\log\log n).\]
Thus, for all $n$ large enough, this is positive, which implies, by Chebyshev's inequality,
\[\Prob*{nX_1\cdots X_{\ell(n)}-\ell(n) < n_1(n)}
\leq \frac{\ell(n)}{(1+o(1))(\log\log n)^2},\]
because $\mathrm{Var}\big(\sum_{i=1}^{\ell(n)} \log (1/{X_i})\big) = \ell(n)$.
Thus, by~\eqref{eq:truc}, as $n\uparrow\infty$,
\[\Prob*{\exists v \in L_n: v < n_1(n)}
\leq\frac{\ell(n)2^{\ell(n)}}{((1+o(1))\log\log n)^2} \to 0,\]
as $n\uparrow\infty$ because $\ell(n) = o(\log\log\log n)$.
\end{proof}

\section{Random labelling of the DAG}\label{sec:distribution-xn}
In this section, we aim to understand the limit of $X(n)$, the random couple labelling node~$n$ in the $\mathcal R$-labelled DAG, as $n\uparrow\infty$. 
First note that, in the construction of the $\mathcal R$-labelled DAG, one can first sample the unlabelled graph $(G_n)_{n\geq 0}$, and then sample the labels $(X(n))_{n\geq 1}$.

Recall that, as defined in Section~\ref{sec:shape-gn}, 
$H_n$ is the set of all the ancestors $v$ of node~$n$ such that $v \geq n_1$.
Also, $H'_n$ is the sub-graph of $H_n$ in which 
we only keep the nodes that are at graph distance at most $\ell(n)$ of node~$n$.
On $\mathcal E_n\cap \mathcal F_n$, $H'_n$ is a complete binary tree and, in this tree, the label of each node is sampled at random, according
to a distribution that is uniquely determined by the label of its parents:
a ``2-dependent Markov chain''.
We therefore start this section by defining 2-dependent Markov chains on the infinite binary tree, and study their ergodicity.

\subsection{Two-dependent Markov chains on the infinite binary tree}\label{sec:two-dep-mc}
\begin{definition}\label{def:MC}
Let $(\pi_i)_{i\geq 1}$ be a sequence of probability distributions on a finite or countable space $\mathcal S$.
Let $T = (T_{ijk} = T(i,j,k))_{i,j,k \in \mathcal S}$ 
a collection of non-negative real 
numbers satisfying $\sum_{k\in\mathcal S} T_{ijk} = 1$ for all $i,j\in\mathcal S$.
We sample $W^{\sss (0)}_i\sim \pi_i$ for all $i\geq 1$, independently of each other.
We then sample the values of $(W_i^{\sss (n)})_{i\geq 1}$ for all $n\geq 1$ recursively as follows: 
for all $n\geq 1$, for all $i\geq 1$, for all $x, y, w\in \mathcal S$
\[\mathbb P(W_i^{\sss (n)} = w \:\vert\: (W_i^{\sss (n-1)})_{i\geq 1}) 
= T\big(w, W^{\sss (n-1)}_{2i}, W^{\sss (n-1)}_{2i+1}\big).\]
We call this process $(W_i^{\sss (n)})_{i\geq 1, n\geq 0}$ the 2-dependent Markov chain of transition tensor $T$ and initial distribution $(\pi_i)_{i\geq 1}$.
\end{definition}

Note that a 2-dependent Markov chain can be interpreted as a random labelling of the infinite binary tree.
For this interpretation, we index the nodes of the infinite binary tree as follows (see Figure~\ref{fig:bin_tree}): 
for all $n\geq 0$, $i\geq 1$, node $(n,i)$ is the $i$-th node in the $n$-th level.
For all $n\geq 1$, $i\geq 1$, the two parents of node $(n,i)$ are nodes $(n-1, 2i)$ and $(n-1, 2i+1)$.
The value $W_i^{\sss (n)}$ can be interpreted as the label of node $(n,i)$: its distribution given the labels of its two parents is given by $T(\cdot, W_{2i}^{\sss (n-1)}, W_{2i+1}^{\sss (n-1)})$.

\begin{figure}
\begin{center}
\includegraphics[width = 8cm]{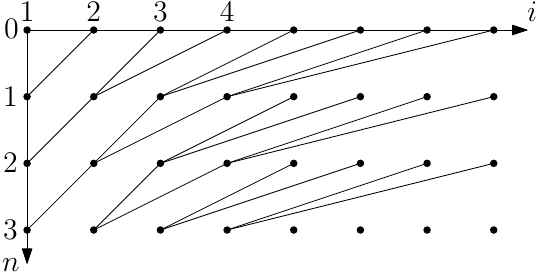}
\end{center}
\caption{The infinite binary tree.}
\label{fig:bin_tree}
\end{figure}

\begin{lemma}\label{lem:xin}
Let $(\pi_i)_{i\geq 1}$ be a sequence of probability distributions on a finite or countable space $\mathcal S$.
Let $T = (T_{ijk} = T(i,j,k))_{i,j,k \in \mathcal S}$ 
a collection of non-negative real 
numbers satisfying $\sum_{k\in\mathcal S} T_{ijk} = 1$ for all $i,j\in\mathcal S$.
Let $(W_i^{\sss (n)})_{i\geq 1, n\geq 0}$ be the 2-dependent Markov chain of transition tensor $T$ and initial distribution $(\pi_i)_{i\geq 1}$. 
For all $n\geq 0, i\geq 1$, we let $\pi_i^{\sss (n)}$ be the distribution of $W_i^{\sss (n)}$.
Then,  for all $i\geq 1$, $\pi_i^{\sss (0)} = \pi_i$ and, for all $n\geq 1$, $i\geq 1$, $w\in\mathcal S$,
\begin{equation}\label{eq:rec_laws}
\pi_i^{\sss (n)} (w) 
= \sum_{x,y\in\mathcal S} T_{wxy} \pi_{2i}^{\sss (n-1)}(x) \pi_{2i+1}^{\sss (n-1)}(y).
\end{equation}
\end{lemma}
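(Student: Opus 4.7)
The plan is a direct induction on $n$, where the only non-trivial ingredient is verifying that the two ``parent labels'' are independent, so that the conditional expectation from Definition~\ref{def:MC} factorises into a product of two marginals.

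For the base case $n = 0$, the identity $\pi_i^{\sss(0)} = \pi_i$ is immediate from the definition, which samples $W_i^{\sss(0)} \sim \pi_i$ independently across $i$.

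For the inductive step, fix $n\geq 1$, $i\geq 1$ and $w\in\mathcal S$. Conditioning on the whole $(n-1)$-th layer $(W_j^{\sss(n-1)})_{j\geq 1}$ and applying the defining relation in Definition~\ref{def:MC} together with the tower property, one obtains
\[
\pi_i^{\sss(n)}(w) = \mathbb E\bigl[T\bigl(w, W_{2i}^{\sss(n-1)}, W_{2i+1}^{\sss(n-1)}\bigr)\bigr] = \sum_{x,y\in\mathcal S} T(w,x,y)\,\mathbb P\bigl(W_{2i}^{\sss(n-1)} = x,\ W_{2i+1}^{\sss(n-1)} = y\bigr).
\]
The next move is to argue that the pair $\bigl(W_{2i}^{\sss(n-1)}, W_{2i+1}^{\sss(n-1)}\bigr)$ has independent coordinates. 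This is where the infinite binary tree interpretation (see Figure~\ref{fig:bin_tree}) is useful: by an auxiliary induction on $n$, each $W_j^{\sss(k)}$ can be realised as a measurable function of the initial variables $(W_\ell^{\sss(0)})_{\ell\geq 1}$ together with an independent family of ``transition coins'' (one at each site $(k',j')$), with the function depending only on the ancestry of site $(k,j)$ in the binary tree. Since the sub-trees rooted at $(n-1, 2i)$ and $(n-1, 2i+1)$ share no vertex, the two labels $W_{2i}^{\sss(n-1)}$ and $W_{2i+1}^{\sss(n-1)}$ are functions of disjoint collections of independent inputs, and hence are independent.

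Plugging this independence into the previous display yields
\[
\pi_i^{\sss(n)}(w) = \sum_{x,y\in\mathcal S} T(w,x,y)\,\pi_{2i}^{\sss(n-1)}(x)\,\pi_{2i+1}^{\sss(n-1)}(y),
\]
which is exactly~\eqref{eq:rec_laws}. The essentially only obstacle is the independence claim; everything else is a straightforward unpacking of Definition~\ref{def:MC} and the tower property. One small care to take is that Definition~\ref{def:MC} only specifies the \emph{marginal} conditional law of each $W_i^{\sss(n)}$ given the previous layer, but the implicit (and standard) reading, needed for the auxiliary induction, is that the labels at level $n$ are conditionally independent given level $n-1$, which is precisely the role played by the independent per-site transition coins in the above construction.
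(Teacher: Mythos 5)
Your proof is correct; the paper states Lemma~\ref{lem:xin} without proof, treating it as an immediate consequence of Definition~\ref{def:MC}, and your induction via the tower property is exactly the intended argument spelled out. You also rightly flag the one genuinely non-trivial point, namely that the factorisation $\mathbb P(W_{2i}^{\sss(n-1)}=x,\,W_{2i+1}^{\sss(n-1)}=y)=\pi_{2i}^{\sss(n-1)}(x)\,\pi_{2i+1}^{\sss(n-1)}(y)$ requires the within-layer conditional independence that the definition leaves implicit, and your disjoint-ancestry argument is the same device the paper itself uses later in Lemma~\ref{lem:intralayer-indep}.
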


Note that, if we let $\mathcal T : \mathcal S\times \mathcal S \to \mathcal S$ be the 3-mode tensor associated to $T =  (T_{ijk})_{i,j,k \in \mathcal S}$ (as we did in~\eqref{eq:def_tensor} for $(R_{ijk})_{1\leq i,j,k\leq d}$),
and if we interpret the distributions $\pi_i^{\sss (n)}$ as vectors whose entries are indexed by $\mathcal S$, then~\eqref{eq:rec_laws} can be written as 
\[\pi_i^{\sss (n)} = \mathcal T\big(\pi_{2i}^{\sss (n-1)}, \pi_{2i+1}^{\sss (n-1)}\big).\]
With this in mind, it is natural to define invariant distributions as follow:
\begin{definition}\label{def:inv_distr}
We say that $\pi = (\pi(w))_{w\in S}$ is an invariant distribution of the 3-mode tensor $\mathcal T$ if and only if $\pi = \mathcal T(\pi, \pi)$.
\end{definition} 

It is now natural to ask under which conditions a 2-dependent Markov chain of transition tensor~$T$ admits an invariant distribution and converges to it. 
To do so,  we introduce the notion of \emph{ergodicity coefficients}~\cite{Sen06}. The ($L^1$-norm) ergodicity coefficient of a stochastic matrix whose rows and columns are indexed by $\mathcal S$ is defined as
\begin{equation}\label{def:tau}
\tau(\mathfrak p) = \sup_{x,y\in\Delta(\mathcal S)} \frac{\|\mathfrak p x - \mathfrak p y\|_1}{\|x-y\|_1},
\end{equation}
where 
\[\Delta(\mathcal S) = \Set{x = (x_i)_{i\in\mathcal S}\in [0,1]^{\abs*{\mathcal S}}\given \sum_{i\in \mathcal S} x_i = 1}.\]
If $\tau(\mathfrak p)<1$, then $x\mapsto \mathfrak px$ admits a unique fixed point $x^*$ in $\Delta(\mathcal S)$ and, for all $x_0\in \Delta(\mathcal S)$, 
if $x_{n+1} = \mathfrak p x_n$ for all $n\geq 0$, then $x_n \to x^*$ as $n\uparrow\infty$. 

To define the ergodicity coefficient of a $3$-mode tensor, Fasino and Tudisco~\cite{FT20} start by defining the following matrices: Given a $3$-mode tensor $T = (T_{ijk})_{i,j,k\in\mathcal S}$ and a vector $x\in\Delta(\mathcal S)$, we let, for all $i,j\in\mathcal S$,
\begin{equation}\label{eq:def_Rlr}
(T^{(\ell)}_x)_{i,j} = \sum_{k=1}^d T_{ikj}x_k \qquad (T^{(r)}_x)_{i,j} = \sum_{k=1}^d T_{ijk}x_k.
\end{equation}
(The superscripts stand for ``left'' and ``right''.)
If $T$ is a stochastic tensor (i.e.\ if $\sum_{i\in\mathcal S} T_{ijk} = 1$ for all $j,k\in\mathcal S$), then $T^{(\ell)}_x$ and $T^{(r)}_x$
are stochastic matrices for all $x\in\Delta{(\mathcal S)}$.
We let
  \begin{equation}\label{eq:def_taus}
  \tau_\ell(T) \coloneqq{} \max_{x \in \Delta(\mathcal S)} \tau(T^{(\ell)}_x)
    \quad\text{ and }\quad 
     \tau_r(T) \coloneqq{} \max_{x \in \Delta(\mathcal S)} \tau(T^{(r)}_x).
     \end{equation}
By~\cite[Thm 5.1, Cor 5.2]{FT20},
\[
  \tau_\ell(T) ={} \frac12 \max_{j,j',k} \sum_i \abs*{T_{ijk} - T_{ij'k}}   \quad\text{ and }\quad 
  \tau_r(T) ={} \frac12 \max_{j,k,k'} \sum_i \abs*{T_{ijk} - T_{ijk'}}
\]
which implies the following result:
\begin{lemma}\label{lem:AssE}
For any stochastic tensor $T = (T_{ijk})_{i,j,k\in\mathcal S}$, the two following statements are equivalent:
\begin{itemize}
\item[\rm (i)] $\ds\max_{j,k,k'\in\mathcal S} \sum_{i\in\mathcal S} \abs*{T_{ijk} - T_{ijk'}} 
+ \max_{j,j',k\in\mathcal S} \sum_{i\in\mathcal S} \abs*{T_{ijk} - T_{ij'k}} < 2$;
\item[\rm (ii)] $\tau_\ell(T) + \tau_r(T) < 1$.
\end{itemize}
\end{lemma}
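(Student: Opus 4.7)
The plan is to observe that this lemma is essentially a direct algebraic consequence of the two explicit formulas for $\tau_\ell(T)$ and $\tau_r(T)$ that were just quoted from~\cite[Thm~5.1, Cor~5.2]{FT20}, namely
\[
\tau_\ell(T) = \tfrac12 \max_{j,j',k}\sum_i |T_{ijk}-T_{ij'k}|
\quad\text{and}\quad
\tau_r(T) = \tfrac12 \max_{j,k,k'}\sum_i |T_{ijk}-T_{ijk'}|.
\]
Thus all one has to do is add these two identities and multiply by~$2$: the left-hand side of statement~(i) equals $2\tau_\ell(T)+2\tau_r(T)$, so statement~(i) becomes $2(\tau_\ell(T)+\tau_r(T))<2$, which is exactly statement~(ii). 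Since the manipulation is reversible, the equivalence is immediate.

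The only thing worth being slightly careful about is that the formulas from~\cite{FT20} require $T$ to be a stochastic tensor in the sense $\sum_i T_{ijk} = 1$ for all $j,k$, which is precisely our hypothesis, and that the coefficients $\tau(T_x^{(\ell)}), \tau(T_x^{(r)})$ are well-defined (this in turn uses that $T_x^{(\ell)}, T_x^{(r)}$ are stochastic matrices for every $x\in\Delta(\mathcal S)$, as noted in the text just before~\eqref{eq:def_taus}). Provided these formulas are taken as given, there is no real obstacle — the lemma is just a restatement, convenient because assumption~{\bf (E)} on the replacement tensor~$\mathcal R$ (after normalisation by~$\sigma$) is phrased in the form of~(i), while the ergodicity results we will invoke from~\cite{FT20} are phrased in terms of $\tau_\ell+\tau_r<1$.

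In short, the proof is a one-line computation:
\[
\tau_\ell(T)+\tau_r(T) < 1
\iff
\max_{j,j',k}\sum_i |T_{ijk}-T_{ij'k}| + \max_{j,k,k'}\sum_i |T_{ijk}-T_{ijk'}| < 2,
\]
which establishes both implications simultaneously. The ``hard part'' has already been done upstream, inside the cited results of Fasino and Tudisco.
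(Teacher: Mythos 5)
Your proposal is correct and is exactly the paper's argument: the paper states the explicit formulas $\tau_\ell(T)=\tfrac12\max_{j,j',k}\sum_i|T_{ijk}-T_{ij'k}|$ and $\tau_r(T)=\tfrac12\max_{j,k,k'}\sum_i|T_{ijk}-T_{ijk'}|$ from \cite[Thm 5.1, Cor 5.2]{FT20} and presents the lemma as an immediate consequence, with no further proof. Your one-line computation (add the two identities, multiply by $2$, note the manipulation is reversible) is precisely what is intended.
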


Finally, the following lemma gives conditions under which a 2-dependent Markov chain admits an invariant distribution, and converges to it.
\begin{proposition}\label{lem:tensor-props}
Let $T = (T_{ijk})_{i,j,k\in\mathcal S}$ be a stochastic tensor satisfying condition {\rm (i)} of Lemma~\ref{lem:AssE}.
Then there exists a unique $\pi\in \Delta(\mathcal S)$ 
such that $\mathcal T(\pi, \pi) = \pi$. 
Furthermore, 
any sequence $(\pi_i^{\sss (n)})_{n\geq 0, i\geq 1}$ of vectors of $\Delta(\mathcal S)$ that satisfies~\eqref{eq:rec_laws} is such that, for all $n\geq 0$,
\[\norm{\pi_1^{\sss (n)}-\pi}{1} \leq q^n \max_{i\geq 1} \norm{\pi_i^{\sss (0)} - \pi}{1},\]
for $q = \tau_\ell(T) + \tau_r(T)<1$.
Therefore, $\lim_{n \to \infty} \pi_1^{\sss (n)} = \pi$.
\end{proposition}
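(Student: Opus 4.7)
The plan is to reduce the bilinear fixed-point problem for $\mathcal T$ to a standard Banach contraction argument for the map $\Phi\colon \Delta(\mathcal S)\to\Delta(\mathcal S)$ defined by $\Phi(x) = \mathcal T(x,x)$. Since $T$ is stochastic, a direct computation gives $\sum_{i} \mathcal T_i(x,y) = \sum_{j,k} x_j y_k \sum_i T_{ijk} = 1$ for all $x,y\in\Delta(\mathcal S)$, so $\Phi$ does map $\Delta(\mathcal S)$ into itself, and $\Delta(\mathcal S)$ equipped with the $L^1$ distance is a complete metric space.

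The central step is a telescoping argument that turns the bilinear map $\mathcal T$ into an action of the stochastic matrices $T_x^{(\ell)}$ and $T_x^{(r)}$ defined in~\eqref{eq:def_Rlr}. Unpacking the definitions, one checks that $\mathcal T(x,y) = T_x^{(\ell)}\, y = T_y^{(r)}\, x$, hence for any $x,x',y,y'\in\Delta(\mathcal S)$,
\[
\mathcal T(x,y) - \mathcal T(x',y') = T_y^{(r)}(x - x') + T_{x'}^{(\ell)}(y - y').
\]
Because $x-x'$ and $y-y'$ are zero-sum vectors and $T_y^{(r)}, T_{x'}^{(\ell)}$ are stochastic matrices, the definition~\eqref{def:tau} of the ergodicity coefficient gives $\|T_y^{(r)}(x-x')\|_1 \leq \tau(T_y^{(r)})\|x-x'\|_1$ and similarly for the second summand. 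Taking suprema over $y$ and $x'$ and using~\eqref{eq:def_taus} yields the key Lipschitz-type bound
\[
\|\mathcal T(x,y) - \mathcal T(x',y')\|_1 \;\leq\; \tau_r(T)\,\|x-x'\|_1 + \tau_\ell(T)\,\|y-y'\|_1.
\]

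Specialising this to $y=x$ and $y'=x'$ immediately gives $\|\Phi(x)-\Phi(x')\|_1 \leq q\|x-x'\|_1$ with $q = \tau_\ell(T)+\tau_r(T) < 1$ by hypothesis (Lemma~\ref{lem:AssE}). The Banach fixed-point theorem then furnishes a unique $\pi\in\Delta(\mathcal S)$ with $\Phi(\pi)=\pi$, i.e.\ $\mathcal T(\pi,\pi) = \pi$.

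For the quantitative convergence, I set $M_n := \sup_{i\geq 1} \|\pi_i^{\sss(n)} - \pi\|_1$ and apply the Lipschitz bound to the recursion~\eqref{eq:rec_laws} with $x=\pi_{2i}^{\sss(n-1)}$, $y=\pi_{2i+1}^{\sss(n-1)}$, and $x'=y'=\pi$: this gives, uniformly in $i\geq 1$,
\[
\|\pi_i^{\sss(n)} - \pi\|_1 \leq \tau_r(T)\,\|\pi_{2i}^{\sss(n-1)}-\pi\|_1 + \tau_\ell(T)\,\|\pi_{2i+1}^{\sss(n-1)}-\pi\|_1 \leq q\, M_{n-1},
\]
so $M_n \leq q\, M_{n-1}$ and by induction $M_n \leq q^n M_0$, which is the claimed inequality (applied at $i=1$). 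The final convergence $\pi_1^{\sss(n)} \to \pi$ follows since $q<1$ and $M_0$ is bounded (in fact $M_0 \leq 2$ because all involved vectors lie in $\Delta(\mathcal S)$). The only subtle point worth being careful about is verifying that $x-x'$ being a zero-sum vector is exactly what allows the ergodicity coefficient to act as a Lipschitz constant for a stochastic matrix; this is standard but deserves an explicit sentence, especially since $\mathcal S$ is allowed to be countably infinite and the supremum in~\eqref{eq:def_taus} is then attained in the limit rather than at an actual maximiser.
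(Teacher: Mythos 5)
Your proof is correct, and it follows the paper's argument almost verbatim in its second half: the telescoping identity $\mathcal T(x,y)-\mathcal T(x',y') = T^{(r)}_y(x-x') + T^{(\ell)}_{x'}(y-y')$ is the same decomposition the paper uses (with the two telescoping steps taken in the opposite order, which is immaterial), and the induction on $M_n = \sup_i\|\pi_i^{\sss(n)}-\pi\|_1$ is exactly the paper's induction. The one genuine difference is in the existence--uniqueness step: the paper simply cites \cite[Thms~6.1 and~6.3]{FT20} for the existence of a unique fixed point $\pi$ of $\mathcal T(\cdot,\cdot)$ on the diagonal, whereas you derive it self-containedly by observing that $\Phi(x)=\mathcal T(x,x)$ is a $q$-contraction of the complete metric space $(\Delta(\mathcal S),\|\cdot\|_1)$ and invoking the Banach fixed-point theorem. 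This is a clean shortcut: the Lipschitz bound you need for the convergence-rate estimate already gives the contraction property of $\Phi$ for free, so nothing beyond \cite[Thm~5.1, Cor~5.2]{FT20} (which enters only through Lemma~\ref{lem:AssE}, i.e.\ to translate condition (i) into $q<1$) is required. Your closing remarks are also the right ones to make explicit: $\Phi$ maps $\Delta(\mathcal S)$ into itself because $T$ is stochastic, the ergodicity coefficient controls the action of a stochastic matrix precisely on zero-sum vectors such as $x-x'$, and for countable $\mathcal S$ the $\max$ in~\eqref{eq:def_taus} should be read as a supremum. No gaps.
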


Note that, by Lemma~\ref{lem:xin}, $\lim_{n \to \infty} \pi_1^{\sss (n)} = \pi$ implies that $W_1^{\sss (n)} \Rightarrow \pi$ in distribution as $n\uparrow\infty$, where $(W_i^{\sss (n)})_{n\geq 0, i\geq 1}$ is the 2-dependent Markov chain of transition tensor $T = (T_{ijk})_{i,j,k\in\mathcal S}$.

\begin{proof}
Under condition (i) of Lemma~\ref{lem:AssE}, $\tau_\ell(T) + \tau_r(T) < 1$ implies the existence of a unique fixed point $\pi$ of $\mathcal
T$ via~\cite[Thms 6.1 and 6.3]{FT20}. Further, for all $x, y, u, v \in \Delta(\mathcal S)$, we have
  \begin{align*}
    \mathcal T(x,y) - \mathcal T(u,v)     
    ={}& \mathcal T(x,y) - \mathcal T(x,v) + \mathcal T(x,v) - \mathcal T(u,v) \\
    ={}& \mathcal T(x,y-v) + \mathcal T(x-u,v) 
    =T^{(\ell)}_x(y-v) + T^{(r)}_{v}(x-u).
  \end{align*}
  (See~\eqref{eq:def_Rlr} for the definition of the matrices $T^{(\ell)}_x$ and $T^{(r)}_{v}$; also recall that, by definition, for all $x,y\in\mathbb R^{\mathcal S}$, for all $i\in \mathcal S$, $\mathcal T(x,y)_i = \sum_{j,k\in\mathcal S} T_{ijk} x_jy_k$.)
This implies that, for all $x, y \in \Delta(\mathcal S)$,
  \begin{align*}
    \norm{\mathcal T(x,y) - \pi}{1} ={}
    & \norm{T^{(\ell)}_x(y-\pi) + T^{(r)}_{\pi}(x-\pi)}{1}\\
    \leq{}& \norm{T^{(\ell)}_x(y-\pi)}{1} + \norm{T^{(r)}_{\pi}(x-\pi)}{1} \\
    \leq{}& \tau(T^{(\ell)}_x)\norm{y-\pi}{1} + \tau(T^{(r)}_{\pi})\norm{x-\pi}{1},
    \end{align*}
    by definition of $\tau(\cdot)$ (see~\eqref{def:tau}).
    Now using the definition of $\tau_\ell(T)$ and $\tau_r(T)$ (see~\eqref{eq:def_taus}), we get
    \begin{align}
     \norm{\mathcal T(x,y) - \pi}{1}
    \leq{}& \tau_\ell(T)\norm{y-\pi}{1} + \tau_r(T)\norm{x-\pi}{1} \notag\\
    \leq{}& (\tau_\ell(T) + \tau_r(T))\max(\norm{y-\pi}{1}, \norm{x-\pi}{1})\notag\\
    ={}& q\max(\norm{y-\pi}{1}, \norm{x-\pi}{1}),\label{eq:contraction}
  \end{align}
  where we have set $q = \tau_\ell(T) + \tau_r(T)$.
  We now prove inductively on $n\geq 0$, that for all $n\geq 0$, for all $i\geq 1$,
  \begin{equation}\label{eq:induction_hyp}
  \norm{\pi_i^{\sss (n)}-\pi}{1} \leq q^n \max_{j\geq 1} \norm{\pi_j^{\sss (0)} - \pi}{1}.
\end{equation}
  The base case holds straighforwardly; 
  for the induction, note that, for all $n\geq 1$, for all $i\geq 1$,
  because $\pi = \mathcal T(\pi, \pi)$ and~\eqref{eq:rec_laws}, we have
  \begin{align*}
    \norm{\pi_i^{\sss (n)}-\pi}{1} 
    ={}& \norm{\mathcal T\big(\pi_{2i}^{\sss (n-1)},\pi_{2i+1}^{\sss (n-1)}\big)-\mathcal T(\pi,\pi)}{1} \\
    \leq{}& q \max \big( \norm{\pi_{2i}^{\sss (n-1)}-\pi}{1}, \norm{\pi_{2i+1}^{\sss (n-1)}-\pi}{1} \big),
    \end{align*}
    by~\eqref{eq:contraction}.
    Now, by the induction hypothesis, we get
    \begin{align*}
    \norm{\pi_i^{\sss (n)}-\pi}{1}
    \leq{}& q \max \big( q^{n-1} \max_{j\geq 1} \norm{\pi_j^{\sss (0)}-\pi}{1}, q^{n-1}\max_{j\geq 1}\norm{\pi_j^{\sss (0)}-\pi}{1} \big) 
    \leq q^n \max_{j\geq 1}\norm{\pi_j^{\sss (0)}-\pi}{1},
  \end{align*}
  which concludes the proof of~\eqref{eq:induction_hyp}. The result follows by taking $i=1$ in~\eqref{eq:induction_hyp}.
\end{proof}


\subsection{The randomly labelled DAG is ``locally'' a 2-dependent Markov chain}
In this section, we consider the $\mathcal R$-labelled DAG as defined in Section~\ref{sub:coupling}.
First recall that the shape of the $\mathcal R$-labelled DAG is that of a uniform recursive DAG, 
As in Section~\ref{sec:shape-gn}, 
$H_n$ is the set of all the ancestors $v$ of node~$n$ such that $v \geq n_1$.
Also, $H'_n$ is the sub-graph of $H_n$ in which 
we only keep the nodes that are at graph distance at most $\ell(n)$ of node~$n$.
On $\mathcal E_n\cap \mathcal F_n$ (see~\eqref{eq:def_EF}), which occurs with high probability when $n$ is large (see Lemma~\ref{lem:EF}), $H'_n$ is a complete binary tree.

For all $k\geq 0$, 
we let $L(k)$ be the set of all nodes in $H_n$ at distance $k$ of node $n$. 
(Note that $L(k)$ is empty for all $k$ large enough.)

\begin{lemma}\label{lem:intralayer-indep}
Conditionally on $G_{n_1(n)}$ and $(X(i))_{0\leq i\leq n_1(n)}$,
and on the event $\mathcal E_n\cap \mathcal F_n$, for all $1\leq k\leq \ell(n)-1$,
$(X(i))_{i\in L(k)}$ is a sequence of independent random variables.
\end{lemma}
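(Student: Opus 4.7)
The plan is to strengthen the conditioning by also conditioning on the whole DAG $G_n$; this is harmless since the event $\mathcal E_n\cap\mathcal F_n$, the sub-graphs $H_n,H'_n$ and the random set $L(k)$ are all $\sigma(G_n)$-measurable. Under this refined conditioning, $H_n$ is a fixed binary tree rooted at $n$, and $\mathcal F_n$ ensures that, for each $i\in L(k)$ with $1\le k\le\ell(n)-1$, both parents of $i$ in $G_n$ lie in $L(k+1)\subset H_n$. We denote by $T_i$ the subtree of $H_n$ consisting of $i$ together with all ancestors of $i$ that lie in $H_n$, and by $\partial T_i$ the set of parents of nodes of $T_i$ whose index is at most $n_1(n)$ (the grey area).

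The first key observation is combinatorial: the subtrees $(T_i)_{i\in L(k)}$ are pairwise vertex-disjoint. Indeed, two distinct $i,i'\in L(k)$ sharing a node $v\in T_i\cap T_{i'}$ would produce two distinct undirected paths from $n$ to $v$ in $H_n$ (one through $i$, one through $i'$), contradicting the tree property of $H_n$. The second ingredient is a standard rewriting of the label generation as a deterministic function of independent randomness: given $G_n$ and $(X(j))_{0\le j\le n_1(n)}$, the remaining labels can be generated in chronological order via
\[X(j)\;=\;f_j\bigl(X(M_j),X(D_j),\xi_j\bigr),\qquad j>n_1(n),\]
where $f_j$ realises the conditional distribution of $X(j)$ prescribed by the construction of the $\mathcal R$-labelled DAG (with the convention that the initial distribution $\pi$ is used in place of the label of node $0$), and where $(\xi_j)_{j>n_1(n)}$ is a family of i.i.d.\ uniform $[0,1]$ random variables, independent of everything else. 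Iterating this relation upward along $T_i$ and stopping each branch of the recursion when a parent enters the grey area, one can then write
\[X(i)\;=\;F_i\Bigl((\xi_j)_{j\in T_i},\,(X(g))_{g\in\partial T_i}\Bigr).\]

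Under the refined conditioning, the boundary labels $(X(g))_{g\in\partial T_i}$ are deterministic constants, so each $X(i)$ is a measurable function of the block $(\xi_j)_{j\in T_i}$; by the disjointness of the blocks $(T_i)_{i\in L(k)}$, the family $(X(i))_{i\in L(k)}$ then consists of measurable functions of pairwise disjoint chunks of an i.i.d.\ family, and is therefore jointly independent, which is exactly the conditional independence asserted by the lemma. The main technical delicacy of the plan lies in the conditioning itself: neither $\mathcal E_n\cap\mathcal F_n$ nor the set $L(k)$ is $\sigma(G_{n_1(n)})$-measurable, and the lemma is best read as a conditional independence statement given the finer $\sigma$-algebra $\sigma(G_n,X(0),\dots,X(n_1(n)))$ on the event $\mathcal E_n\cap\mathcal F_n$; once this reading is adopted, the argument is purely combinatorial and requires no further probabilistic input.
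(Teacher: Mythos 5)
Your proposal is correct and follows essentially the same route as the paper: on $\mathcal E_n$ the ancestor subtrees of distinct nodes of $L(k)$ inside $H_n$ are pairwise disjoint, and the labels within each subtree are generated from fresh independent randomness together with the (conditioned-upon) labels of the grey-area nodes at its boundary, so the labels on $L(k)$ are functions of disjoint blocks of independent inputs. Your explicit representation via i.i.d.\ uniforms and your remark that the conditioning is really on $\sigma(G_n, X(0),\dots,X(n_1(n)))$ are just a more careful spelling-out of what the paper leaves implicit.
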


\begin{proof}
Let $u, v\in L(k)$. On $\mathcal E_n\cap\mathcal F_n$, $H_u$ and $H_v$ are two disjoint sub-trees of $H_n$. The label of $u$ is sampled by applying the following procedure:
\begin{itemize}
\item First, complete the tree by adding leaves so that every node in $H_u$ has degree~3.
\item By definition of the $\mathcal R$-labelled DAG, these leaves are merged with nodes chosen uniformly and independently at random in $G_{n_1(n)}$.
\item Now, given the labels of the leaves of $H_u$, we sample the labels of all nodes in $H_u$ recursively using the fact that, if a node's parents are labelled by the couples $x = (x_1, x_2)$ and $y = (y_1, y_2)$, 
then its label is distributed as $\rho(x)\otimes\rho(y)$, and independent of the rest.
\end{itemize}
The label of $v$ is sampled similarly, and because $H_u$ and $H_v$ do not intersect, this can be done independently for each of the two trees, implying that $X(u)$ and $X(v)$ are independent, as claimed.
\end{proof}

\begin{lemma}\label{lem:link_MC}
Fix $n\geq 0$; we work conditionally on $G_{n_1(n)}$ and $(X(i))_{1\leq i\leq n_1(n)}$, and on the event $\mathcal E_n\cap\mathcal F_n$.
We order the nodes in $L(\ell(n)-1)$ from left to right so that $H'_n$ is a plane binary tree.
For all $1\leq i\leq 2^{\ell(n)-1}$, we let $\pi_i^{\sss (0)}$ be the distribution of the label of the $i$-th node in level $L(\ell(n)-1)$ (from left to right); if $i>2^{\ell(n)-1}$, we let $\pi_i^{\sss (0)}$ be a fixed, and arbitrary distribution on $\{1, \ldots, d\}^2$.

Then, in distribution, $X(n) = W_1^{\sss (n)}$, where $(W_i^{\sss (n)})_{n\geq 0, i\geq 1}$ is the 2-dependent Markov chain on $\{1, \ldots, d\}^2$, of transition tensor $T$ given by
\begin{equation}\label{eq:def_T}
T_{(i_1,i_2)(j_1,j_2)(k_1,k_2)} = R_{i_1 j_1 j_2} R_{i_2 k_1 k_2}/\sigma^2,
\end{equation}
for all $i_1,i_2,j_1,j_2,k_1,k_2 \in \{1, \ldots, d\}$.
\end{lemma}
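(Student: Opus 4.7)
The plan is to verify directly that, on the event $\mathcal E_n\cap\mathcal F_n$, the joint law of the labels $(X(v))_{v\in H'_n}$ coincides with that of the recursive construction defining the $2$-dependent Markov chain: the whole content of the lemma is to identify the transition tensor and to check that the DAG's labelling rule can be read off, layer by layer, on a complete plane binary tree.

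First I would make the combinatorial identification. On $\mathcal E_n\cap\mathcal F_n$, Lemma~\ref{lem:EF} gives that $H'_n$ is a complete binary tree of depth $\ell(n)$ rooted at $n$, so every node $v$ at distance $k\leq \ell(n)-1$ from $n$ has both its DAG-parents $M_v, D_v$ in $L(k+1)\subseteq H'_n$. Ordering the $2^{\ell(n)-1}$ nodes of $L(\ell(n)-1)$ from left to right as in the statement turns the subtree of $H'_n$ of depth $\ell(n)-1$ rooted at $n$ into a plane binary tree that is isomorphic, up to a relabelling of the leaves, to the finite subtree rooted at the appropriate $W_1^{\sss (\cdot)}$ in the infinite binary tree of Figure~\ref{fig:bin_tree}. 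Under this identification, the $i$-th leaf from the left is matched with the index at which the initial distribution $\pi_i^{\sss (0)}$ is assigned.

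Next I would derive the transition tensor. By the definition of the $\mathcal R$-labelled DAG, for any node $v\geq 1$ with parents $M_v, D_v\geq 1$, the label $X(v)=(\tilde C_1,\tilde C_2)$ is built from two conditionally independent draws, $\tilde C_1\sim \rho(X(M_v))/\sigma$ and $\tilde C_2\sim \rho(X(D_v))/\sigma$. By Assumption~\textbf{(B)} these are probability measures on $\{1,\ldots,d\}$, and~\eqref{eq:replacement_measures} gives, for $X(M_v)=(j_1,j_2)$, $X(D_v)=(k_1,k_2)$ and any $(i_1,i_2)\in\{1,\ldots,d\}^2$,
\[\Prob*{X(v)=(i_1,i_2) \given X(M_v), X(D_v)} = \frac{R_{i_1 j_1 j_2}}{\sigma}\cdot\frac{R_{i_2 k_1 k_2}}{\sigma},\]
which is precisely the stochastic tensor $T$ of~\eqref{eq:def_T} on the state space $\{1,\ldots,d\}^2$.

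Finally I would close the argument using Lemma~\ref{lem:intralayer-indep}. That lemma provides, conditionally on $G_{n_1(n)}$, $(X(i))_{0\leq i\leq n_1(n)}$ and $\mathcal E_n\cap\mathcal F_n$, the intra-level independence at every level of $H_n$. Combined with the one-step formula above, an induction on $k$ running from $k=\ell(n)-1$ down to $k=0$ then shows that the joint law of $\{X(v):v\in L(k),\ 0\leq k\leq \ell(n)-1\}$ is exactly the law of the corresponding layers of the $2$-dependent Markov chain of tensor $T$ started from independent boundary variables with marginal laws $\pi_i^{\sss (0)}$. Specialising to the root $n$ yields the claimed equality in distribution, and the auxiliary distributions $\pi_i^{\sss (0)}$ for $i>2^{\ell(n)-1}$ play no role in $W_1^{\sss (\cdot)}$. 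The main (minor) obstacle is purely notational: carefully transferring the left-to-right ordering on $L(\ell(n)-1)$ into the $2i, 2i+1$ parent convention of the infinite binary tree. No genuine computation is needed beyond the one-step conditional identity above and the intra-level independence of Lemma~\ref{lem:intralayer-indep}.
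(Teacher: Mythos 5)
Your proposal is correct and follows exactly the route of the paper's (one-sentence) proof: it combines the intra-layer independence of Lemma~\ref{lem:intralayer-indep} with the definition of the $\mathcal R$-labelled DAG, from which the one-step conditional law $\Prob*{X(v)=(i_1,i_2)\given X(M_v)=(j_1,j_2),X(D_v)=(k_1,k_2)}=R_{i_1j_1j_2}R_{i_2k_1k_2}/\sigma^2$ identifies the tensor $T$ of~\eqref{eq:def_T}. The layer-by-layer induction you add is precisely the detail the paper declares ``straightforward,'' so there is nothing to flag.
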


\begin{proof}
This is straightforward from Lemma~\ref{lem:intralayer-indep} (which gives independence of the labels of the leaves - needed in the definition of a 2-dependent Markov chain), and from the definition of the $\mathcal R$-labelled DAG (which gives the replacement tensor, see Section~\ref{sub:coupling}).
\end{proof}

\begin{lemma}\label{lem:EimpliesT}
If $R = (R_{ijk})_{1\leq i,j,k\leq d}$ satisfies Assumption {\bf (E)}, then
$T$ satisfies Assumption {\rm (i)} of Lemma~\ref{lem:AssE}.
\end{lemma}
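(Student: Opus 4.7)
The plan is a direct computation: I would expand the two sums appearing in condition~(i) of Lemma~\ref{lem:AssE} using the product structure of $T$ given by~\eqref{eq:def_T}, and then factor each expression so that one factor collapses to $\sigma$ via the balance assumption~{\bf (B)} while the other becomes a quantity that~{\bf (E)} forces to be strictly less than $\sigma$.

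More concretely, I would write indices in $\mathcal S = \{1,\ldots,d\}^2$ as pairs, $i=(i_1,i_2)$, $j=(j_1,j_2)$, $k=(k_1,k_2)$, $k'=(k'_1,k'_2)$. For the first sum in condition~(i), since the factor $R_{i_1 j_1 j_2}$ is independent of~$k$, I factor out to get
\[
  \sum_{i\in\mathcal S} \abs*{T_{ijk}-T_{ijk'}}
  = \frac{1}{\sigma^2}\Bigl(\sum_{i_1=1}^d R_{i_1 j_1 j_2}\Bigr)\Bigl(\sum_{i_2=1}^d \abs*{R_{i_2 k_1 k_2}-R_{i_2 k'_1 k'_2}}\Bigr).
\]
By {\bf (B)}, the first factor equals $\sigma$, and by {\bf (E)} the second is strictly less than $\sigma$ (uniformly in $k,k'$), so the total is strictly less than $1$. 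The second sum in condition~(i) is treated in exactly the same way, factoring out $R_{i_2 k_1 k_2}$ instead, yielding another quantity strictly less than~$1$.

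Adding the two bounds gives a total strictly less than~$2$, which is precisely condition~(i) of Lemma~\ref{lem:AssE}. The only subtlety worth flagging is that the bound obtained is uniform: it holds for every choice of the indices $j,j',k,k' \in \mathcal S$ simultaneously, because {\bf (E)} is a uniform statement over the base colours $\{1,\ldots,d\}$, and the pair indexing in $\mathcal S$ does not introduce extra suprema that could spoil strictness. There is no real obstacle here — the calculation is essentially a one-line consequence of the rank-one-in-each-factor structure of $T$ combined with balance and ergodicity of~$R$.
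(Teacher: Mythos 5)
Your proof is correct and follows essentially the same route as the paper: expand $T$ via its product structure, factor out the term that collapses to $\sigma$ by {\bf (B)}, and bound the remaining sum of absolute differences by {\bf (E)}. The only cosmetic difference is that you bound each of the two maxima by $1$ separately before adding, whereas the paper adds first and invokes {\bf (E)} once; the computation is the same.
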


\begin{proof}
Let $i,j,j',k,k' \in \{1,\ldots, d\}^2 = \mathcal{S}$. In what follows, we write $i = (i_1, i_2)$ (and similarly for $j, j', k, k'$), where $i_1, i_2 \in \{1, \ldots, d\}$.
We have
\begin{align*}
\max_{j,k,k'\in\mathcal S} \sum_{i\in\mathcal S} \abs*{T_{ijk} - T_{ijk'}} 
&=\frac1{\sigma^2}\max_{j,k,k'\in\mathcal S} \sum_{i\in\mathcal S} \abs*{R_{i_1 j_1 j_2} R_{i_2 k_1 k_2} - R_{i_1 j_1 j_2} R_{i_2 k'_1 k'_2}}\\
&=\frac1{\sigma^2}\max_{j,k,k'\in\mathcal S} \sum_{i\in\mathcal S} \abs*{R_{i_1 j_1 j_2} (R_{i_2 k_1 k_2} - R_{i_2 k'_1 k'_2})}\\
&=\frac1{\sigma^2}\max_{j,k,k'\in\mathcal S} \sum_{i_2=1}^d \abs*{R_{i_2 k_1 k_2} - R_{i_2 k'_1 k'_2}}\sum_{i_1=1}^d R_{i_1 j_1 j_2}\\
&= \frac1{\sigma}\max_{k,k'\in\mathcal S} \sum_{i_2=1}^d \abs*{R_{i_2 k_1 k_2} - R_{i_2 k'_1 k'_2}},
\end{align*}
where we have used Assumption {\bf (B)}.
A similar calculation gives
\[\max_{j,j',k\in\mathcal S} \sum_{i\in\mathcal S} \abs*{T_{ijk} - T_{ij'k}}
\leq  \frac1{\sigma}\max_{j,j'\in\mathcal S} \sum_{i_1=1}^d \abs*{R_{i_1 j_1 j_2} - R_{i_1 j'_1 j'_2}}.\]
In total, we thus get that
\[\max_{j,k,k'\in\mathcal S} \sum_{i\in\mathcal S} \abs*{T_{ijk} - T_{ijk'}} + \max_{j,j',k\in\mathcal S} \sum_{i\in\mathcal S} \abs*{T_{ijk} - T_{ij'k}}
\leq \frac2{\sigma} \max_{j,j'\in\mathcal S} \sum_{i=1}^d \abs*{R_{i j_1 j_2} - R_{i j'_1 j'_2}} 
< 2,\]
by Assumption {\bf (E)}. This concludes the proof.
\end{proof}

\subsection{Proof of Theorem~\ref{thm:convergence}}\label{sub:proof}
Together with Proposition~\ref{lem:tensor-props}, Lemma~\ref{lem:EimpliesT} implies that
there exists a unique $\pi\in \Delta(\{1, \ldots d\}^2)$ such that $\mathcal T(\pi, \pi) = \pi$, 
and $W_1^{\sss (n)}$ converges in distribution to $\pi$.
We thus get, by Lemma~\ref{lem:link_MC}, that $X(n)\Rightarrow \pi$ in distribution as $n\uparrow\infty$, conditionally on $G_{n_1(n)}$ and $(X(i))_{1\leq i\leq n_1(n)}$, and on the event $\mathcal E_n\cap\mathcal F_n$.
This means that, for all $1\leq i,j\leq d$,
\[\frac1{\mathbb P(\mathcal E_n\cap \mathcal F_n)}\cdot\mathbb P\big(\{X(n) = (i,j)\}\cap \mathcal E_n\cap \mathcal F_n \mid G_{n_1(n)}, (X(i))_{1\leq i\leq n_1(n)}\big) \to \pi_{i,j},\]
as $n\uparrow\infty$. (This is because the event $\mathcal E_n\cap\mathcal F_n$ is independent of $G_{n_1(n)}$ and $(X(i))_{1\leq i\leq n_1(n)}$, by definition.)
Now, by Lemma~\ref{lem:EF}, $\mathbb P(\mathcal E_n\cap \mathcal F_n) \to 1$ as $n\uparrow\infty$.
Hence,
\[\mathbb P\big(\{X(n) = (i,j)\}\cap \mathcal E_n\cap \mathcal F_n \mid G_{n_1(n)}, (X(i))_{1\leq i\leq n_1(n)}\big) \to \pi_{i,j},\]
which, by the tower rule, implies
\[\mathbb P\big(\{X(n) = (i,j)\}\cap \mathcal E_n\cap \mathcal F_n\big) \to \pi_{i,j}.\]
Using again the fact that $\mathbb P(\mathcal E_n\cap \mathcal F_n) \to 1$ as $n\uparrow\infty$, we get
\[\mathbb P\big(X(n) = (i,j)\big) \to\pi_{i,j}.\]
Hence, to prove Theorem~\ref{thm:convergence}, it thus only remains to prove that $\pi = \nu\otimes \nu$, where $\nu$ is the unique solution
of $\sigma\nu = \mathcal R(\nu, \nu)$ in $\Delta_d$, 
where we interpret $\nu$ as a $d$-dimensional vector $\nu = (\nu(\{1\}), \ldots, \nu(\{d\}))$. 
We know that this solution exists and is unique by Assumption {\bf (E)}: 
Indeed, Assumption {\bf (E)} implies that the 3-mode tensor $R/\sigma$ satisfies condition (i) of Lemma~\ref{lem:AssE}, which, by Proposition~\ref{lem:tensor-props}, implies that $\nu = \mathcal R(\nu, \nu)/\sigma$ admits a unique solution in $\Sigma_d$.

To prove that $\pi = \nu\otimes \nu$, it is enough to prove that $\pi' := \nu\otimes \nu$ 
satisfies $\pi' = \mathcal T(\pi', \pi')$ (because the solution of this equation is unique, by Proposition~\ref{lem:tensor-props}).
First recall that we interpret the measure $\nu$ as a vector $(\nu_i)_{1\leq i\leq d}$. Similarly, we interpret $\pi'$ as a vector whose coordinates are $\pi'_{(i,j)} = \nu_i\nu_j$, for all $1\leq i,j,\leq d$.
By definition of $\mathcal T$, for all $x = (x_1, x_2)\in \{1, \ldots, d\}^2$,
\begin{align*}
\mathcal T(\pi', \pi')_x 
&= \sum_{(y_1, y_2),(z_1,z_2)\in\{1, \ldots, d\}^2} T_{xyz} \pi'_{(y_1, y_2)} \pi'_{(z_1, z_2)}\\
&= \frac1{\sigma^2}\sum_{(y_1, y_2),(z_1,z_2)\in\{1, \ldots, d\}^2} R_{x_1y_1y_2}R_{x_2z_1z_2} 
\pi'_{(y_1, y_2)} \pi'_{(z_1, z_2)},
\end{align*}
by~\eqref{eq:def_T}.
We thus get, by definition of $\pi' = \nu\otimes\nu$ (i.e.\ $\pi'_{(i,j)} = \nu_i\nu_j$, for all $1\leq i,j,\leq d$),
\begin{align*}
\mathcal T(\pi', \pi')_x
&= \frac1{\sigma^2} \bigg(\sum_{1\leq y_1, y_2\leq d}R_{x_1y_1y_2}\nu_{y_1}\nu_{y_2}\bigg)
\bigg(\sum_{1\leq z_1, z_2\leq d}R_{x_2z_1z_2}\nu_{z_1}\nu_{z_2}\bigg)\\
&= \frac{\mathcal R(\nu, \nu)_{x_1}}\sigma\cdot \frac{\mathcal R(\nu, \nu)_{x_2}}\sigma
= \nu_{x_1}\nu_{x_2} = \pi'_x,
\end{align*}
because $\mathcal R(\nu, \nu) = \sigma\nu$, and by definition of $\pi' = \nu\otimes \nu$.
We thus get that $\pi' = \mathcal T(\pi', \pi')$, which, by uniqueness, implies that $\pi = \pi' = \nu\otimes\nu$, as desired. This concludes the proof of Theorem~\ref{thm:convergence}.

\subsection{Proof of Theorem~\ref{thm:main-result}}
We have proved in Section~\ref{sub:coupling} that Theorem~\ref{thm:convergence} implies Theorem~\ref{thm:main-result}$(ii)$. Part $(i)$ is proved in Section~\ref{sub:proof}.

\section{Drawing more than two balls at a time}\label{sec:multi}
So far, we have restricted ourselves to drawing two balls at every time step in the urn process. This was mainly to keep the proofs as simple as possible, but extending the results and the proofs to drawing $m$ balls at each time step is straightforward. In this section, we explain how to do so.

\begin{definition}\label{def:m-multi-urn}
Let $R = (R_{i_1\ldots i_{m+1}} = R(i_1, \ldots, i_{m+1}))_{1\leq i_1, \ldots, i_m\leq d}$ be a collection of non-negative integers.
We define the {m-drawing d-colour Pólya urn with replacement tensor R}
as a Markov process $(U(n) = (U_1(n), \ldots, U_d(n)))_{n\geq 0}$ on $\mathbb N^d$ that satisfies, for all $n\geq 0$,
\[U_i(n+1) = U_i(n) + R(i, C_1(n), \ldots, C_m(n)),\]
where, given $U(n)$, $(C_1(n), \ldots, C_m(n))$ is a vector of i.i.d.\ random variables such that, for all $1\leq k\leq d$, 
\[\mathbb P(C_1(n) = k) = \frac{U_k(n)}{\|U(n)\|_1}.\]
\end{definition}
Let $\mathcal S = \{1,\ldots,d\}^m$. 
We assume that the replacement tensor $R$ satisfies the following assumptions:
\begin{itemize}
\item[{\bf (T)}] For all $1\leq i_1, \ldots, i_{m+1}\leq d$, $R_{ i_1, \ldots, i_{m+1}} \geq 0$.
\item[{\bf (B)}] There exists $\sigma \in (0,\infty)$ such that, for all $1\leq i_1, \ldots, i_m\leq d$, $\sum_{i=1}^d R_{ii_1\ldots i_m} = \sigma$.
\item[{\bf (E)}] $\max_{j,j' \in S} \sum_{i=1}^d \abs*{R_{ij_1\ldots j_m} - R_{ij'_1\ldots j'_m}} < 2\sigma/m$.
\end{itemize}

We define $\mathcal R = \mathbb R^d \times\cdots\times \mathbb R^d\to\mathbb R^d$ as follows: for all $x = (x_1, \ldots, x_m)\in\mathbb R^d \times\cdots\times \mathbb R^d$,
\[\mathcal R_i(x) = \sum_{1\leq i_1, \ldots, i_m\leq d} R_{ii_1\ldots i_m}x_1\cdots x_m,\]
and $\mathcal R(x) = (\mathcal R_1(x), \ldots, \mathcal R_d(x))$.
The multi-linear map $\mathcal R$ is called an $(m+1)$-mode tensor.

\begin{theorem}\label{thm:m-main-result}
  Let $(U(n))_n$ be an $m$-drawing $d$-colour balanced Pólya urn with replacement tensor $R$. 
  If $R$ satisfies Assumptions {\bf (T)}, {\bf (B)}, and {\bf (E)}, then
there exists a unique $x^*\in\Delta_d$ satisfying $\sigma x^* = \mathcal R(x^*, \ldots, x^*)$ and, in probbaility as $n\uparrow\infty$,
  \[\frac{U(n)}{\sum_{i=1}^d U_i(n)} \to x^*.\]
\end{theorem}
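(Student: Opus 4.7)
The plan is to imitate, step by step, the proof of \cref{thm:main-result}, replacing every binary object by its $m$-ary analogue. Since the authors tell us that ``no additional idea is needed'', the goal is to trace the four-step scheme (coupling with a labelled DAG, local shape of the DAG, ergodicity of $m$-dependent Markov chains, identification of the fixed point) and verify that each ingredient survives.

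First I would generalise the coupling of \cref{sub:coupling}: define an $\mathcal R$-labelled uniform recursive DAG $(G_n, X(n))_{n\geq 0}$ in which every new node $n+1$ chooses $m$ parents $P_1(n),\ldots,P_m(n)$ independently and uniformly in $\{0,\ldots,n\}$, and whose label $X(n+1)=(\tilde C_1(n),\ldots,\tilde C_m(n))\in\{1,\ldots,d\}^m$ is sampled coordinate-by-coordinate with $\tilde C_\ell(n)\sim \rho(X(P_\ell(n)))/\sigma$, where $\rho(j_1,\ldots,j_m)=\sum_i R_{i j_1\cdots j_m}\delta_i$. The induction of \cref{sub:coupling} carries over verbatim to show that $m_n=\pi+\sum_{i=1}^n \rho(X(i))$ has the law of the $m$-drawing urn, and the reduction via \cref{lem:randmeas-conv} (applied to the $m$-fold product $\hat m_n^{\otimes m}$) reduces the theorem to the convergence in distribution $X(n)\Rightarrow \nu^{\otimes m}$.

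Next I would upgrade \cref{lem:EF}: the genealogy $H_n$ of node $n$ truncated to vertices $\ge \lfloor n/\log n\rfloor$ is, with high probability, an $m$-ary tree (by the $m$-ary version of Janson's Theorem~3.1 in \cite{Jans14}), and the sub-tree $H'_n$ at distance at most $\ell(n)$ from $n$ is a complete $m$-ary tree. The union-bound calculation from \cref{lem:EF} replaces $2^{\ell(n)}$ by $m^{\ell(n)}$, which remains $o((\log\log n)^2/\ell(n))$ as long as $\ell(n)=o(\log\log\log n)$. Then a straightforward generalisation of \cref{lem:intralayer-indep} and \cref{lem:link_MC} shows that, on this event, $X(n)$ has the same distribution as the root label of an $m$-dependent Markov chain on the complete infinite $m$-ary tree, with transition tensor
\[
T_{(i_1,\ldots,i_m)(j^{(1)})\cdots(j^{(m)})} \;=\; \frac{1}{\sigma^m}\prod_{\ell=1}^m R_{i_\ell\, j^{(\ell)}_1\cdots j^{(\ell)}_m}.
\]

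The third step is to state and prove the $m$-ary generalisation of \cref{lem:tensor-props}. For an $(m+1)$-mode stochastic tensor $T$, one defines for each $k\in\{1,\ldots,m\}$ a slot-$k$ ergodicity coefficient $\tau_k(T)=\tfrac12\max\sum_i |T_{i j^{(1)}\cdots j^{(k)}\cdots j^{(m)}}-T_{i j^{(1)}\cdots (j^{(k)})'\cdots j^{(m)}}|$. The contraction identity $\mathcal T(x_1,\ldots,x_m)-\mathcal T(y_1,\ldots,y_m)=\sum_{k=1}^m \mathcal T(y_1,\ldots,y_{k-1},x_k-y_k,x_{k+1},\ldots,x_m)$ (which is just multilinearity) yields, by the same triangle-inequality argument as in \cref{lem:tensor-props},
\[
\|\mathcal T(x_1,\ldots,x_m)-\pi\|_1 \;\le\; \Big(\sum_{k=1}^m \tau_k(T)\Big)\,\max_k\|x_k-\pi\|_1.
\]
Thus if $q:=\sum_k\tau_k(T)<1$, the fixed point is unique and the tree recursion converges to it at geometric rate~$q$. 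The verification that Assumption \textbf{(E)} implies $q<1$ is the core algebraic check: using Assumption \textbf{(B)} to collapse the $m-1$ ``untouched'' factors to $1$,
\[
\sum_{i\in\mathcal S} |T_{ij^{(1)}\cdots j^{(k)}\cdots j^{(m)}}-T_{i j^{(1)}\cdots (j^{(k)})'\cdots j^{(m)}}|
\;=\; \frac{1}{\sigma}\sum_{i_k=1}^d |R_{i_k j^{(k)}_1\cdots j^{(k)}_m}-R_{i_k (j^{(k)}_1)'\cdots (j^{(k)}_m)'}|
\;<\;\frac{2}{m},
\]
so $\tau_k(T)<1/m$ and $q<1$.

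Finally, as in \cref{sub:proof}, I would apply the same contraction principle to the stochastic tensor $R/\sigma$ on $\{1,\ldots,d\}$ (Assumption \textbf{(E)} gives the analogous bound with $m$ slots) to obtain existence and uniqueness of $x^*\in\Delta_d$ with $\sigma x^*=\mathcal R(x^*,\ldots,x^*)$, and then verify by a direct product computation that $\pi':=\nu^{\otimes m}$ with $\nu=x^*$ satisfies $\mathcal T(\pi',\ldots,\pi')=\pi'$; uniqueness of the fixed point of $\mathcal T$ forces $\pi=\nu^{\otimes m}$. Combined with \cref{lem:EF} (via the tower property as in \cref{sub:proof}), this gives $X(n)\Rightarrow \nu^{\otimes m}$ and hence, through \cref{lem:randmeas-conv}, the convergence of $U(n)/\|U(n)\|_1$ to $x^*$ in probability.

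The main obstacle, beyond the bookkeeping of $m$-mode indices, is the $m$-slot extension of the Fasino--Tudisco framework: one needs the multilinear contraction identity with the sum $\sum_k \tau_k(T)$, which is not stated explicitly in \cite{FT20} but follows from the same argument as the two-slot case, and one must keep track of the factor $m$ hidden in the constant $2\sigma/m$ of Assumption \textbf{(E)}.
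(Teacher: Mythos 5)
Your proposal follows the paper's own proof sketch essentially verbatim: the same $m$-parent labelled DAG coupling, the same $m$-ary version of \cref{lem:EF}, the same product transition tensor $T_{x a^{(1)}\cdots a^{(m)}}=\sigma^{-m}\prod_{\ell}R_{x_\ell a^{(\ell)}_1\cdots a^{(\ell)}_m}$ with the slot-wise ergodicity coefficients summing to less than $1$ under Assumption \textbf{(E)}, and the same identification $\pi=\nu^{\otimes m}$ via uniqueness of the fixed point. The one point the paper flags that you also correctly identify as the real work is extending the Fasino--Tudisco characterisation of the slot coefficients from two to $m$ slots.
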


The proof of Theorem~\ref{thm:m-main-result} is a straightforward extension of the proof of Theorem~\ref{thm:main-result}. We briefly explain how the proof can be extended to this case.

The construction of the labelled DAG extends naturally, with each node having $m$ parents instead of $2$.
The events $\mathcal E_n$ and $\mathcal F_n$ become
\begin{align*}
\mathcal E_n &= \{H_n\text{ is an $m$-ary tree}\},\\
\mathcal F_n &= \{H'_n \text{ is a complete $m$-ary tree}\}.
\end{align*}
It is straightforward to prove that Lemma~\ref{lem:EF} also holds in this context (the fact that $\mathbb P(\mathcal E_n) \to 1$ as $n
\uparrow \infty$ is proved in~\cite{Jans14}).
Thus, Lemma~\ref{lem:intralayer-indep} extends straightforwardly,
and we get, conditionally on $G_{n_1(n)}$ and $(X(i))_{1\leq i\leq n_1(n)}$, and on the event $\mathcal E_n\cap \mathcal F_n$, $X(n) = W_1^{\sss (n)}$ in distribution, 
where $(W_i^{\sss (n)})_{n\geq 0, i\geq 1}$ is an $m$-dependent Markov chain taking values in $\{1, \ldots, d\}^m$ (defined on the infinite $m$-ary tree, as in Definition~\ref{def:MC} for the binary case).
The replacement tensor of this $m$-dependent Markov chain is given by
\[T_{x a^{\sss (1)}\ldots a^{\sss (m)}}
= \frac1{\sigma^m} \prod_{i=1}^{m} R_{x_i a_1^{\sss (i)} \ldots a_m^{\sss (i)}},\]
for all $x, a^{\sss (1)},\ldots, a^{\sss (m)}\in \mathcal S = \{1, \ldots, d\}^m$.
One can check that, if $R$ satisfies {\bf (E)}, then 
\begin{align*}
&\max_{a^{\sss (1)}, b^{\sss (1)}, a^{\sss (2)}, \ldots, a^{\sss (m)}\in\mathcal S}
    \sum_{x\in \mathcal S}
    \abs*{T_{xa^{\sss (1)}a^{\sss (2)}\ldots a^{\sss (m)}} - T_{xb^{\sss (1)}a^{\sss (2)}\ldots a^{\sss (m)}}} \\
&\hspace{1cm}+ \cdots + 
\max_{a^{\sss (1)}, a^{\sss (2)}, \ldots, a^{\sss (m)}, b^{\sss (m)} \in \mathcal S} 
\sum_{x\in \mathcal S}
    \abs*{T_{xa^{\sss (1)}\ldots a^{\sss (m-1)} a^{\sss (m)}} - T_{xa^{\sss (1)}\ldots a^{\sss (m-1)} b^{\sss (m)}}} 
< 2.
\end{align*}
Similarly to Proposition~\ref{lem:tensor-props}, this implies that $W_1^{\sss (n)}$ converges in distribution to $\pi$, the unique vector in
$\Delta(\mathcal S)$ satisfying $\pi = \mathcal T(\pi, \ldots, \pi)$.
To get existence and uniqueness of this vector, one needs to extend the proof of~\cite[Thm 5.1, Cor 5.2]{FT20}; this can be done using~\cite[Thm 4.4]{FT20} and via similar arguments as the ones presented in the proof of~\cite[Thm 5.1]{FT20}.

Finally, by {\bf (E)}, there exists a unique vector $\nu$ in $\Delta_d$ satisfying $\nu = \mathcal R(\nu, \ldots, \nu)$. As in Section~\ref{sub:proof}, one can check that $\pi = \nu\otimes \cdots \otimes \nu$ ($m$ times), and conclude the proof similarly as for the binary case.

\section{Examples}\label{sec:examples}

To denote the replacement tensor of a $2$-colour, $2$-drawing P\'olya urn, we  write
\[R =
  \begin{pmatrix}[cc|cc]
    R_{111} & R_{211} & R_{121} & R_{221}\\
    R_{112} & R_{212} & R_{122} & R_{222}
  \end{pmatrix},
\]
where we recall that $R_{ijk}$ is the number of balls of colour $i$ added in the urn when the colours of the couple of balls picked is $(j,k)$.

\subsection{Simple examples}

In this first section, we check that our main result gives the expected result (or non result) for three simple cases.

\begin{example}
For our first example, we consider the following $2$-colour, $2$-drawing case:
\[R =
  \begin{pmatrix}[cc|cc]
    2 & 0 & 1 & 1\\
    1 & 1 & 0 & 2
  \end{pmatrix}
\]
i.e.\ $R_{ijk} = {\bf 1}_{i=j} + {\bf 1}_{i=k}$. In other words, if we pick a couple of balls of colour $(j,k)$, then we add one ball of colour $j$ and one ball of colour $k$ into the urn. Intuitively, this is a generalisation of the (classical) P\'olya urn with identity replacement matrix (at every time step, we add a ball of the same colour as the one picked at that step). For this classical P\'olya urn, it is well known that $U(n)$ converges almost surely to a random variable. So we do not expect Theorem~\ref{thm:main-result} to apply.
Indeed, Assumption {\bf (E)} is not satisfied as
  \[\max_{j,j'\in\mathcal S} \sum_{i=1}^d \abs*{R_{i j_1 j_2} - R_{i j'_1 j'_2}} = 4 > 2 = \sigma.\]
\end{example}

\begin{example}
We now consider the following $2$-colour, $2$-drawing case: 
\[R =
    \begin{pmatrix}[cc|cc]
      1 & 1 & 1 & 1\\
      1 & 1 & 1 & 1\\
    \end{pmatrix},
  \]
  i.e.\ $R_{ijk} = 1$ for all $1\leq i,j,k\leq 2$. In words, this means that, at every time step, we add $1$ ball of colour $1$ and one ball of colour $2$, independently from the draw. This case is a thus a bit trivial: one expects that $U(n)/n \to (\nicefrac12, \nicefrac12)$ almost surely as $n\uparrow\infty$.
Indeed, Theorem~\ref{thm:main-result} applies: Assumptions {\bf (T)} and {\bf (B)} hold trivially, and 
  \[\max_{j,j'\in\mathcal S} \sum_{i=1}^d \abs*{R_{i j_1 j_2} - R_{i j'_1 j'_2}} = 0 < 2 = \sigma,\]
so {\bf (E)} holds as well.
Now note that, by definition, 
$\mathcal R(x, y)_i = \sum_{j=1}^2\sum_{k=1}^2 x_j y_k = (x_1+x_2)(y_1+y_2)$, and
\[\mathcal R(\nu, \nu) = \nu\quad \Leftrightarrow\quad
\nu_1 = \nu_2 = (\nu_1+\nu_2)^2 \quad \Leftrightarrow\quad
\nu_1 = \nu_2 = 4\nu_1^2 \quad \Leftrightarrow\quad
\nu_1 = \nu_2 = \nicefrac12.
\]
So Theorem~\ref{thm:main-result} gives that $U(n)/n\to (\nicefrac12, \nicefrac12)$ {in probability} as $n\uparrow\infty$, as
expected.
\end{example}

\begin{example}
  Consider a matrix for a single-drawing Pólya urn
  \[A =
    \begin{pmatrix}
      a_{11} & a_{12} \\ a_{21} & a_{22}
    \end{pmatrix}.
  \]
  This can be transformed into a multi-drawing Pólya urn that only considers the first draw for its replacement as follows:
  \[R_{ij_1j_2\ldots j_m} = a_{ij_1}.\]
  The ergodicity coefficient for a single-drawing Pólya urn~\cite{FT20} is given by
  \[\max_{j,j' \in \{0,\ldots,d\}} \sum_{i=1}^d \abs*{a_{ij}-a_{ij'}}.\]
  We recover this coefficient when applying Assumption \textbf{(E)}:
  \[\max_{j,j' \in \mathcal S} \sum_{i=1}^d \abs*{R_{ij_1\ldots j_m} - R_{ij_1'\ldots j'_m}} = \max_{j_1,j_1' \in \{1,\ldots,d\}}
    \sum_{i=1}^d \abs*{a_{ij_1} - a_{ij_1'}}.\]
  However, the upper bound given by Assumption \textbf{(E)}, $2 \sigma/m$, is stronger than the upper bound for a single-drawing Pólya urn,
  which is just $2 \sigma$.
  A possible explanation for this gap is that, even though only the first draw matters for the distribution of the balls present in the urn,
  our result considers the distribution of all $m$ draws and their joint convergence.
\end{example}

\subsection{Affine urns and non-symmetric urns}\label{sub:affine}

\begin{example}
In~\cite{KM}, 
Kuba and Mahmoud prove convergence theorems for $2$-colour, 
$m$-drawing balanced ``affine'' P\'olya urns.
We focus here in the $2$-drawing case ($m=2$).
They say that such an urn is affine if 
\[R =
    \begin{pmatrix}[cc|cc]
      a_0 & \sigma-a_0 & a_1 & \sigma-a_1 \\
      a_1 & \sigma-a_1 & a_2 & \sigma-a_2
    \end{pmatrix},
  \]
where there exists an integer $h$ such that $a_i = a_0 + ih$, for all $i\in\{1,2\}$. 
To apply our results, we assume that $0\leq a_0, a_1, a_2 \leq \sigma$ (which is not required in~\cite{KM}), which implies $h < \sigma$.
Then both Assumptions {\bf (T)} and {\bf (B)} hold.
Now, Assumption {\bf (E)} is equivalent to
\[2\max\{|a_1-a_0|,|a_2-a_0|,|a_2-a_1|\} < \sigma
\quad\Leftrightarrow\quad
4h<\sigma.\]
Under this extra assumption, not required in~\cite{KM}, we get that
$U(n)/n \to x^*$ {in probability} as $n\uparrow\infty$, where $x^* = \mathcal R(x^*, x^*)/\sigma$.
In this case, for all $x\in \Delta_2$,
\[\mathcal R(x,x) = 
\begin{pmatrix}
a_0 x_1^2 + 2a_1 x_1x_2 + a_2 x_2^2\\
\sigma - \big(a_0 x_1^2 + 2a_1 x_1x_2 + a_2 x_2^2\big).
\end{pmatrix}
\]
One can check that the unique solution of $x^* = \mathcal R(x^*, x^*)/\sigma$ is given by
$x^* = \big(\frac{a_0 +2h}{\sigma+2h}, 1- \frac{a_0 +2h}{\sigma+2h}\big)$.
This recovers the result of~\cite[Proposition~5]{KM}, albeit under stronger assumptions, and with convergence in probability instead of almost-surely.
Note that, in the case of balanced affine urns, martingale theory can be applied to get stronger results 
(and it is thus not surprising that our result gives a weaker result than the results specific to affine urns in the literature): 
Kuba and Mahmoud~\cite{KM} get second order convergence results, Kuba and Sulzbach~\cite{KS17} prove a law of the iterated logarithm, and Sparks, Kuba, Balaji and Mahmoud~\cite{SKBM25} prove second order results for the $d$-colour case, $d\geq 2$.
\end{example}

A strength of our approach is that the order in which the balls were drawn from the urn may matter for the replacement tensor $R$, which has
not been considered in existing literature on multi-drawing Pólya urns.

\begin{example}
  Consider a two-colour, two-drawing Pólya urn with the following replacement tensor:
  \[R =
    \begin{pmatrix}[cc|cc]
      1 & 2 & 1 & 2\\
      2 & 1 & 1 & 2
    \end{pmatrix}.
  \]
  This urn is non-symmetric, as when we draw first a ball of type $1$, then a ball of type $2$, we replace $2$ balls of the first and one of
  the second type, whereas this is reversed if the order of the draws is reversed. The urn fulfills Assumptions \textbf{(T)} and
  \textbf{(B)}, with $\sigma = 3$.

  Assumption \textbf{(E)} is also satisfied since
  \[\max_{j,j'\in\mathcal S} \sum_{i=1}^d \abs*{R_{i j_1 j_2} - R_{i j'_1 j'_2}} = 2 < 3 = \sigma,\]
  so a unique solution $x^* \in \Delta_2$ for $\mathcal R(x,x)/\sigma = x$ exists that the proportion of balls in the urn will converge to
  in distribution. Solving the fixed-point equation $x^* = \mathcal R(x^*, x^*)/\sigma$ yields $x^* = (\sqrt{2}-1,2-\sqrt{2})$ as the unique
  solution in $\Delta_2$.
\end{example}

\begin{example}
  Consider a two-colour, two-drawing Pólya urn with the following replacement tensor:
  \[R = 
    \begin{pmatrix}[cc|cc]
      0 & 5 & 1 & 4\\
      2 & 3 & 2 & 3
    \end{pmatrix}.
  \]
  This urn is again non-symmetric and fulfills Assumptions \textbf{(T)} and
  \textbf{(B)}, with $\sigma = 5$.

  Assumption \textbf{(E)} is also satisfied since
  \[\max_{j,j'\in\mathcal S} \sum_{i=1}^d \abs*{R_{i j_1 j_2} - R_{i j'_1 j'_2}} = 4 < 5 = \sigma,\]
  so a unique solution $x^* \in \Delta_2$ for $\mathcal R(x,x)/\sigma = x$ exists that the proportion of balls in the urn will converge to
  in distribution. The unique solution to the fixed-point equation $x^* = \mathcal R(x^*, x^*)/\sigma$ is given by $x^* = (\sqrt{11}-3,4-\sqrt{11})$.
\end{example}

\subsection{Examples from~\cite{LMS18}}\label{sub:tunisian}

In~\cite{LMS18}, Lasmar, Mailler and Selmi consider the following multi-drawing model: At each timestep $n$, draw $m$ balls from the urn and
denote by $\xi_{n,i}$ the number of balls of colour $i$ drawn. If the urn has $d$ colours, $\xi_n$ is a $d$-dimensional vector with $\sum_i
\xi_{n,i} = m$. Then, $\hat R(\xi_n)_i$ balls of type $i$ are replaced into the urn for each colour $i$. Again, the urn is called balanced, if for
all such $\xi_n$, $\sum_i \hat R(\xi_{n,i}) = \sigma$. In fact, this model is subsumed in our approach, as we can transfer this replacement
function $\hat R$ into a tensor $(R_{ij_1\ldots j_m})_{ij_1\ldots j_m = 1}^d$ as follows:
\[R_{ij_1\ldots j_m} = \hat R(v)_i, \quad \text{with} \quad v_k = \sum_{\ell=1}^m \1_{j_\ell = k}.\]
The set of all such vectors $v$ is also the set of all possible draws in the model of Lasmar, Mailler and Selmi and defined as
\[\Sigma_m^{(d)} \coloneqq \Set{x \in \N^d \given \sum_{i=1}^d x_i = m}.\]

In the analysis of Lasmar, Mailler and Selmi~\cite{LMS18}, the set of zeros of the following function
\begin{align*}
  & h: \Set{x \in [0,1]^d \given \sum_{i=1}^d x_i = 1} \to \Set{x \in \R^d \given \sum_{i=1}^d x_i = 0} \\
  & h(x) = \sum_{v \in \Sigma_m^{(d)}} \binom{m}{v_1,\ldots,v_d} (\prod_{k=1}^d x_k^{v_k}) (\hat R(v) - \sigma x) \addtag \label{eq:h-lms18}
\end{align*}
plays a central role. We consider the $i$th component of $h(x)$ and rewrite for $x \in \Set{x \in [0,1]^d \given \sum_i x_i = 1}$,
\begin{align*}
  h_i(x) \coloneqq{}& \sum_{v \in \Sigma_m^{(d)}} \binom{m}{v_1,\ldots,v_d} (\prod_{k=1}^d x_k^{v_k}) (\hat R_i(v) - \sigma x_i) \\
  ={}& \sum_{v \in \Sigma_m^{(d)}} \binom{m}{v_1,\ldots,v_d} (\prod_{k=1}^d x_k^{v_k}) \hat R_i(v)
       - \sigma x_i \underbrace{\sum_{v \in \Sigma_m^{(d)}} \binom{m}{v_1,\ldots,v_d} (\prod_{k=1}^d x_k^{v_k})}_{=1 \text{ (multinom. distr.)}}\\
  ={}& \sum_{v \in \Sigma_m^{(d)}} \binom{m}{v_1,\ldots,v_d} (\prod_{k=1}^d x_k^{v_k}) \hat R_i(v) - \sigma x_i.
\end{align*}

Our analysis requires the study of
\[\sigma x = \mathcal R(x,\ldots,x) \iff \mathcal R(x,\ldots,x) - \sigma x \equiv 0, \addtag \label{eq:R-fp}\]
for $x \in \Set{x \in [0,1]^d \given \sum_{i=1}^d x_i = 1}$. Expanding the tensor multiplication for the $i$th component leads us to
\begin{align*}
  \mathcal R(x,\ldots,x)_i -\sigma x ={}& \sum_{j_1,\ldots,j_m=1}^d R_{ij_1,\ldots j_m}x_{j_1}\ldots x_{j_m} - \sigma x \\
  \intertext{Via the earlier definition of $R$, the counts recorded in $v$ with $v_k = \sum_{\ell=1}^m \1_{j_\ell = k}$ uniquely determine the value of $R_{ij_1\ldots j_m}$. The number of possible index choices to achieve a count vector $v$ is given by the multinomial coefficient.}
  ={}& \sum_{v \in \Sigma_m^{(d)}} \binom{m}{v_1,\ldots,v_m} \hat R(v)_i x_1^{v_1} \ldots x_d^{v_d}  - \sigma x \\
  ={}& \sum_{v \in \Sigma_m^{(d)}} \binom{m}{v_1,\ldots,v_m} \hat R(v)_i \prod_{k=1}^d x_k^{v_k} - \sigma x.
\end{align*}

We see that Equations~\eqref{eq:h-lms18} and \eqref{eq:R-fp} reduce to the same expression for $x \in \Set{x \in \R^d \given \sum_{i=1}^d x_i = 1}$ and therefore have
the same solution set in this space. We highlight this by looking at some of the examples presented in~\cite{LMS18}.

\begin{example}
Lasmar, Mailler and Selmi~\cite{LMS18} consider the 2-drawing P\'olya urn with replacement tensor
  \[R =
    \begin{pmatrix}[cc|cc]
      1 & 2 & 2 & 1\\
      2 & 1 & 1 & 2
    \end{pmatrix}.
  \]
  This is a non-affine urn with $\sigma = 3$, fulfilling Assumptions \textbf{(T)} and \textbf{(B)}. 
 Assumption \textbf{(E)} is also satisfied since
  \[\max_{j,j'\in\mathcal S} \sum_{i=1}^d \abs*{R_{i j_1 j_2} - R_{i j'_1 j'_2}} = 2 < 3 = \sigma,\]
  so a unique solution $x^* \in \Delta_d$ for $\mathcal R(x,x)/\sigma = x$ exists. 
  Solving the resulting system of equations
\[\mathcal R(x,x)/\sigma =
\begin{pmatrix} \frac13(x_1^2 + 4x_1x_2 + x_2^2)\\
\frac23(x_1^2 + x_1x_2 + x_2^2)
\end{pmatrix} =
\begin{pmatrix}
  x_1 \\ x_2
\end{pmatrix}
\]
yields $x_1 = x_2 = \nicefrac12$.
Theorem~\ref{thm:main-result} thus implies that the proportion of balls of each of the two colours converges in probability to $\nicefrac12$, recovering partially the result from~\cite{LMS18} (only partially because the convergence is almost sure in~\cite{LMS18}).
\end{example}

While~\cite[Examples 2-5]{LMS18} also all fulfill Assumptions \textbf{(T)} and \textbf{(B)}, none of them fulfill Assumption \textbf{(E)} and as such we cannot treat them with our result.
This is again not surprising, as the theory of stochastic approximations can be used to
control the entire evolution of a process, while our result only looks at the final phase of the process. We still give an exemplary
treatment of Examples 2 and 3 in the following.

\begin{example} Consider the 2-drawing urn of replacement tensor 
  \[
    R =
    \begin{pmatrix}[cc|cc]
      4 & 0 & 1 & 3\\
      1 & 3 & 1 & 3
    \end{pmatrix}.
  \]
  It satisfies assumptions {\bf (B)} with $\sigma = 4$ and {\bf (T)}.
  However,
  \[\max_{j,j'\in\mathcal S} \sum_{i=1}^d \abs*{R_{i j_1 j_2} - R_{i j'_1 j'_2}} = 6 > 4 = \sigma,\]
  i.e.\ Assumption {\bf (E)} is not satisfied.
Note that the equation $\mathcal R(x,x) = 4 x$ admits two solutions in $\Delta_2$: $(\nicefrac13, \nicefrac23)$ 
and $(1,0)$. 
Lasmar, Mailler and Selmi~\cite{LMS18} are able to prove that, almost surely as $n\uparrow\infty$, $U(n)/n \to x^* = (\nicefrac12,\nicefrac23)$. Our result does not apply to this example.
\end{example}

\begin{example}Another example in Lasmar, Mailler and Selmi~\cite{LMS18} is the 2-drawing urn of replacement tensor
  \[
    R =
    \begin{pmatrix}[cc|cc]
      7 & 1 & 3 & 5\\
      3 & 5 & 1 & 7
    \end{pmatrix}.
  \]
  It satisfies Assumptions {\bf (T)} and {\bf (B)} with $\sigma = 8$.
  However, Assumption \textbf{(E)} is not satisfied since
  \[\max_{j,j'\in\mathcal S} \sum_{i=1}^d \abs*{R_{i j_1 j_2} - R_{i j'_1 j'_2}} = 12 > 8 = \sigma,\]
  Note that $\mathcal R(x,x) = 8x$ admits a unique solution in $\Delta_2$: $(1-\nicefrac1{\sqrt{2}},\nicefrac1{\sqrt{2}})$.
  Lasmar, Mailler and Selmi~\cite{LMS18} are able to prove that, almost surely as $n\uparrow\infty$, $U(n)/n \to x^* = (1-\nicefrac1{\sqrt{2}},\nicefrac1{\sqrt{2}})$. Our result does not apply to this example.
\end{example}

\subsection{Further examples from the literature}

As we have seen, the presence of a unique solution to $\mathcal R(x,x) = \sigma x$ is not sufficient for Assumption {\bf (E)} to hold and thus for our result to apply. 
This also holds when
considering higher-order Markov chains with a simpler dependency structure, 
leading to the analysis of power iteration schemes such as
$x_{t+1} =\mathcal R(x_t, x_t)$. 
This is illustrated by the following 
example.

\begin{example}[\protect{\cite[Ex. 3]{LN14}}]
  The urn given by
  \[R =
    \begin{pmatrix}
      0 & 1 & 0 & 1 \\
      1 & 0 & 1 & 0
    \end{pmatrix}
  \]
  satisfies Assumptions \textbf{(T)} and \textbf{(B)} with $\sigma=1$ and admits a unique solution to $\mathcal R(x,x) = x$ with $x =\frac12(1,1)$.
However, Assumption \textbf{(E)} is not satisfied since
  \[\max_{j,j'\in\mathcal S} \sum_{i=1}^d \abs*{R_{i j_1 j_2} - R_{i j'_1 j'_2}} = 2 > 1 = \sigma.\]
 Li and Ng~\cite{LN14} prove that a simple power iteration scheme already does not converge to this fixed point.
\end{example}

Further, for a stochastic matrix $P$, positivity is sufficient to imply contractivity of the map $x \mapsto Px$ and the existence of a
unique fixed point $x^*$. This is not the case with stochastic tensors, as illustrated by the following example.

\begin{example}
  A 3-draw, 2-colour case (see~\cite[Example 1.7]{CZ13}): 
  \begin{alignat*}{6}
    R_{1111}&{}= 0.872 &\qquad& R_{1112}&{}= 2.416/3 &\qquad& R_{1121}&{}=2.416/3 &\qquad& R_{1122}&{}=0.616/3\\
    R_{1211}&{}= 2.416/3 &\qquad& R_{1212}&{}= 0.616/3 &\qquad& R_{1221}&{}=0.616/3 &\qquad& R_{1222}&{}=0.072\\
    R_{2111}&{}= 0.128 &\qquad& R_{2112}&{}= 0.584/3 &\qquad& R_{2121}&{}=0.584/3 &\qquad& R_{2122}&{}=2.384/3\\
    R_{2211}&{}= 0.584/3 &\qquad& R_{2212}&{}= 2.384/3 &\qquad& R_{2221}&{}=2.384/3 &\qquad& R_{2222}&{}=0.928,
  \end{alignat*}
  This tensor has strictly positive entries and satisfies Assumption \textbf{(B)} with $\sigma=1$, however, $\mathcal R(x,x,x) = x$ is
  solved by both $v_1 = (0.2,0.8)$ and
  $v_2 = (0.6,0.4)$. When we check Assumption \textbf{(E)} by comparing all pairs $(R_{1j_1j_2j_3},
  R_{2j_1j_2j_3})$ for maximal absolute value difference, we find
  \[\max_{j,j'\in\mathcal S} \sum_{i=1}^d \abs*{R_{i j_1 j_2 j_3} - R_{i j'_1 j'_2 j'_3}} = 1.6 > 1.\]
\end{example}


\section{Infinitely-many colour case}\label{sec:infinite}
As mentioned in the introduction, one can easily generalise our model to having infinitely-many colours. In this section, we define this more general model, and state and prove an equivalent of Theorem~\ref{thm:main-result} for this case.

\subsection{Definition of infinitely-many colour, multi-drawing urns}
Let $\mathcal C$ be a measurable space.
Let $(\rho_{x,y})_{x,y\in\mathcal C}$ be a kernel of non-negative measures on $\mathcal C$ such that, for 
all $x,y\in\mathcal C$, $\rho_{x,y}(\mathcal C) = \sigma>0$.

We define a process of random non-negative measures on $\mathcal C$ recursively as follows:
Let $m_0$ be a finite non-negative measure on $\mathcal C$ and, for all $n\geq 0$, given $m_n$, 
we sample $(C_1(n+1), C_2(n+1))$ a random variable of distribution $\hat m_n\otimes \hat m_n$, where $\hat m_n = m_n/m_n(\mathcal C)$, and set
\[m_{n+1} = m_n + \rho_{C_1(n+1), C_2(n+1)}.\]
The process $(m_n)_{n\geq 0}$ is the two-drawing P\'olya urn of initial composition $m_0$ and replacement kernel $(\rho_{x,y})_{x,y\in\mathcal C}$.

\begin{remark}
This model is indeed an extension of the finitely-many colour case: in Section~\ref{sub:MVPP}, we have already showed how to see the finitely-many colour case as a measure-valued process.
\end{remark}

\subsection{Two-dependent Markov chains on the infinite binary tree}
The theory of two-dependent Markov chains introduced in Section~\ref{sec:two-dep-mc} can be extended to Markov chains taking values on an infinite set $\mathcal S$.
The definition is as follows:
\begin{definition}\label{def:MC_inf}
Let $(\pi_i)_{i\geq 1}$ be a sequence of probability distributions on a finite or countable space $\mathcal S$.
Let $(\tau_{x,y})_{x,y \in \mathcal S}$ 
be a kernel of non-negative measures on $\mathcal S$ satisfying $\tau_{x,y}(\mathcal S)=1$ for all $x,y\in\mathcal S$.
We sample $W^{\sss (0)}_i\sim \pi_i$ for all $i\geq 1$, independently of each other.
We then sample the values of $(W_i^{\sss (n)})_{i\geq 1}$ for all $n\geq 1$ recursively as follows: 
for all $n\geq 1$, for all $i\geq 1$, for all $x, y,\in \mathcal S$, given $(W^{\sss (n-1)}_{2i}, W^{\sss (n-1)}_{2i+1}) = (x,y)$, sample $W_i^{\sss (n)}$ a random variable of distribution $\tau_{x,y}$.
We call this process $(W_i^{\sss (n)})_{i\geq 1, n\geq 0}$ the 2-dependent Markov chain of transition tensor $(\tau_{x,y})_{x,y \in \mathcal S}$ and initial distribution $(\pi_i)_{i\geq 1}$.
\end{definition}

Similarly to the finitely-many colour case, 
a 2-dependent Markov chain can be interpreted as a random labelling of the infinite binary tree.
The value $W_i^{\sss (n)}$ can be interpreted as the label of node $(n,i)$: its distribution given the labels of its two parents is $\tau_{(W_{2i}^{\sss (n-1)}, W_{2i+1}^{\sss (n-1)})}$.

In the following, we let $\mathcal M(\mathcal S)$ be the set of all signed measures on $\mathcal S$, and $\mathcal P(\mathcal S)$ be the set of all probability measures on $\mathcal S$.

We define the operator $\mathcal T = \mathcal M(\mathcal S) \times \mathcal M(\mathcal S) \to \mathcal M(\mathcal S)$ as follows: for any two signed measures $\mu$ and $\nu$ on $\mathcal S$
\[\mathcal T(\mu, \nu)
= \int_{x\in\mathcal S}\int_{y\in\mathcal S} 
\tau_{x,y} \mathrm d\mu(x)\mathrm d\nu(y).\]

\begin{lemma}\label{lem:xin_inf}
Let $(\pi_i)_{i\geq 1}$ be a sequence of probability distributions on a finite or countable space $\mathcal S$.
Let $(\tau_{x,y})_{x,y \in \mathcal S}$ 
be a kernel of non-negative measures on $\mathcal S$ satisfying $\tau_{x,y}(\mathcal S)=1$ for all $x,y\in\mathcal S$.
Let $(W_i^{\sss (n)})_{i\geq 1, n\geq 0}$ be the 2-dependent Markov chain of transition tensor $(\tau_{x,y})_{x,y \in \mathcal S}$ and initial distribution $(\pi_i)_{i\geq 1}$. 
For all $n\geq 0, i\geq 1$, we let $\pi_i^{\sss (n)}$ be the distribution of $W_i^{\sss (n)}$.
Then,  for all $i\geq 1$, $\pi_i^{\sss (0)} = \pi_i$ and, for all $n\geq 1$, $i\geq 1$, $w\in\mathcal S$,
\begin{equation}\label{eq:rec_laws_inf}
\pi_i^{\sss (n)} = \mathcal T(\pi_{2i}^{\sss (n-1)}, \pi_{2i+1}^{\sss (n-1)}).
\end{equation}
\end{lemma}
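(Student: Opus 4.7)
The plan is to proceed by induction on $n$, closely mirroring the proof of Lemma~\ref{lem:xin} in the finitely-many colour case. The statement for $n=0$ is immediate from Definition~\ref{def:MC_inf}, since the initial labels are sampled as $W_i^{\sss (0)} \sim \pi_i$ by construction, so $\pi_i^{\sss (0)} = \pi_i$ for every $i \geq 1$.

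For the induction step, the essential ingredient is that, thanks to the tree structure, the two parent labels $W_{2i}^{\sss (n-1)}$ and $W_{2i+1}^{\sss (n-1)}$ are independent. First, I would establish this independence explicitly: by unfolding the recursive definition, each $W_j^{\sss (n-1)}$ depends only on the initial labels $W_\ell^{\sss (0)}$ with $\ell$ ranging over the subtree of the infinite binary tree rooted at $(n-1,j)$, together with the fresh independent sampling randomness used at the interior nodes of that subtree. Since the subtrees rooted at $(n-1,2i)$ and $(n-1,2i+1)$ are disjoint, the two parent labels depend on disjoint families of independent random inputs and are therefore independent. A clean way to present this is as a side induction tracking the $\sigma$-algebras generated by each subtree's randomness.

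Given this independence, for any measurable $A \subseteq \mathcal S$, the plan is to compute $\pi_i^{\sss (n)}(A) = \mathbb P\bigl(W_i^{\sss (n)} \in A\bigr)$ by conditioning on $\bigl(W_{2i}^{\sss (n-1)}, W_{2i+1}^{\sss (n-1)}\bigr)$ and applying the tower rule. The inner conditional probability equals $\tau_{W_{2i}^{\sss (n-1)}, W_{2i+1}^{\sss (n-1)}}(A)$ by Definition~\ref{def:MC_inf}, and taking the outer expectation with respect to the independent pair of parents yields, via Fubini,
\[
\pi_i^{\sss (n)}(A) = \int_{\mathcal S}\int_{\mathcal S} \tau_{x,y}(A)\, \mathrm d\pi_{2i}^{\sss (n-1)}(x)\, \mathrm d\pi_{2i+1}^{\sss (n-1)}(y) = \mathcal T\bigl(\pi_{2i}^{\sss (n-1)}, \pi_{2i+1}^{\sss (n-1)}\bigr)(A),
\]
by the definition of $\mathcal T$. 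Since $A$ is arbitrary, the two measures on $\mathcal S$ coincide, closing the induction.

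The main obstacle I anticipate is purely formal rather than conceptual: in the finitely-many colour case one could work entry by entry, whereas here one has to invoke a measure-theoretic framework (measurability of $(x,y) \mapsto \tau_{x,y}(A)$, as guaranteed by $(\tau_{x,y})_{x,y\in\mathcal S}$ being a kernel, and Fubini for the double integral). Both are standard, so the argument should go through without genuine complication.
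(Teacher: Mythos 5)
Your argument is correct and is exactly the routine verification the paper leaves implicit: the paper states Lemma~\ref{lem:xin_inf} (like its finite-colour counterpart, Lemma~\ref{lem:xin}) without proof, treating it as immediate from the construction. Your two ingredients --- independence of the parent labels via disjointness of the subtrees feeding $(n-1,2i)$ and $(n-1,2i+1)$, followed by the tower rule and the definition of $\mathcal T$ --- are precisely what is needed, and the measure-theoretic points you flag are harmless (indeed $\mathcal S$ is assumed countable in the statement).
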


\begin{definition}\label{def:inv_distr_inf}
We say that $\pi \in\mathcal P(\mathcal S)$ is an invariant distribution of the operator $\mathcal T$ 
if and only if $\pi = \mathcal T(\pi, \pi)$.
\end{definition}

Similarly to the finitely-many colour case, we define the following two maps: 
for all $\mu,\nu\in\mathcal M(\mathcal S)$, we let
\begin{equation}\label{eq:def_Rlr_inf}
\mathcal T^{(\ell)}_\mu (\nu) 
= \mathcal T(\mu, \nu)
\quad\text{ and }\quad 
\mathcal T^{(r)}_\nu (\mu) 
= \mathcal T(\mu, \nu).
\end{equation}
If $\tau_{x,y}(\mathcal S)=1$ for all $x,y\in\mathcal S$, 
then $\mathcal T^{(\ell)}_\mu$ and $\mathcal T^{(r)}_\mu$ are Markovian kernels on $\mathcal S$, 
for all $\mu\in\mathcal P(\mathcal S)$. 
We can define the norm-based ergodicity coefficient (sometimes called Doeblin or Dobrushin coefficient) of a Markovian kernel $\mathcal K$ on
$\mathcal S$ as follows:
\[ \vvvert\mathcal K \vvvert
= \sup_{\nu_1, \nu_2\in \mathcal P(\mathcal S)} \frac{\|\mathcal K(\nu_1) - \mathcal K(\nu_2)\|_{\mathrm{TV}}}{\|\nu_1-\nu_2\|_{\mathrm{TV}}}.\]
We let
  \begin{equation}\label{eq:def_taus_inf}
  \tau_\ell(\mathcal T) \coloneqq{} \sup_{\mu\in\mathcal P(\mathcal S)} \vvvert\mathcal T^{(\ell)}_\mu\vvvert
    \quad\text{ and }\quad 
     \tau_r(\mathcal T) \coloneqq{} \sup_{\mu \in \mathcal P(\mathcal S)} \vvvert\mathcal T^{(r)}_\mu\vvvert.
  \end{equation}

\subsection{Main result for infinitely-many colours}
As in the finitely-many colour case, we assume that the urn process is balanced:
\begin{itemize}
\item[{\bf (B)}] There exists $\sigma \in (0,\infty)$ such that, for all $x,y\in\mathcal C$, 
$\rho_{x,y}(\mathcal C) = \sigma$.
\end{itemize}
Tenability is already ensured by the fact that, for all $x,y\in\mathcal C$, $\rho_{x,y}$ is a non-negative measure.
The ergodicity assumption however, is much harder to state, mostly because the results of~\cite{FT20} are not available in this setting. To
state it, we consider the operator $\mathcal T$ for the 2-dependent Markov chain on $\mathcal S = \mathcal C \times \mathcal C$: For all
$\mu, \nu\in\mathcal M(\mathcal C \times \mathcal C)\times\mathcal M(\mathcal C \times \mathcal C)$,
\begin{equation}\label{eq:def_T_inf}
\mathcal T(\mu, \nu) = \frac1{\sigma^2} \int_{(x_1, x_2)\in\mathcal C\times\mathcal C} \int_{(y_1, y_2)\in\mathcal C\times\mathcal C}
\rho_{x_1, x_2} \otimes \rho_{y_1, y_2} \mathrm d\mu(x_1, x_2)\mathrm d\nu(y_1, y_2).
\end{equation}
\begin{itemize}
\item[{\bf (E)}] $\tau_\ell(\mathcal T) + \tau_r(\mathcal T)<1$.
\end{itemize}
Analogously to the finitely-many colour case, we also define $\mathcal R = \mathcal M(\mathcal C)\times \mathcal M(\mathcal C)\to \mathcal M(\mathcal C)$ as follows: for all $\mu, \nu\in \mathcal M(\mathcal C)$,
\begin{equation}\label{eq:def_R_inf}
\mathcal R(\mu, \nu) = \int_{x\in\mathcal C}\int_{y\in\mathcal C}\rho_{x,y}\mathrm d\nu(x)\mathrm d\mu(y).
\end{equation}

\begin{theorem}\label{thm:convergence_inf}
Let $(m_n)_{n\geq 0}$ be a two-drawing P\'olya process on a measurable space $\mathcal C$, with initial composition $m_0$ and replacement kernel $(\rho_{x,y})_{x,y\in\mathcal C}$. If $(\rho_{x,y})_{x,y\in\mathcal C}$ satisfies Assumptions {\bf (B)} and {\bf (E)}, then there exists a unique $\nu\in\mathcal P(\mathcal C)$ satisfying 
\begin{equation}\label{eq:fix_point_R_inf}
\sigma\nu =\mathcal R(\nu, \nu).
\end{equation}
and, in probability as $n\uparrow\infty$,
$\hat m_n\to \nu$.
\end{theorem}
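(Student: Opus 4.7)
The plan is to follow the architecture of the proof of Theorem~\ref{thm:main-result} and verify that each step extends to the infinite-colour setting. The DAG coupling of Section~\ref{sub:coupling} generalises without change: one constructs the uniform recursive DAG $(G_n)_{n\geq 0}$ together with labels $X(n) = (\tilde C_1(n), \tilde C_2(n))\in\mathcal C\times \mathcal C$, sampling the label of node $n$ from $\rho_{X(M_n)}/\sigma$ (or from $\hat m_0$ if its parent is node $0$); by induction $m_n = m_0 + \sum_{k=1}^n \rho_{X(k)}$ has the law of the Pólya process and $(\tilde C_1(n+1), \tilde C_2(n+1))\sim \hat m_n \otimes \hat m_n$. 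Lemma~\ref{lem:randmeas-conv} is purely measure-theoretic and does not rely on $\mathcal C$ being finite, so it reduces the theorem to proving $X(n)\Rightarrow \nu\otimes\nu$ in distribution. Moreover Lemma~\ref{lem:EF} depends only on the shape of the DAG, so on the high-probability event $\mathcal E_n\cap\mathcal F_n$ the ancestors of $n$ up to distance $\ell(n) = o(\log\log\log n)$ form a complete binary tree whose leaves are grafted at independent uniform nodes of $G_{n_1}$. The independence argument of Lemma~\ref{lem:intralayer-indep} then shows that, on this event, $X(n)$ is distributed as $W_1^{(\ell(n))}$, where $(W_i^{(k)})$ is the $2$-dependent Markov chain on $\mathcal S = \mathcal C\times\mathcal C$ with transition kernel $\tau_{(j_1,j_2),(k_1,k_2)} = (\rho_{j_1,j_2}/\sigma)\otimes (\rho_{k_1,k_2}/\sigma)$, whose associated operator is precisely the $\mathcal T$ of~\eqref{eq:def_T_inf}.

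The main obstacle is to establish the infinite-colour counterpart of Proposition~\ref{lem:tensor-props}, since the results of~\cite{FT20} are only available in finite dimension. I would work directly with $(\mathcal P(\mathcal S), \|\cdot\|_{\mathrm{TV}})$, which is a complete metric space. The key algebraic identity
\[
\mathcal T(\mu_1, \nu_1) - \mathcal T(\mu_2, \nu_2) = \mathcal T^{(\ell)}_{\mu_1}(\nu_1 - \nu_2) + \mathcal T^{(r)}_{\nu_2}(\mu_1 - \mu_2),
\]
together with the observation that $\nu_1 - \nu_2$ and $\mu_1 - \mu_2$ are signed measures of total mass zero (so by the Hahn--Jordan decomposition each is a positive multiple of a difference of two probability measures), reduces the control of each summand to the definitions~\eqref{eq:def_taus_inf} of $\tau_\ell(\mathcal T)$ and $\tau_r(\mathcal T)$. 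This yields the contraction
\[
\|\mathcal T(\mu_1,\nu_1) - \mathcal T(\mu_2,\nu_2)\|_{\mathrm{TV}} \leq q\max\bigl(\|\mu_1-\mu_2\|_{\mathrm{TV}}, \|\nu_1-\nu_2\|_{\mathrm{TV}}\bigr),
\]
with $q = \tau_\ell(\mathcal T) + \tau_r(\mathcal T) < 1$ by {\bf (E)}. Applying Banach's fixed-point theorem to the self-map $\mu\mapsto \mathcal T(\mu,\mu)$ on $\mathcal P(\mathcal S)$ gives existence and uniqueness of an invariant $\pi\in\mathcal P(\mathcal S)$, and a word-for-word transcription of the induction in the proof of Proposition~\ref{lem:tensor-props} shows that $\|\pi_1^{(n)} - \pi\|_{\mathrm{TV}} \leq q^n \sup_{i\geq 1} \|\pi_i^{(0)} - \pi\|_{\mathrm{TV}}$, hence $W_1^{(n)}\Rightarrow \pi$.

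It remains to identify $\pi$ with $\nu\otimes\nu$. The same contraction argument, applied one dimension down to $\mu\mapsto \mathcal R(\mu,\mu)/\sigma$ on $\mathcal P(\mathcal C)$, produces a unique $\nu\in\mathcal P(\mathcal C)$ satisfying $\sigma\nu = \mathcal R(\nu,\nu)$; the required contraction estimate for $\mathcal R/\sigma$ is inherited from the estimate for $\mathcal T$ via the factorisation $\mathcal T(\mu\otimes\mu',\, \mu\otimes\mu') = \mathcal R(\mu,\mu)/\sigma \otimes \mathcal R(\mu',\mu')/\sigma$, which follows immediately from~\eqref{eq:def_T_inf} and~\eqref{eq:def_R_inf}. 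The same factorisation, applied with $\mu = \mu' = \nu$, shows that $\mathcal T(\nu\otimes\nu, \nu\otimes\nu) = \nu\otimes\nu$, so by the uniqueness just established $\pi = \nu\otimes\nu$. Feeding this back through the tower-rule argument of Section~\ref{sub:proof}---which again uses only that $\mathbb P(\mathcal E_n\cap\mathcal F_n)\to 1$ and is independent of $\mathcal C$ being finite---yields $X(n)\Rightarrow \nu\otimes\nu$, and Lemma~\ref{lem:randmeas-conv} concludes that $\hat m_n \to \nu$ in probability.
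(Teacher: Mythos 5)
Your overall architecture coincides with the paper's: the DAG coupling, the reduction via Lemma~\ref{lem:randmeas-conv}, the shape Lemma~\ref{lem:EF}, the local identification of $X(n)$ with a $2$-dependent Markov chain on $\mathcal C^2$, and the total-variation contraction argument for $\mathcal T$ (Proposition~\ref{lem:tensor-props_inf}, including the geometric induction down the tree) are exactly the steps the paper takes, and your treatment of those steps is correct.

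The one place you diverge is the final step (existence and uniqueness of $\nu$ and the identification $\pi=\nu\otimes\nu$), and there your argument has a genuine gap. First, the factorisation you invoke is misstated: by~\eqref{eq:def_T_inf} and~\eqref{eq:def_R_inf}, the first tensor factor of $\mathcal T(\mu\otimes\mu',\,\cdot\,)$ is $\frac1\sigma\int\int\rho_{x_1,x_2}\,\mathrm d\mu(x_1)\,\mathrm d\mu'(x_2)$, which depends on \emph{both} $\mu$ and $\mu'$; the correct identity is $\mathcal T(\mu\otimes\mu',\nu\otimes\nu')=\frac1\sigma\mathcal R(\mu',\mu)\otimes\frac1\sigma\mathcal R(\nu',\nu)$, and your version only holds when $\mu=\mu'$. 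More importantly, the claim that a contraction for $\mu\mapsto\mathcal R(\mu,\mu)/\sigma$ on $\mathcal P(\mathcal C)$ is ``inherited'' from the contraction of $\mathcal T$ does not follow as written: restricting the $\mathcal T$-estimate to product measures only gives
\[
\norm{\mathcal R(\mu,\mu)/\sigma-\mathcal R(\nu,\nu)/\sigma}{\mathrm{TV}}
\leq \norm{\mathcal T(\mu\otimes\mu,\mu\otimes\mu)-\mathcal T(\nu\otimes\nu,\nu\otimes\nu)}{\mathrm{TV}}
\leq q\,\norm{\mu\otimes\mu-\nu\otimes\nu}{\mathrm{TV}}
\leq 2q\,\norm{\mu-\nu}{\mathrm{TV}},
\]
and $2q$ need not be below $1$. (This can be repaired: $\mathcal T(\mu,\nu)$ factorises as $F(\mu)\otimes F(\nu)$ with $F$ the one-step marginal operator, Assumption {\bf (E)} amounts to $2\,\vvvert F\vvvert<1$, and a left/right decomposition of $\mathcal R$ then gives Lipschitz constant $2\vvvert F\vvvert<1$ --- but none of this is in your write-up.) The paper sidesteps the issue entirely: it computes $\mathcal T(\pi,\pi)$ on boxes $A_1\times A_2$ (equation~\eqref{eq:calc_boxes}) to show that the unique fixed point $\pi$ is automatically the product measure $\nu\otimes\nu$ with $\nu=\frac1\sigma\int\rho_{\alpha_1,\alpha_2}\,\mathrm d\pi(\alpha_1,\alpha_2)$, which yields \emph{existence} of a solution of $\sigma\nu=\mathcal R(\nu,\nu)$ for free, and it derives \emph{uniqueness} of $\nu$ from uniqueness of $\pi$ (any other solution $\nu'$ would make $\nu'\otimes\nu'$ a second fixed point of $\mathcal T$). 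You should either adopt that route or supply the missing marginal-operator argument.
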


\begin{remark}
Because of Assumption {\bf (E)} being hard to check in practice, we are not able to provide any (non-trivial) example of infinitely-many colour multi-drawing P\'olya urn to which Theorem~\ref{thm:convergence_inf}.
We leave it for further work to prove an equivalent of Fasino and Tudisco~\cite{FT20}'s criterion for $2$-dependent Markov chains on infinite state spaces.
\end{remark}

\subsection{Proof of Theorem~\ref{thm:convergence_inf}}

\begin{proposition}\label{lem:tensor-props_inf}
  Let $\mathcal T$ be a 2-dependent Markov operator on $\mathcal S$ satisfying condition \textbf{(E)}. Then there exists a unique $\pi \in
  \mathcal P (S)$ such that $\mathcal T(\pi, \pi) = \pi$. 
Furthermore, 
any sequence $(\pi_i^{\sss (n)})_{n\geq 0, i\geq 1}$ of measures in $\mathcal P(\mathcal S)$ that satisfies~\eqref{eq:rec_laws_inf} is such that, for all $n\geq 0$,
\[\norm{\pi_1^{\sss (n)}-\pi}{1} \leq q^n \max_{i\geq 1} \norm{\pi_i^{\sss (0)} - \pi}{1},\]
for $q = \tau_\ell(\mathcal T) + \tau_r(\mathcal T)<1$.
Therefore, $\lim_{n \to \infty} \pi_1^{\sss (n)} = \pi$.
\end{proposition}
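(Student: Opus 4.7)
The plan is to adapt the proof of Proposition~\ref{lem:tensor-props} line by line, with two substantive changes. First, the $L^1$-norm on $\mathbb{R}^{\mathcal S}$ is replaced everywhere by the total variation norm $\|\cdot\|_{\mathrm{TV}}$ on $\mathcal M(\mathcal S)$. Second, since \cite[Thms~6.1, 6.3]{FT20} are stated only for finite state spaces, existence and uniqueness of the fixed point will be derived directly from a Banach fixed-point argument rather than imported from the literature.

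The core step is the contraction estimate: for all $\mu_1,\mu_2,\nu_1,\nu_2\in\mathcal P(\mathcal S)$,
\[\|\mathcal T(\mu_1,\nu_1)-\mathcal T(\mu_2,\nu_2)\|_{\mathrm{TV}} \leq \tau_\ell(\mathcal T)\|\nu_1-\nu_2\|_{\mathrm{TV}} + \tau_r(\mathcal T)\|\mu_1-\mu_2\|_{\mathrm{TV}}.\]
Mimicking the finite case, I would decompose
\[\mathcal T(\mu_1,\nu_1)-\mathcal T(\mu_2,\nu_2) = \bigl(\mathcal T^{(\ell)}_{\mu_1}(\nu_1)-\mathcal T^{(\ell)}_{\mu_1}(\nu_2)\bigr) + \bigl(\mathcal T^{(r)}_{\nu_2}(\mu_1)-\mathcal T^{(r)}_{\nu_2}(\mu_2)\bigr),\]
using the bilinearity of $\mathcal T$, and bound each bracket from the definition~\eqref{eq:def_taus_inf} of the ergodicity coefficients $\tau_\ell(\mathcal T)$ and $\tau_r(\mathcal T)$.

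Once this inequality is established, the rest is routine. With $q = \tau_\ell(\mathcal T) + \tau_r(\mathcal T) < 1$ guaranteed by Assumption~\textbf{(E)}, the map $\Phi:\mathcal P(\mathcal S)\to\mathcal P(\mathcal S)$ defined by $\Phi(\mu) = \mathcal T(\mu,\mu)$ is a $q$-contraction in TV, and $(\mathcal P(\mathcal S),\|\cdot\|_{\mathrm{TV}})$ is a complete metric space (as a closed subset of the Banach space of finite signed measures on $\mathcal S$), so Banach's fixed-point theorem yields a unique $\pi\in\mathcal P(\mathcal S)$ with $\mathcal T(\pi,\pi)=\pi$. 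Specialising the contraction estimate to $\mu_2=\nu_2=\pi$ gives $\|\mathcal T(\mu,\nu)-\pi\|_{\mathrm{TV}} \leq q\max(\|\mu-\pi\|_{\mathrm{TV}},\|\nu-\pi\|_{\mathrm{TV}})$, and an induction on $n$ along the infinite binary tree, identical to the one carried out in the proof of Proposition~\ref{lem:tensor-props}, produces the stated geometric bound; convergence then follows since $q<1$.

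The main obstacle is the verification of the contraction estimate itself. The finite-state argument implicitly uses bilinearity to rewrite each bracket as $\mathcal T^{(\ell)}_{\mu_1}(\nu_1-\nu_2)$, which in the infinite setting requires either extending $\mathcal T^{(\ell)}_{\mu_1}$ linearly to $\mathcal M(\mathcal S)$ and checking that its operator norm on zero-mass signed measures coincides with $\vvvert\mathcal T^{(\ell)}_{\mu_1}\vvvert$, or bounding $\|\mathcal T^{(\ell)}_{\mu_1}(\nu_1)-\mathcal T^{(\ell)}_{\mu_1}(\nu_2)\|_{\mathrm{TV}}$ directly from the definition of $\vvvert\,\cdot\,\vvvert$ as a supremum over pairs of probability measures. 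The latter route is cleaner and is what I would use. Beyond this bookkeeping, no other step introduces any genuine infinite-dimensional difficulty.
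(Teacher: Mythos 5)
Your proposal is correct and follows essentially the same route as the paper: the paper likewise replaces the appeal to \cite{FT20} by a direct contraction/Banach fixed-point argument for $\tilde{\mathcal T}(\mu)=\mathcal T(\mu,\mu)$ in total variation, then establishes the two-argument estimate $\norm{\mathcal T(\mu_1,\mu_2)-\pi}{\mathrm{TV}}\leq q\max(\norm{\mu_1-\pi}{\mathrm{TV}},\norm{\mu_2-\pi}{\mathrm{TV}})$ via the same bilinear decomposition and concludes by the same induction along the tree. The ``bookkeeping'' point you flag is handled in the paper exactly as in your preferred route, by writing $\mathcal T^{(\ell)}_{\mu}(\nu_1-\nu_2)=\mathcal T^{(\ell)}_{\mu}(\nu_1)-\mathcal T^{(\ell)}_{\mu}(\nu_2)$ for probability measures $\nu_1,\nu_2$ and invoking the definition of $\vvvert\cdot\vvvert$ as a supremum over such pairs.
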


Note that, by Lemma~\ref{lem:xin_inf}, $\lim_{n \to \infty} \pi_1^{\sss (n)} = \pi$ implies that $W_1^{\sss (n)} \Rightarrow \pi$ in
distribution as $n\uparrow\infty$, where $(W_i^{\sss (n)})_{n\geq 0, i\geq 1}$ is the 2-dependent Markov chain with operator $\mathcal T$.

\begin{proof}
  Let $\tilde{\mathcal T}(\pi) = \mathcal T(\pi, \pi)$. Then $\tilde{\mathcal T}$ is a map from $\mathcal P(\mathcal S)$ to $\mathcal P(\mathcal S)$. 
By linearity of the integral, for all $\mu, \nu\in\mathcal M(\mathcal S)$,
  \begin{align*}
    \tilde{\mathcal T}(\mu) - \tilde{\mathcal T}(\nu) ={}& \mathcal T(\mu,\mu) - \mathcal T(\nu,\nu) \\
    ={}& \mathcal T(\mu,\mu) - \mathcal T(\mu,\nu) + \mathcal T(\mu,\nu) - \mathcal T(\nu,\nu) \\
    ={}& \mathcal T(\mu,\mu-\nu) + \mathcal T(\mu-\nu,\nu) = \mathcal T^{(\ell)}_\mu(\mu-\nu) + \mathcal T^{(r)}_{\nu}(\mu-\nu) \\
    ={}& {\mathcal T^{(\ell)}_\mu(\mu)-\mathcal T^{(\ell)}_\mu(\nu) + \mathcal T^{(r)}_{\nu}(\mu) - \mathcal T^{(r)}_\nu(\nu).}
  \end{align*}
    (See~\eqref{eq:def_Rlr_inf} for the definition of the operators $T^{(\ell)}_\mu$ and $T^{(r)}_{\nu}$.)
  Therefore,
  \begin{align*}
    \norm{\tilde{\mathcal T}(\mu) - \tilde{\mathcal T}(\nu)}{\mathrm{TV}} 
    ={} & {\norm{\mathcal T^{(\ell)}_\mu(\mu)-\mathcal T^{(\ell)}_\mu(\nu) + \mathcal T^{(r)}_{\nu}(\mu) - \mathcal T^{(r)}_\nu(\nu)}{\mathrm{TV}}} \\
    \leq{}& \norm{\mathcal T^{(\ell)}_\mu(\mu)-\mathcal T^{(\ell)}_\mu(\nu) }{\mathrm{TV}} + \norm{\mathcal T^{(r)}_{\nu}(\mu) - \mathcal T^{(r)}_\nu(\nu)}{\mathrm{TV}} \\
    \leq{}& \vvvert\mathcal T^{(\ell)}_\mu\vvvert\norm{\mu-\nu}{\mathrm{TV}} + \vvvert\mathcal T^{(r)}_\nu\vvvert\norm{\mu-\nu}{\mathrm{TV}}\\
    \leq{}& \tau_\ell(\mathcal T)\norm{\mu-\nu}{\mathrm{TV}} + \tau_r(\mathcal T)\norm{\mu-\nu}{\mathrm{TV}} \\
    ={}& q\norm{\mu-\nu}{\mathrm{TV}}.
  \end{align*}
  Therefore, $\tilde{\mathcal T}$ is a contraction on $\mathcal M(S)$ and has a unique fixed point $\pi\in\mathcal P(\mathcal S)$.
  In other words, there exists a unique probability measure $\pi$ such that $\mathcal
  T(\pi,\pi) = \pi$.

 Similarly, for all $\mu_1, \mu_2, \nu_1, \nu_2\in\mathcal M(\mathcal S)$,  
 \begin{align*}
    \mathcal T(\mu_1,\mu_2) - \mathcal T(\nu_1,\nu_2)
    ={}& \mathcal T(\mu_1,\mu_2) - \mathcal T(\mu_1,\nu_2) + \mathcal T(\mu_1,\nu_2) - \mathcal T(\nu_1,\nu_2)
    ={} \mathcal T(\mu_1,\mu_2-\nu_2) + \mathcal T(\mu_1-\nu_1,\nu_2) \\
    ={}& \mathcal T^{(\ell)}_{\mu_1}(\mu_2-\nu_2) + \mathcal T^{(r)}_{\nu_2}(\mu_1-\nu_1).
  \end{align*}
  Applying this to $\nu_1 = \nu_2 = \pi$ (the unique probability measure satisfying $\pi = \mathcal T(\pi, \pi)$),
  we get that, for all $\mu_1, \mu_2 \in\mathcal M(\mathcal S)$,
  \begin{align}
    \norm{\mathcal T(\mu_1,\mu_2) - \pi}{\mathrm{TV}} 
    ={}& \norm{\mathcal T(\mu_1,\mu_2) - \mathcal T(\pi,\pi)}{\mathrm{TV}} 
    ={}\norm{\mathcal T^{(\ell)}_{\mu_1}(\mu_2-\pi) + \mathcal T^{(r)}_{\pi}(\mu_1-\pi)}{\mathrm{TV}} \notag\\
    \leq{}& \vvvert\mathcal T^{(\ell)}_{\mu_1}\vvvert\norm{\mu_2-\pi}{\mathrm{TV}} + \vvvert\mathcal T^{(r)}_\pi\vvvert\norm{\mu_1-\pi}{\mathrm{TV}}
    \leq{} \tau_\ell(\mathcal T)\norm{\mu_2-\pi}{\mathrm{TV}} + \tau_r(\mathcal T)\norm{\mu_1-\pi}{\mathrm{TV}} \notag\\
    \leq{}& (\tau_\ell(\mathcal T) + \tau_r(\mathcal T))\max(\norm{\mu_2-\pi}{1}, \norm{\mu_1-\pi}{\mathrm{TV}}) \notag\\
    ={}& q\max(\norm{y-\pi}{\mathrm{TV}}, \norm{x-\pi}{\mathrm{TV}}),\label{eq:contraction_inf}
    \end{align}
  where we have used $q = \tau_\ell(\mathcal T) + \tau_r(\mathcal T)$.
  We now prove inductively on $n\geq 0$, that for all $n\geq 0$, for all $i\geq 1$,
  \begin{equation}\label{eq:induction_hyp_inf}
  \norm{\pi_i^{\sss (n)}-\pi}{\mathrm{TV}} \leq q^n \max_{j\geq 1} \norm{\pi_j^{\sss (0)} - \pi}{\mathrm{TV}}.
\end{equation}
  The base case holds straightforwardly; 
  for the induction, note that, for all $n\geq 1$, for all $i\geq 1$,
  because $\pi = \mathcal T(\pi, \pi)$ and~\eqref{eq:rec_laws_inf}, we have
  \begin{align*}
    \norm{\pi_i^{\sss (n)}-\pi}{\mathrm{TV}} 
    ={}& \norm{\mathcal T\big(\pi_{2i}^{\sss (n-1)},\pi_{2i+1}^{\sss (n-1)}\big)-\mathcal T(\pi,\pi)}{\mathrm{TV}} \\
    \leq{}& q \max \big( \norm{\pi_{2i}^{\sss (n-1)}-\pi}{\mathrm{TV}}, \norm{\pi_{2i+1}^{\sss (n-1)}-\pi}{\mathrm{TV}} \big),
    \end{align*}
    by~\eqref{eq:contraction_inf}.
    Now, by the induction hypothesis, we get
    \begin{align*}
    \norm{\pi_i^{\sss (n)}-\pi}{\mathrm{TV}}
    \leq{}& q \max \big( q^{n-1} \max_{j\geq 1} \norm{\pi_j^{\sss (0)}-\pi}{\mathrm{TV}}, q^{n-1}\max_{j\geq 1}\norm{\pi_j^{\sss (0)}-\pi}{\mathrm{TV}} \big) \\
    \leq{}& q^n \max_{j\geq 1}\norm{\pi_j^{\sss (0)}-\pi}{\mathrm{TV}},
  \end{align*}
  which concludes the proof of~\eqref{eq:induction_hyp_inf}. The result follows by taking $i=1$ in~\eqref{eq:induction_hyp_inf}.
\end{proof}

Note that Lemma~\ref{lem:intralayer-indep} holds in the infinitely-many colour case because the proof was independent of the set of colours.

\begin{lemma}\label{lem:link_MC_inf}
Fix $n\geq 0$; we work conditionally on $G_{n_1(n)}$ and $(X(i))_{1\leq i\leq n_1(n)}$, and on the event $\mathcal E_n\cap\mathcal F_n$.
We order the nodes in $L(\ell(n)-1)$ from left to right so that $H'_n$ is a plane binary tree.
For all $1\leq i\leq 2^{\ell(n)-1}$, we let $\pi_i^{\sss (0)}$ be the distribution of the label of the $i$-th node in level $L(\ell(n)-1)$ (from left to right); if $i>2^{\ell(n)-1}$, we let $\pi_i^{\sss (0)}$ be a fixed, and arbitrary distribution on $\mathcal C^2$.

Then, in distribution, $X(n) = W_1^{\sss (n)}$, where $(W_i^{\sss (n)})_{n\geq 0, i\geq 1}$ is the 2-dependent Markov chain on $\mathcal
C^2$ with operator $\mathcal T$ given by~\eqref{eq:def_T_inf}.
\end{lemma}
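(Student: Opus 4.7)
The plan is to mimic the proof of Lemma~\ref{lem:link_MC} line by line, checking that every step extends to an arbitrary measurable colour space $\mathcal C$. On the event $\mathcal E_n \cap \mathcal F_n$, the subgraph $H'_n$ is by definition a complete plane binary tree whose root is node $n$ and whose leaves form the ordered level $L(\ell(n)-1)$. The label $X(n)$ is produced by the $\mathcal R$-labelled DAG recursion from the leaf labels upwards, so it suffices to check (a) that, once ordered from left to right, the leaf labels are mutually independent with the prescribed distributions $\pi_1^{\sss (0)}, \ldots, \pi_{2^{\ell(n)-1}}^{\sss (0)}$, and (b) that the transition rule propagating labels from a pair of parents to their child coincides with the Markov kernel on $\mathcal C^2$ associated with the operator $\mathcal T$ of~\eqref{eq:def_T_inf}.

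For (a), I would invoke Lemma~\ref{lem:intralayer-indep}. Its proof uses only the disjointness (on $\mathcal E_n \cap \mathcal F_n$) of the ancestor subtrees $H_u, H_v$ for distinct $u, v \in L(\ell(n)-1)$, together with the fact that, given the labels of its parents, the label of any node is sampled from a fixed kernel independently of everything else. Both ingredients remain valid when $\mathcal C$ is a measurable space equipped with a measurable replacement kernel $(\rho_{x,y})_{x,y\in\mathcal C}$. The arbitrary choice of $\pi_i^{\sss (0)}$ for $i > 2^{\ell(n)-1}$ is immaterial, since these positions lie outside the subtree that determines the root label of the Markov chain we construct.

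For (b), consider an internal node $v$ of $H'_n$ with parents $v', v''$, and condition on $X(v') = (x_1, x_2)$ and $X(v'') = (y_1, y_2)$ in $\mathcal C^2$. By the $\mathcal R$-labelled DAG construction of Section~\ref{sub:coupling}, applied here with the kernel $(\rho_{x,y})_{x,y\in\mathcal C}$, one has $X(v) = (\tilde C_1, \tilde C_2)$ with $\tilde C_1 \sim \rho_{x_1,x_2}/\sigma$ and $\tilde C_2 \sim \rho_{y_1,y_2}/\sigma$ independently; Assumption~\textbf{(B)} guarantees that each of these is a probability measure on $\mathcal C$. Thus the conditional law of $X(v)$ is the product $(\rho_{x_1,x_2}/\sigma) \otimes (\rho_{y_1,y_2}/\sigma)$, which, by~\eqref{eq:def_T_inf}, is exactly $\mathcal T(\delta_{(x_1,x_2)}, \delta_{(y_1,y_2)})$. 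The integral structure of $\mathcal T$ then identifies the transition kernel of the recursion on $H'_n$ with the one appearing in Definition~\ref{def:MC_inf}.

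Putting (a) and (b) together, the random labelling of $H'_n$ from leaves to root has exactly the same joint distribution as the corresponding generations of the 2-dependent Markov chain on $\mathcal C^2$ with operator $\mathcal T$ and initial distributions $(\pi_i^{\sss (0)})_{i\geq 1}$; in particular, $X(n)$ and $W_1^{\sss (n)}$ share the same law. There is no genuine obstacle here: the only technical point that needs attention is the measurability of the couplings and conditional distributions involved, which is built into the standing assumption that $(\rho_{x,y})_{x,y\in\mathcal C}$ is a measurable kernel, so the finite-colour proof goes through essentially unchanged.
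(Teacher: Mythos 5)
Your proposal is correct and follows the same route as the paper: the paper's proof simply invokes Lemma~\ref{lem:intralayer-indep} (whose argument is colour-set independent, giving leaf independence) together with the definition of the $\mathcal R$-labelled DAG to identify the transition kernel with $\mathcal T$ from~\eqref{eq:def_T_inf}. You merely spell out these two ingredients in more detail than the paper does.
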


\begin{proof}
This is straightforward from Lemma~\ref{lem:intralayer-indep} (which gives independence of the labels of the leaves - needed in the
definition of a 2-dependent Markov chain), and from the definition of the $\mathcal R$-labelled DAG, which gives the Markov operator
$\mathcal T$.
\end{proof}

Proposition~\ref{lem:tensor-props_inf} states that under Assumptions \textbf{(B)} and \textbf{(E)}, there exists a unique $\pi\in \mathcal
P(\mathcal C)$ such that $\mathcal T(\pi, \pi) = \pi$, and $W_1^{\sss (n)}$ converges in distribution to $\pi$.
We thus get, by Lemma~\ref{lem:link_MC_inf}, that $X(n)\Rightarrow \pi$ in distribution as $n\uparrow\infty$, conditionally on $G_{n_1(n)}$
and $(X(i))_{1\leq i\leq n_1(n)}$, and on the event $\mathcal E_n\cap\mathcal F_n$.

This means that, for all measurable sets $A$ of $\mathcal C \times \mathcal C$,
\[\frac1{\mathbb P(\mathcal E_n\cap \mathcal F_n)}\cdot\mathbb P\big(\{X(n) \in A\}\cap \mathcal E_n\cap \mathcal F_n \mid G_{n_1(n)}, (X(i))_{1\leq i\leq n_1(n)}\big) \to \pi(A),\]
as $n\uparrow\infty$.

As in~\cref{sub:proof}, we get
\[\mathbb P\big(X(n) \in A \big) \to \pi(A).\]

Hence, to prove Theorem~\ref{thm:convergence_inf}, it only remains to prove that there is a unique $\nu\in \mathcal P(\mathcal C)$ such that $\sigma\nu = \mathcal R(\nu, \nu)$, and that $\pi= \nu\otimes \nu$.

By definition of $\mathcal T$ (see~\eqref{eq:def_T_inf}) and by Lemma~\ref{lem:xin_inf}, for any $A_1, A_2$ two measurable subsets of $\mathcal C$, for any $n\geq 1$,
\begin{equation}\label{eq:calc_boxes}
\mathcal T(\pi, \pi)(A_1\times A_2)
= \frac1{\sigma^2}\int_{(\alpha_1, \alpha_2)\in\mathcal C^2} \rho_{\alpha_1,\alpha_2}(A_1)\mathrm d\pi(\alpha_1,\alpha_2)
\int_{(\beta_1, \beta_2)\in\mathcal C^2} \rho_{\beta_1, \beta_2}(A_2)\mathrm d\pi(\beta_1, \beta_2)
= \nu(A_1)\nu(A_2),
\end{equation}
where we have set $\nu(A) =  \frac1{\sigma}\int_{\alpha_1, \alpha_2} \rho_{\alpha_1,\alpha_2}(A)\mathrm d\pi(\alpha_1,\alpha_2)$, for all $A\in\mathcal C$; in other words,
\begin{equation}\label{eq:def_nu_inf}
\nu =  \frac1{\sigma}\int_{\alpha_1, \alpha_2} \rho_{\alpha_1,\alpha_2}\mathrm d\pi(\alpha_1,\alpha_2)
\end{equation} 
Because the product topology on $\mathcal C\times\mathcal C$ is generated by ``boxes'' (subsets of the form $A_1\times A_2$ as above), and because the set of all boxes is stable by intersection (and thus a $\pi$-system), 
this implies that $\mathcal T(\pi, \pi)(A) = (\nu\otimes \nu)(A)$ for any measurable subset of $\mathcal C\times\mathcal C$. We thus get that $\pi =\mathcal T(\pi, \pi)= \nu\otimes\nu$.

Note that, by definition (see~\eqref{eq:def_nu_inf} for the definition of $\nu$, and~\eqref{eq:def_R_inf} for the definition of $\mathcal R$), we have $\sigma \nu = \mathcal R(\nu, \nu)$. 
It only remains to show that this solution is unique. 
Assume that there exists $\nu'\in\mathcal P(\mathcal C)$ such that $\sigma \nu' = \mathcal R(\nu', \nu')$.
Then the same calculation as in~\eqref{eq:calc_boxes} shows that $\mathcal T(\nu'\otimes \nu', \nu'\otimes\nu') = \nu'\otimes \nu'$. Because $\pi = \nu\otimes\nu$ is the unique solution of $\mathcal T(\pi, \pi) =\pi$ in $\mathcal P(\mathcal C\times \mathcal C)$, we get that $\pi = \nu'\otimes\nu'$ and thus that $\nu' = \nu$, proving uniqueness of the solution of~\eqref{eq:fix_point_R_inf}.

\bibliographystyle{alpha}
\bibliography{references}

\begin{thebibliography}{SKBM25}

\bibitem[AK68]{AK68}
Krishna~B Athreya and Samuel Karlin.
\newblock Embedding of urn schemes into continuous time {M}arkov branching
  processes and related limit theorems.
\newblock {\em The Annals of Mathematical Statistics}, 39(6):1801--1817, 1968.

\bibitem[CK13]{CK}
May-Ru Chen and Markus Kuba.
\newblock On generalized {P}{\'o}lya urn models.
\newblock {\em Journal of Applied Probability}, 50(4):1169--1186, 2013.

\bibitem[CNF08]{CNF08}
Wai-Ki Ching, Michael~K Ng, and Eric~S Fung.
\newblock Higher-order multivariate {M}arkov chains and their applications.
\newblock {\em Linear Algebra and its Applications}, 428(2-3):492--507, 2008.

\bibitem[{\'C}P07]{Cir07}
Ljubomir~B {\'C}iri{\'c} and Slavi{\v{s}}a~B Pre{\v{s}}i{\'c}.
\newblock On pre{\v{s}}ic type generalization of the {B}anach contraction
  mapping principle.
\newblock {\em Acta Math. Univ. Comenianae}, 76(2):143--147, 2007.

\bibitem[CQZ13]{CQZ13}
Kungching Chang, Liqun Qi, and Tan Zhang.
\newblock A survey on the spectral theory of nonnegative tensors.
\newblock {\em Numerical Linear Algebra with Applications}, 20(6):891--912,
  2013.

\bibitem[CW05]{CW}
May-Ru Chen and Ching-Zong Wei.
\newblock A new urn model.
\newblock {\em Journal of Applied Probability}, 42(4):964--976, 2005.

\bibitem[CZ13]{CZ13}
Kung-Ching Chang and Tan Zhang.
\newblock On the uniqueness and non-uniqueness of the positive {Z}-eigenvector
  for transition probability tensors.
\newblock {\em Journal of Mathematical Analysis and Applications},
  408(2):525--540, 2013.

\bibitem[EP23]{EP23}
Florian Eggenberger and George P{\'o}lya.
\newblock {\"U}ber die {S}tatistik verketteter {V}org{\"a}nge.
\newblock {\em ZAMM-Journal of Applied Mathematics and Mechanics/Zeitschrift
  f{\"u}r Angewandte Mathematik und Mechanik}, 3(4):279--289, 1923.

\bibitem[Fre77]{Freedman77}
David Freedman.
\newblock A remark on the difference between sampling with and without
  replacement.
\newblock {\em Journal of the American Statistical Association},
  72(359):681--681, 1977.

\bibitem[FT20]{FT20}
Dario Fasino and Francesco Tudisco.
\newblock Ergodicity coefficients for higher-order stochastic processes.
\newblock {\em SIAM Journal on Mathematics of Data Science}, 2(3):740--769,
  2020.

\bibitem[GT19]{GT19}
Antoine Gautier and Francesco Tudisco.
\newblock The contractivity of cone-preserving multilinear mappings.
\newblock {\em Nonlinearity}, 32(12):4713, 2019.

\bibitem[GTH19a]{GTH19}
Antoine Gautier, Francesco Tudisco, and Matthias Hein.
\newblock The {P}erron--{F}robenius theorem for multihomogeneous mappings.
\newblock {\em SIAM Journal on Matrix Analysis and Applications},
  40(3):1179--1205, 2019.

\bibitem[GTH19b]{GTH19.2}
Antoine Gautier, Francesco Tudisco, and Matthias Hein.
\newblock A unifying {P}erron--{F}robenius theorem for nonnegative tensors via
  multihomogeneous maps.
\newblock {\em SIAM Journal on Matrix Analysis and Applications},
  40(3):1206--1231, 2019.

\bibitem[GTH23]{GTH23}
Antoine Gautier, Francesco Tudisco, and Matthias Hein.
\newblock Nonlinear {P}erron--{F}robenius theorems for nonnegative tensors.
\newblock {\em SIAM Review}, 65(2):495--536, 2023.

\bibitem[Jan04]{Jans04}
Svante Janson.
\newblock Functional limit theorems for multitype branching processes and
  generalized pólya urns.
\newblock {\em Stochastic Processes and their Applications}, 110(2):177--245,
  2004.

\bibitem[Jan24]{Jans14}
Svante Janson.
\newblock The number of descendants in a random directed acyclic graph.
\newblock {\em Random Structures \& Algorithms}, 64(3):768--803, 2024.

\bibitem[KM17]{KM}
Markus Kuba and Hosam Mahmoud.
\newblock Two-color balanced affine urn models with multiple drawings.
\newblock {\em Advances in Applied Mathematics}, 90:1--26, 2017.

\bibitem[KMP13]{KMP}
Markus Kuba, Hosam Mahmoud, and Alois Panholzer.
\newblock Analysis of a generalized {F}riedman’s urn with multiple drawings.
\newblock {\em Discrete Applied Mathematics}, 161(18):2968--2984, 2013.

\bibitem[KS17a]{KS}
Markus Kuba and Henning Sulzbach.
\newblock On martingale tail sums in affine two-color urn models with multiple
  drawings.
\newblock {\em Journal of Applied probability}, 54(1):96--117, 2017.

\bibitem[KS17b]{KS17}
Markus Kuba and Henning Sulzbach.
\newblock On martingale tail sums in affine two-color urn models with multiple
  drawings.
\newblock {\em Journal of Applied probability}, 54(1):96--117, 2017.

\bibitem[Lim05]{Lim05}
Lek-Heng Lim.
\newblock Singular values and eigenvalues of tensors: a variational approach.
\newblock In {\em 1st IEEE International Workshop on Computational Advances in
  Multi-Sensor Adaptive Processing}, pages 129--132. IEEE, 2005.

\bibitem[LMS18]{LMS18}
Nabil Lasmar, C{\'e}cile Mailler, and Olfa Selmi.
\newblock Multiple drawing multi-colour urns by stochastic approximation.
\newblock {\em Journal of Applied Probability}, 55(1):254--281, 2018.

\bibitem[LN14]{LN14}
Wen Li and Michael~K Ng.
\newblock On the limiting probability distribution of a transition probability
  tensor.
\newblock {\em Linear and Multilinear Algebra}, 62(3):362--385, 2014.

\bibitem[Mar06]{Markov}
Andrey~A Markov.
\newblock Extension of the law of large numbers to quantities depending on each
  other (1906). {R}eprint.
\newblock {\em Journal {\'E}lectronique d'Histoire des Probabilit{\'e}s et de
  la Statistique}, 2(1b):Article 10, 2006.

\bibitem[MM17]{MM17}
C{\'e}cile Mailler and Jean-Fran{\c{c}}ois Marckert.
\newblock {Measure-valued Pólya urn processes}.
\newblock {\em Electronic Journal of Probability}, 22:1--33, 2017.

\bibitem[Qi05]{Qi05}
Liqun Qi.
\newblock Eigenvalues of a real supersymmetric tensor.
\newblock {\em Journal of Symbolic Computation}, 40(6):1302--1324, 2005.

\bibitem[RNO19]{ALS}
Aguech Rafik, Lasmar Nabil, and Selmi Olfa.
\newblock A generalized urn with multiple drawing and random addition.
\newblock {\em Annals of the Institute of Statistical Mathematics},
  71(2):389--408, 2019.

\bibitem[Sen06]{Sen06}
Eugene Seneta.
\newblock {\em Non-negative matrices and Markov chains}.
\newblock Springer Science \& Business Media, 2006.

\bibitem[SKBM25]{SKBM25}
Joshua Sparks, Markus Kuba, Srinivasan Balaji, and Hosam Mahmoud.
\newblock On affine multicolor urns grown under multiple drawing.
\newblock {\em Journal of Applied Probability}, 62(2):808--831, 2025.

\bibitem[SPY16]{SPY16}
Yongfu Su, Adrian Petru{\c{s}}el, and Jen-Chih Yao.
\newblock Multivariate fixed point theorems for contractions and nonexpansive
  mappings with applications.
\newblock {\em Fixed Point Theory and Applications}, 2016:1--19, 2016.

\bibitem[Sta78]{Stam78}
Adriaan~Johannes Stam.
\newblock Distance between sampling with and without replacement.
\newblock {\em Statistica Neerlandica}, 32(2):81--91, 1978.

\bibitem[WL23]{WL23}
Xiaoxiao Wang and Yaotang Li.
\newblock On the uniqueness of the stationary probability matrix of transition
  probability tensors.
\newblock {\em Frontiers of Mathematics}, 18(6):1421--1445, 2023.

\end{thebibliography}

\end{document}